\newcommand{\mat}[1]{\ensuremath{\mathsf{#1}}}
\newcommand{\fnc}[1]{\ensuremath{\mathcal{#1}}}
\renewcommand{\d}[0]{\mathsf{d}}
\newcommand{\cell}[0]{\mathsf{c}}
\newcommand{\face}[0]{\mathsf{f}}
\newcommand{\sdim}[0]{\mathsf{d}}
\DeclareMathOperator{\diag}{\mathsf{diag}}
\DeclareMathOperator{\erfi}{\mathsf{erfi}}
\newcommand{\etal}[0]{{\em et~al.\@}\xspace}
\newcommand{\eg}[0]{{e.g.\@}\xspace}
\newcommand{\ie}[0]{{i.e.\@}\xspace}
\newtheorem{definition}{Definition}
\newtheorem{remark}{Remark}
\newtheorem{theorem}{Theorem}
\begin{document}

\title{Constructing stable, high-order finite-difference operators on point clouds over complex geometries}

\author{Jason Hicken \and Ge Yan \and Sharanjeet Kaur}

\date{\today}

\maketitle

\begin{abstract}
High-order difference operators with the summation-by-parts (SBP) property can be used to build stable discretizations of hyperbolic conservation laws; however, most high-order SBP operators require a conforming, high-order mesh for the domain of interest.  To circumvent this requirement, we present an algorithm for building high-order, diagonal-norm, first-derivative SBP operators on point clouds over level-set geometries.  The algorithm is \emph{not} mesh-free, since it uses a Cartesian cut-cell mesh to define the sparsity pattern of the operators and to provide intermediate quadrature rules; however, the mesh is generated automatically and can be discarded once the SBP operators have been constructed.  Using this temporary mesh, we construct local, cell-based SBP difference operators that are assembled into global SBP operators.  We identify conditions for the existence of a positive-definite diagonal mass matrix, and we compute the diagonal norm by solving a sparse system of linear inequalities using an interior-point algorithm.  We also describe an artificial dissipation operator that complements the first-derivative operators when solving hyperbolic problems, although the dissipation is not required for stability.  The numerical results confirm the conditions under which a diagonal norm exists and study the distribution of the norm's entries.  In addition, the results verify the accuracy and stability of the point-cloud SBP operators using the linear advection equation.
\end{abstract}

\section{Introduction}\label{sec:intro}

Scientists and engineers frequently rely on the simulation of fluid flows around or inside complicated geometries; for example, simulations are used to predict the drag on aircraft, the energy output of fusion reactors, and the blood flow within arteries.  The impact that a flow simulation can have in a particular application is limited by the time it takes to set up and run the simulation.  This motivates the development of discretization methods that can rapidly pre-process complex geometries while also being efficient during run time.

One way to address run-time efficiency is to use a high-order discretization.   Indeed, the efficiency of high-order discretizations for solving hyperbolic conservation laws has been known for over half a century~\cite{Kreiss1972comparison,Swartz1974relative}.  However, the widespread adoption of high-order methods has been delayed, in part, by lack of robustness.  To address this, entropy-stable discretizations~\cite{Tadmor2003entropy,Ismail2009affordable, Fisher2013high, Carpenter2014entropy, Crean2018entropy,Chan2018discretely} based on summation-by-parts (SBP) operators~\cite{kreiss:1974, Gassner2013skew, Fernandez2014generalized, Hicken2016multi,Glaubitz2023fsbp} have emerged as a promising framework for guaranteeing stable and, therefore, robust high-order methods.

High-order SBP discretizations have the potential to accelerate the run time of flow simulations, but they do not necessarily reduce the significant pre-processing time that can limit the impact of simulations.  Like most high-order methods, SBP discretizations typically require a high-order, conforming grid, and generating such a mesh for a complicated geometry can require significant effort.  The challenges with high-order meshing have inspired the development of cut-cell SBP methods~\cite{Taylor2024energy}.

The present contribution aims to accelerate both the run and pre-processing time of high-order SBP discretizations.  Specifically, we present an algorithm to construct high-order, sparse SBP operators whose nodal degrees of freedom are defined on point clouds.  The combination of these two elements --- high-order SBP and point-cloud nodes --- yields an accurate and stable discretization with considerable flexibility.

\subsection{Related work}

Our work is closely related to the conservative mesh-free scheme of Chui \etal~\cite{Chiu2012conservative}, and the discontinuous Galerkin difference (DGD)~\cite{Hagstrom2019discontinuous} extensions presented in \cite{Kaur2023high} and \cite{Yan2022thesis}.  This section briefly reviews these works to help relate and distinguish them from the present contribution.

In \cite{Chiu2012conservative}, the authors present an algorithm to construct a high-order finite-difference method on point clouds.  While they do not use the term summation-by-parts, their sparse-matrix operators are closely related to multidimensional SBP operators~\cite{Hicken2016multi}.  The authors present two algorithms to solve for their sparse operators: a segregated method that solves for the diagonal mass matrix first and then solves for the coefficients of the stiffness matrices; and a coupled method that solves for the mass matrix and stiffness matrices simultaneously.  The coupled method has recently been extended to general function spaces by Glaubitz \etal~\cite{Glaubitz2024optimization}.  In addition, Kwan and Chan~\cite{Kwan2024robust} have proposed a simpler construction for a first-order variant of \cite{Chiu2012conservative}.

Unlike \cite{Chiu2012conservative}, our construction algorithm is not mesh free; however, because we assemble the global stiffness matrices from cell-based operators, our algorithm is likely much faster.  Like the segregated method in \cite{Chiu2012conservative}, we need to solve a global problem for the diagonal mass matrix.  This involves solving a sparse linear inequality, for which we can leverage mature and efficient algorithms for linear optimization.

The results presented here include up to fifth-order design accuracy, which also distinguishes our work from that in \cite{Chiu2012conservative}.  While the theory in \cite{Chiu2012conservative} considered operators of arbitrary polynomial degree, the numerical results were limited to second-order discretizations.

Finally, the present work also builds upon Yan's generalized DGD framework~\cite{Yan2022thesis} and Kaur and Hicken's cut-cell DGD method~\cite{Kaur2023high}.  Like the DGD methods, we use cells to define stencils that in turn determine the accuracy and sparsity of the global operators.  However, the DGD methods produce a non-diagonal (but sparse) mass matrix, which complicates the use of explicit time-marching schemes and the implementation of entropy stability~\cite{Yan2023entropy}.  By contrast, the algorithm presented here yields a diagonal mass matrix, so existing entropy-stable SBP theory can be applied directly.

\subsection{Paper outline}

The bulk of the paper is devoted to describing our construction algorithm.  Section~\ref{sec:background} covers preliminary material and notation that we rely on in subsequent sections.  In particular, Section~\ref{sec:background} introduces the concept of a degenerate SBP pair, in which the diagonal mass matrix is not necessarily positive definite.  The construction of degenerate SBP pairs is the subject of Section~\ref{sec:construct}, which begins with the construction of cell-based operators and then covers the assembly process for global operators.  Section~\ref{sec:optimize} explains how we convert a degenerate SBP pair into a diagonal-norm SBP operator by solving a linear inequality.  The operators produced by the construction algorithm are studied and verified in Section~\ref{sec:results}.  Section~\ref{sec:conclude} concludes the paper with a summary and a discussion of potential future work.

\section{Preliminaries}\label{sec:background}

\subsection{Domain, background mesh, and nodes}

We are interested in discretizing initial boundary-value problems that are posed on some bounded domain $\Omega \subset \mathbb{R}^{\sdim}$, where $\sdim$ is the spatial dimension.  The boundary of $\Omega$ is denoted $\Gamma$, which may be partitioned into non-overlapping subsets, $\Gamma_i \subset \Gamma$, where distinct boundary conditions can be applied.  An example domain --- a unit box with a circle of radius $\frac{1}{4}$ excised from its center --- is illustrated in Figure~\ref{fig:ex_domain}.

We use level-set functions to define some boundaries. For example, boundary $\Gamma_5$ in Figure~\ref{fig:ex_domain} can be defined as $\{ \bm{x} \in \Omega \;|\; \phi(\bm{x}) = 0 \}$, where the level-set function in this case is $\phi(\bm{x}) =  (x - \frac{1}{2})^2 + (y - \frac{1}{2})^2 - (\frac{1}{4})^2$.

Our construction algorithm relies on a background mesh that consists of $N_{C}$ non-overlapping cells, $\{ \Omega^{\cell} \}_{\cell=1}^{N_{C}}$, where $\Omega^{\cell} \subset \Omega$ denotes the domain of cell $\cell \in \{1,2,\ldots,N_{C}\}$.  Figure~\ref{fig:ex_mesh} illustrates a possible background mesh for the domain in Figure~\ref{fig:ex_domain}.

We also need notation for the faces that make up the cell boundaries.  Let $\{ \Gamma^{\face} \}_{\face=1}^{N_{F}}$ denote the $N_{F}$ unique faces in the mesh.  The subset of faces that coincide with the boundary of cell $\cell$ will be denoted $F^{\cell} \subset \{1,2,\ldots,N_{F}\}$.  In addition, we will use $F_I \subset \{1,2,\ldots,N_{F} \}$ for the subset of all interfaces between cells, and $F_B \subset \{1,2,\ldots,N_{F}\}$ to denote the subset of faces that coincide with (or approximate) the boundary $\Gamma$. In addition to Figure~\ref{fig:ex_mesh}, the various face types are illustrated in Figure~\ref{fig:stencil}.

Nodal SBP operators act on discrete solutions that are defined on a set of points or nodes.  The SBP operators that we construct for the global domain $\Omega$ use $N$ nodes, and these nodes are collected in the set $X \equiv \{ \bm{x}_i \}_{i=1}^{N}$.  Figure~\ref{fig:ex_points} depicts a set of nodes that could be used for an SBP operator on the domain in Figure~\ref{fig:ex_domain}.

Our algorithm also requires SBP operators constructed on the cells.  These cell-based operators will use a subset of the global nodes, which we will refer to as the cell's stencil.  The stencil index set for cell $\cell$ will be represented as
\begin{equation*}
  \mathsf{nodes}(\cell) \equiv \{ \nu_1, \nu_2, \ldots, \nu_{N^{\cell}} \} \subset \{1,2,\ldots, N\},
\end{equation*}
where $N^{\cell}$ is the number of nodes in the stencil.  The node coordinates for the stencil of cell $\cell$ will be denoted $X^{\cell} \equiv \{ \bm{x}_i \in X \;|\; i \in \mathsf{nodes}(\cell) \}$.

\begin{figure}[tbp]
  \begin{subfigure}[t]{0.32\textwidth}
    \centering
    \includegraphics[width=\textwidth]{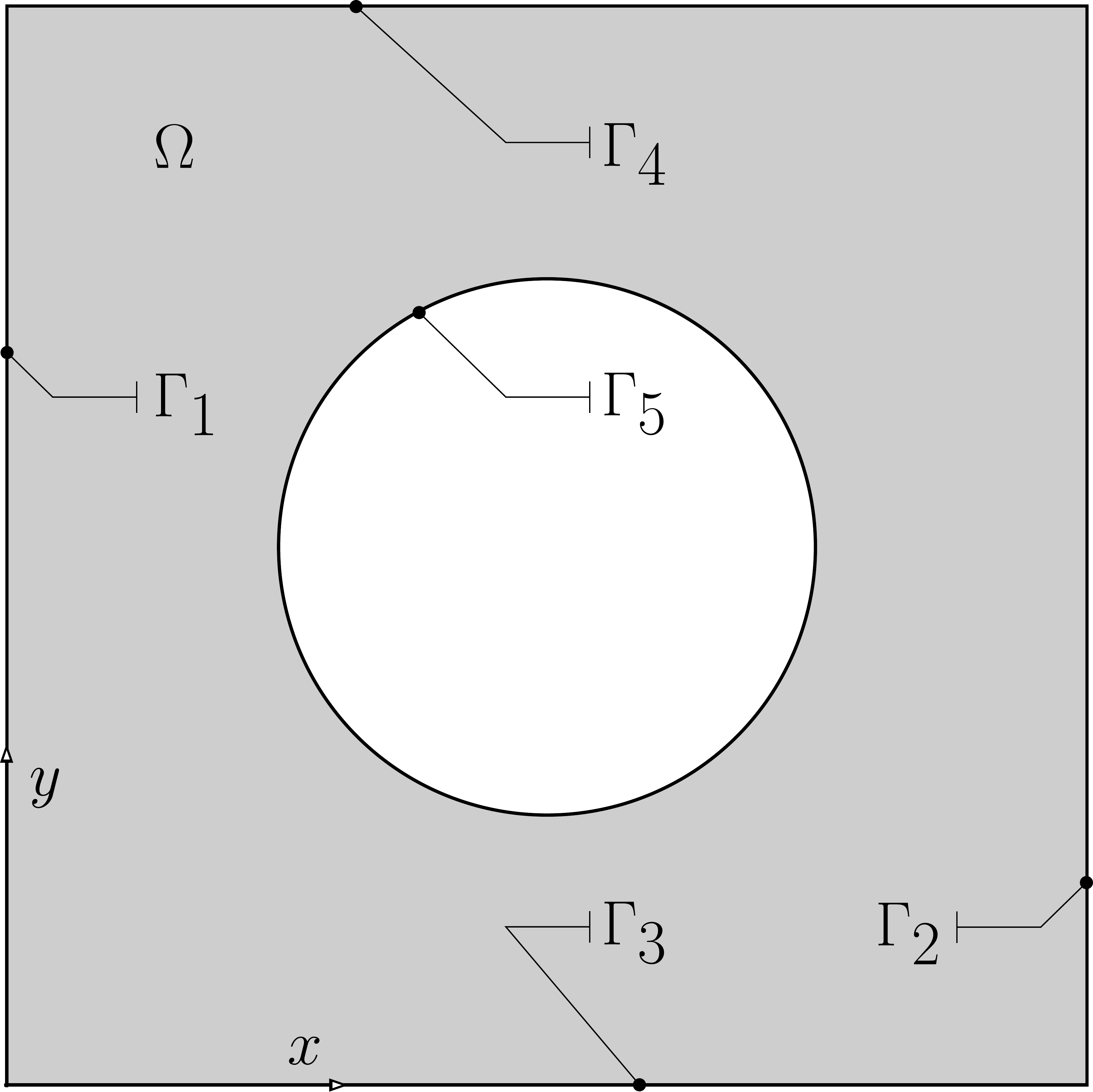}
    \caption{\small Example domain.} \label{fig:ex_domain}
  \end{subfigure}%
  \hfill%
  \begin{subfigure}[t]{0.32\textwidth}
    \centering
    \includegraphics[width=\textwidth]{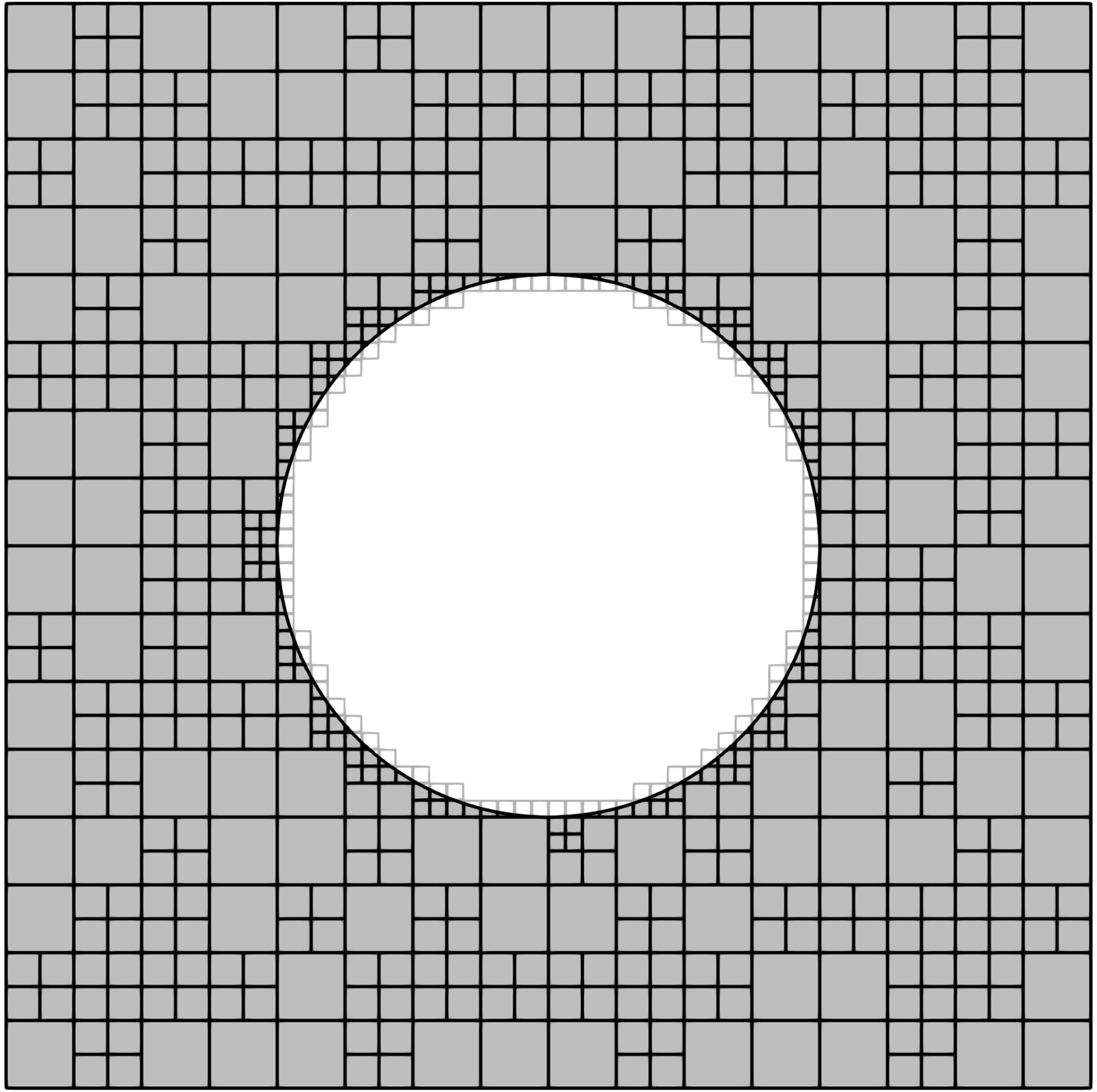}
    \caption{\small Example mesh.} \label{fig:ex_mesh}
  \end{subfigure}%
  \hfill%
  \begin{subfigure}[t]{0.32\textwidth}
    \centering
    \includegraphics[width=\textwidth]{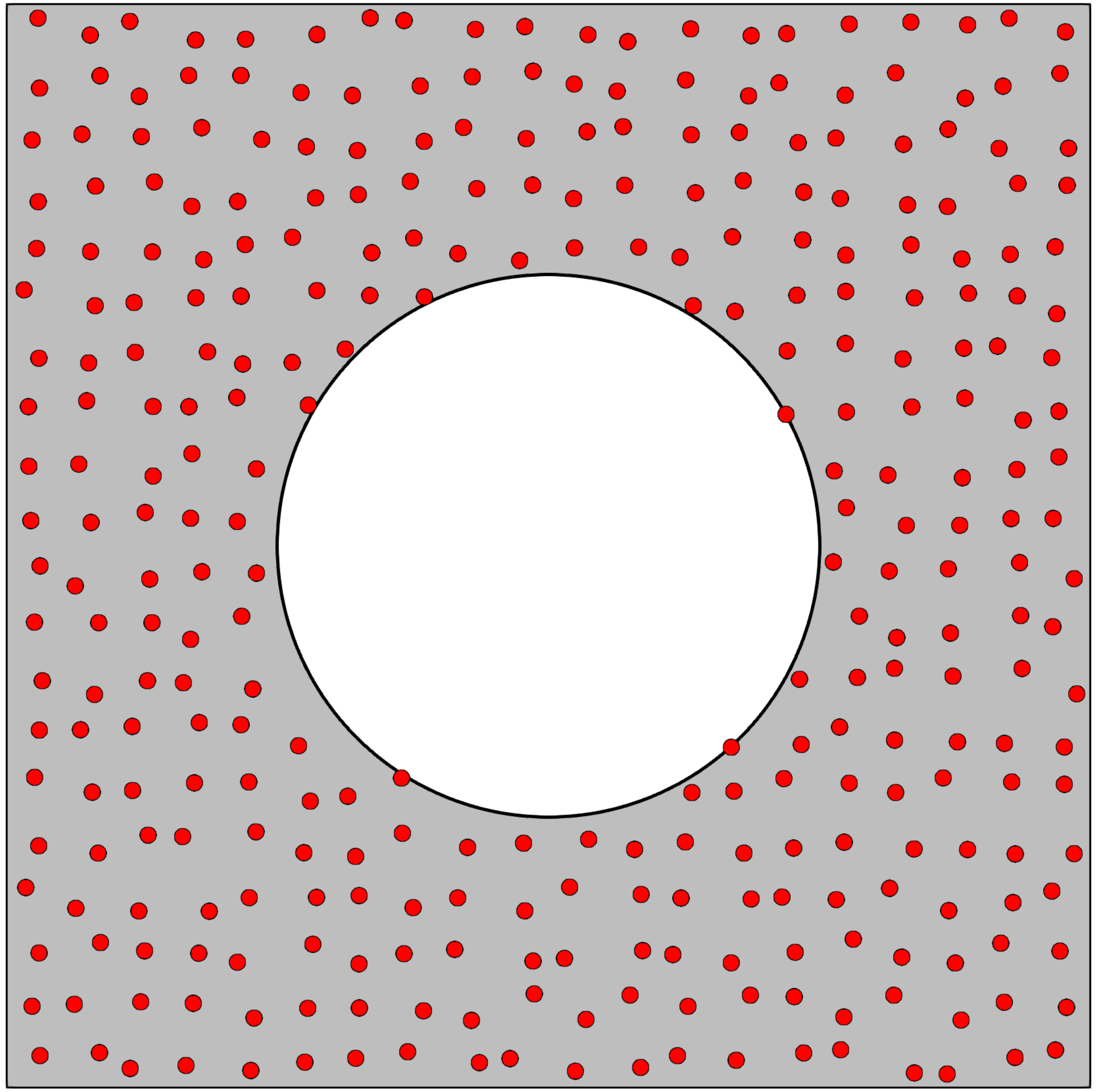}
    \caption{\small Example nodes.} \label{fig:ex_points}
  \end{subfigure}%
  \caption{\small Example of a domain (Fig.~\subref{fig:ex_domain}) and corresponding background mesh (Fig.~\subref{fig:ex_mesh}) and node set (Fig.~\subref{fig:ex_points}).} \label{fig:domain_mesh_nodes}
\end{figure}

\subsection{Polynomial spaces and accuracy}

The accuracy of various operators will be defined with respect to total degree polynomials. While the accuracy of SBP operators can be defined using other sets of functions~\cite{Glaubitz2023fsbp, Yan2022thesis}, we leave such an extension to future work.  We will use $\mathbb{P}^{p}(\Omega)$ to indicate the space of total degree $p$ polynomials over an arbitrary domain $\Omega$.

We need a basis for $\mathbb{P}^{p}(\Omega^{\cell})$ in order to define cell-based SBP operators.  We will use the notation $\{ \fnc{V}_{j} \}_{j=1}^{N_{p,d}}$ when it is necessary to refer to the basis explicitly, where $N_{p,\sdim} = {\scriptstyle \binom{p + \sdim}{\sdim}}$ is the dimension of the total degree $p$ basis in $\sdim$ dimensions.  In practice, we adopt the Proriol orthogonal basis~\cite{Proriol1957famille} for two-dimensional problems and their generalization for three-dimensional problems --- see, \eg, \cite[Chap.~10]{Hesthaven2008nodal} --- although other choices are certainly possible.

It is often necessary to evaluate the polynomial basis at some set of nodes.  The Vandermonde matrix is the matrix $\mat{V} \in \mathbb{R}^{N\times N_{p,\sdim}}$ whose rows correspond to the nodes in $X$ and whose columns correspond to the basis vectors:
\begin{equation*}
  \big[ \mat{V} \big]_{ij} = \fnc{V}_{j}(\bm{x}_{i}), \qquad \forall\, i \in \{1,2,\ldots,N\}, \quad \forall\, j \in \{1,2,\ldots, N_{p,\sdim}\}.
\end{equation*}
Similarly, $\mat{V}^{\cell}$ denotes the Vandermonde matrix of polynomials evaluated at the nodes $X^{\cell}$ of cell $\cell$.  In practice, we evaluate the Vandermonde matrix by first translating and scaling the nodes so that they lie within the reference domain where the basis is orthogonal, \ie, the reference triangle $\{ x,y > -1, x+y < 0\}$, in two dimensions, and the reference tetrahedron $\{ x,y,z>-1, x+y+z < -1\}$, in three dimensions.  Transforming the nodes to lie within the reference domain tends to improve the conditioning of the Vandermonde matrix.

We use $\mat{V}_x \in \mathbb{R}^{N \times N_{p,d}}$ to denote the matrix holding the basis' derivatives with respect to $x$, whose entries are given by 
\begin{equation*}
  \big[ \mat{V}_x \big]_{ij} = \frac{\partial \fnc{V}_{j}}{\partial x}(\bm{x}_i), \qquad \forall\, i \in \{1,2,\ldots,N\}, \quad \forall\, j \in \{1,2,\ldots, N_{p,d}\}.
\end{equation*}
Again, $\mat{V}_x^{\cell}$ is the analogous matrix for the cell's nodes $X^{\cell}$.

\subsection{First-derivative summation-by-parts operators}
 
While our goal is to construct diagonal-norm, first-derivative SBP operators, we need a broader definition of these operators than is typically used in the literature.  Specifically, we introduce the concept of a degenerate SBP pair, which relaxes the requirement for a positive-definite mass, or norm, matrix.

\begin{definition}[Degenerate SBP pair]\label{def:degen_sbp}
  The matrices $\mat{M} \in \mathbb{R}^{N\times N}$ and $\mat{Q}_x \in \mathbb{R}^{N\times N}$ form a degree $p$, degenerate summation-by-parts pair for the derivative $\partial/\partial x$ at the nodes $X = \{ \bm{x}_i \}_{i=1}^{N}$, if they satisfy the following conditions.
  \begin{enumerate}
    \item For degree $p$ polynomials, the operators satisfy the accuracy constraints
    \begin{equation}\label{eq:sbp_accuracy}
      \mat{Q}_x \mat{V} = \mat{M} \mat{V}_x.
    \end{equation}
    \item The matrix $\mat{M}$ is diagonal and $\mat{Q}_x$ has the factorization\footnote{All square matrices have such a factorization, but it is necessary to identify $\mat{E}_x$ for the third condition.}
    \begin{equation}\label{eq:sbp_factor}
      \mat{Q}_x = \mat{S}_x + {\textstyle \frac{1}{2}} \mat{E}_x,
    \end{equation}
    where $\mat{S}_x$ is skew symmetric, and $\mat{E}_x$ is symmetric.
    \item The symmetric matrix $\mat{E}_x$ satisfies 
    \begin{equation}\label{eq:sbp_boundary}
      \big[ \mat{V}^T \mat{E}_x \mat{V} \big]_{ij} = \int_{\Gamma} \fnc{V}_i \fnc{V}_j \, n_x\, \d \Gamma,
    \end{equation}
    where $n_x$ is the $x$ component of the outward unit normal vector on $\Gamma$.
  \end{enumerate}
\end{definition}

The above definition reframes first-derivative SBP operators in terms of $\mat{M}$ and $\mat{Q}_x$ rather than the difference operator $\mat{D}_x$; this allows us to consider operators with indefinite, or even singular, mass matrices $\mat{M}$. Otherwise, the degenerate SBP definition is essentially the same as the (diagonal-norm) multidimensional SBP definition introduced in~\cite{Hicken2016multi}.  Indeed, we can immediately relate diagonal-norm SBP operators to degenerate SBP pairs.

\begin{definition}[Diagonal-Norm SBP Operator]
  The matrix $\mat{D}_x \equiv \mat{M}^{-1} \mat{Q}_x \in \mathbb{R}^{N\times N}$ is a degree $p$, diagonal-norm summation-by-parts operator if $\mat{M}$ and $\mat{Q}_x$ form a degree $p$, degenerate SBP pair and $\mat{M}$ is positive definite.
\end{definition}

The similarity between degenerate and diagonal-norm SBP operators allows us to adapt several theoretical results from the latter to the former.

\begin{theorem}[SBP quadrature] Let $\mat{M}$ and $\mat{Q}_x$ form a degree $p$, degenerate SBP pair on the nodes $X$.  Then the diagonal entries in $\mat{M}$ are quadrature weights for a degree $2p-1$ exact quadrature rule at the nodes $X$.\label{thm:quad}
\end{theorem}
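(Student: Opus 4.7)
The plan is to combine the three defining properties of a degenerate SBP pair with the divergence theorem to extract a discrete analogue of integration by parts, and then argue that this identity is rich enough to pin down the quadrature on all of $\mathbb{P}^{2p-1}(\Omega)$. First, I would pick arbitrary $u,v \in \mathbb{P}^p(\Omega)$ with basis coefficient vectors $\bm{c}_u,\bm{c}_v \in \mathbb{R}^{N_{p,\sdim}}$, so that the nodal vectors are $\bm{u}=\mat{V}\bm{c}_u$ and $\bm{v}=\mat{V}\bm{c}_v$, while the nodal values of the $x$-derivatives are $\bm{u}_x = \mat{V}_x\bm{c}_u$ and $\bm{v}_x = \mat{V}_x\bm{c}_v$. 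The accuracy condition~\eqref{eq:sbp_accuracy} then gives $\mat{Q}_x\bm{v} = \mat{M}\bm{v}_x$ and likewise for $u$. Using the factorization~\eqref{eq:sbp_factor} together with the skew-symmetry of $\mat{S}_x$, one has $\mat{Q}_x + \mat{Q}_x^T = \mat{E}_x$, so
\begin{equation*}
  \bm{u}^T \mat{M} \bm{v}_x + \bm{v}^T \mat{M} \bm{u}_x \;=\; \bm{u}^T \mat{Q}_x \bm{v} + \bm{v}^T \mat{Q}_x \bm{u} \;=\; \bm{u}^T \mat{E}_x \bm{v} \;=\; \bm{c}_u^T \mat{V}^T \mat{E}_x \mat{V} \bm{c}_v.
\end{equation*}
The boundary property~\eqref{eq:sbp_boundary} rewrites the right-hand side as $\int_\Gamma uv \, n_x \, \d\Gamma$, while the diagonality of $\mat{M}$ groups the left-hand side as $\sum_i [\mat{M}]_{ii}\, \partial_x(uv)(\bm{x}_i)$. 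Applying the divergence theorem to the boundary integral then yields
\begin{equation*}
  \sum_{i=1}^N [\mat{M}]_{ii}\, \partial_x(uv)(\bm{x}_i) \;=\; \int_\Omega \partial_x(uv)\,\d\Omega \qquad \forall\, u,v \in \mathbb{P}^p(\Omega).
\end{equation*}

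It remains to show that the set $\{\partial_x(uv) : u,v\in\mathbb{P}^p(\Omega)\}$ spans $\mathbb{P}^{2p-1}(\Omega)$. Every monomial $\bm{x}^\alpha$ with $|\alpha|\le 2p-1$ is a nonzero scalar multiple of $\partial_x \bm{x}^\beta$, where $\beta$ differs from $\alpha$ only by a $+1$ in the $x$-exponent, so that $|\beta|=|\alpha|+1\le 2p$. Such a $\bm{x}^\beta$ always factors as $\bm{x}^\gamma \bm{x}^\delta$ with $|\gamma|,|\delta|\le p$: I would distribute the exponents of $\beta$ into $\gamma$ one coordinate at a time until $|\gamma|=\min(|\beta|,p)$ and assign the remainder to $\delta$. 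Taking $u=\bm{x}^\gamma$ and $v=\bm{x}^\delta$ then exhibits $\bm{x}^\alpha$ as an element of the span, completing the proof.

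The main technical step is the final spanning argument; the earlier manipulations are essentially a repackaging of the three defining properties together with the divergence theorem. It is also worth emphasizing that positive definiteness of $\mat{M}$ is never invoked, which is consistent with the theorem being stated for degenerate SBP pairs --- the diagonal entries of $\mat{M}$ furnish exact quadrature weights on $\mathbb{P}^{2p-1}(\Omega)$ even when some of those entries are negative or zero.
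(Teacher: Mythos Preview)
Your argument is correct and follows essentially the same route as the proof of Theorem~3.2 in~\cite{Hicken2016multi} that the paper defers to: contract the accuracy condition with basis (or arbitrary degree-$p$) polynomial vectors, add the transpose to expose $\mat{E}_x$, invoke the boundary property together with the divergence theorem, and finish by showing that $\{\partial_x(uv):u,v\in\mathbb{P}^p\}$ spans $\mathbb{P}^{2p-1}$. Your explicit monomial splitting for the spanning step is a clean way to make that last point rigorous.
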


\begin{theorem}[SBP existence] Consider a domain $\Omega$ and a node set $X = \{\bm{x}_{i} \}_{i=1}^{N}$.  Let $\mat{V} \in \mathbb{R}^{N \times N_{p,\sdim}}$ denote the Vandermonde matrix for a total degree $p$ polynomial basis evaluated at the nodes $X$ and assume that $N \geq N_{p,d}$.  If the Vandermonde matrix is full column rank, then there exists a degenerate SBP pair if and only if there is a quadrature rule for numerical integration over $\Omega$ that is defined on the nodes $X$ and is degree $2p-1$ exact.\label{thm:exist}
\end{theorem}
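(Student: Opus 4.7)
The plan is to prove both directions. The $(\Rightarrow)$ direction is immediate: if $(\mat{M}, \mat{Q}_x)$ is a degenerate SBP pair of degree $p$, Theorem~\ref{thm:quad} states that the diagonal entries of $\mat{M}$ provide a degree $2p-1$ exact quadrature on $X$. The substance is therefore in the $(\Leftarrow)$ direction, which I would prove by explicit construction in three steps.

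Step 1 (mass matrix and boundary operator). Given a degree $2p-1$ exact quadrature on $X$ with weights $\{w_i\}_{i=1}^N$, set $\mat{M} = \diag(w_1,\ldots,w_N)$. Let $\mat{B} \in \mathbb{R}^{N_{p,\sdim} \times N_{p,\sdim}}$ be the symmetric matrix with entries $[\mat{B}]_{ij} = \int_\Gamma \fnc{V}_i \fnc{V}_j\, n_x\, \d\Gamma$, and let $\mat{V}^{+} = (\mat{V}^T \mat{V})^{-1} \mat{V}^T$ be the Moore--Penrose pseudoinverse, which is well defined because $\mat{V}$ has full column rank. Define
\begin{equation*}
  \mat{E}_x = (\mat{V}^{+})^T \mat{B}\, \mat{V}^{+}.
\end{equation*}
Then $\mat{E}_x$ is symmetric, and since $\mat{V}^{+}\mat{V} = \mat{I}$, one has $\mat{V}^T \mat{E}_x \mat{V} = \mat{B}$, so condition~\eqref{eq:sbp_boundary} is satisfied.

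Step 2 (compatibility). Define $\mat{R} \equiv \mat{M}\mat{V}_x - \tfrac{1}{2}\mat{E}_x \mat{V}$, so that the factorization $\mat{Q}_x = \mat{S}_x + \tfrac{1}{2}\mat{E}_x$ satisfies the accuracy condition~\eqref{eq:sbp_accuracy} if and only if $\mat{S}_x \mat{V} = \mat{R}$. Because every entry of $\mat{V}^T \mat{M} \mat{V}_x$ evaluates a polynomial of total degree at most $2p-1$, the quadrature hypothesis together with the divergence theorem yield
\begin{equation*}
  \mat{V}^T \mat{M} \mat{V}_x + \mat{V}_x^T \mat{M} \mat{V} = \mat{B} = \mat{V}^T \mat{E}_x \mat{V},
\end{equation*}
which forces $\mat{V}^T \mat{R} = \tfrac{1}{2}(\mat{V}^T \mat{M} \mat{V}_x - \mat{V}_x^T \mat{M} \mat{V})$ to be skew-symmetric. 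This is precisely the necessary condition for $\mat{S}_x \mat{V} = \mat{R}$ to admit a skew-symmetric solution.

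Step 3 (skew factor). It remains to exhibit a skew-symmetric $\mat{S}_x$ with $\mat{S}_x \mat{V} = \mat{R}$. Take a thin QR factorization $\mat{V} = \mat{U}\mat{R}_0$ with $\mat{R}_0$ nonsingular, and extend $\mat{U}$ to an orthogonal $[\mat{U},\mat{U}_\perp]$. In this basis, seek $\mat{S}_x$ with a $2 \times 2$ block structure: the top-left block must equal $\mat{R}_0^{-T}(\mat{V}^T \mat{R})\mat{R}_0^{-1}$, which is skew by Step~2; the off-diagonal blocks are determined by $\mat{U}_\perp^T \mat{R}$ subject to $\mat{A}_{12} = -\mat{A}_{21}^T$; and the bottom-right block may be taken to be zero. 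The main obstacle in the write-up is this third step: one must verify carefully that the skewness of $\mat{V}^T \mat{R}$ is both necessary and sufficient for the block construction to produce a globally skew $\mat{S}_x$ satisfying $\mat{S}_x \mat{V} = \mat{R}$, and note that the resulting operator is generically non-unique whenever $N > N_{p,\sdim}$.
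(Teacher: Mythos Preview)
Your proof is correct and follows essentially the same constructive route that the paper invokes: the paper omits the proof and simply refers to Theorem~3.3 of \cite{Hicken2016multi}, noting that the argument carries over verbatim once the positivity requirement on the quadrature weights is dropped. Your three steps---taking $\mat{M}$ to be the diagonal of the given weights, building $\mat{E}_x$ via the pseudoinverse so that \eqref{eq:sbp_boundary} holds, and then producing a skew-symmetric $\mat{S}_x$ once the compatibility condition $\mat{V}^T\mat{R} = -\mat{R}^T\mat{V}$ is verified---are exactly that argument.

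The only stylistic difference worth noting is in Step~3: you construct $\mat{S}_x$ by a block decomposition in the orthogonal basis $[\mat{U},\mat{U}_\perp]$, whereas the present paper (in its separate Theorem~\ref{thm:skew_QR}) later gives the closed-form expression $\mat{S}_x = \mat{G}_x\mat{L}^{-T}\mat{U}^T - \mat{U}\mat{L}^{-1}\mat{G}_x^T + \mat{U}\mat{L}^{-1}\mat{G}_x^T\mat{U}\mat{U}^T$ for the same object. Both constructions are equivalent for the existence claim; the closed form avoids forming $\mat{U}_\perp$ and is preferred in the paper for computational reasons, while your block picture makes the non-uniqueness (freedom in the $\mat{U}_\perp^T\mat{S}_x\mat{U}_\perp$ block) more transparent.
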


The proofs of these results are omitted, because they are identical to the respective proofs of Theorems~3.2 and 3.3 from~\cite{Hicken2016multi}, except that diagonal-norm operators are replaced with degenerate SBP pairs and the requirement for positive quadrature weights is dropped.

\section{Construction algorithm}\label{sec:construct}

This section describes our algorithm to construct degenerate SBP pairs over a given domain and node set.  The high-level steps are summarized below.  The steps are similar to the algorithm presented by Yan~\cite{Yan2022thesis}, but here the resulting operators have diagonal $\mat{M}$ whereas the discontinuous Galerkin difference mass matrices in \cite{Yan2022thesis} are not diagonal.
\begin{enumerate}
\item Provide the node distribution, $X$, for the given domain $\Omega$ (\S~\ref{sec:nodes}).
\item Construct the mesh and define a stencil for each cell (\S~\ref{sec:quad_mesh}).
\item Build degenerate SBP pairs for each cell based on the stencils (\S~\ref{sec:cell_sbp}).
\item Assemble the cell-based matrices into global SBP operators (\S~\ref{sec:assemble}).
\end{enumerate}

\begin{remark} 
Steps 1 and 2 are closely related.  For example, the DGD discretizations in \cite{Yan2023entropy} and \cite{Kaur2023high} define the node locations to be the centroids of the cells, so the mesh must be constructed first.
\end{remark}

\subsection{Node distribution}\label{sec:nodes}

For the purposes of this work, we assume that the node set $X$ is provided by the user.  This does not mean the node distribution is inconsequential.  On the contrary, the node coordinates have a substantial impact on the SBP operator's accuracy and maximum time-step restriction, much like a mesh influences these factors in a conventional finite-element discretization.  The node coordinates also influence whether the diagonal mass matrix is positive definite, at least on coarse grids (see Section~\ref{sec:optimize}).  Thus, the node distribution is an important enough topic that it warrants its own investigation beyond this paper.  Until such an investigation is conducted, we use the ad hoc methods described in Section~\ref{sec:node_distribution} to generate node distributions.

\subsection{Background mesh}\label{sec:quad_mesh}

The construction algorithm uses a background mesh in a divide-and-conquer strategy: we build degenerate SBP operators on each cell of the mesh and then assemble these cell-based operators into a global SBP operator.  Once the global SBP operators are constructed, the mesh becomes extraneous and can be deleted to save memory.

Much like the node distribution, there are myriad ways to construct the background mesh.  We adopt an unstructured Cartesian cut-cell mesh with isotropic refinement, because it helps simplify and automate our algorithm on complex domains.  We acknowledge that isotropic adaptation is not well suited to anisotropic features~\cite{Aftosmis1995adaptation}, so this issue will need to be addressed in future work.

Our mesh implementation relies on the \texttt{RegionTrees.jl} Julia package~\cite{RegionTrees}.  To generate the Cartesian mesh, we start with a single quadrilateral in two dimensions (hexahedron in three dimensions).  The domain of the initial cell is given by 
\begin{equation*}
  \Omega_{\mathsf{init}} \equiv
\{ \bm{x} \in \mathbb{R}^{\sdim} \;|\; x_i \in [l_i, u_i],\; i=1,2,\ldots,\sdim \},
\end{equation*}
where the interval $[l_i,u_i]$ defines the extent of the domain in the coordinate direction $i$.  The mesh is then recursively refined by splitting cells into four (eight) equal-sized children until each child cell contains at most one node from the set $X$.  Subsequently, we recursively refine elements that are intersected by a level-set boundary --- if such a boundary is present --- until the cut cells are below some prescribed minimum cell size, $\Delta x_{i,\min}, i=1,2,\ldots,\sdim$.

\begin{remark}
  We limit each cell to one node because this condition is easy to enforce, but it is not an inherent requirement of the method and one can have multiple nodes per cell.
\end{remark}

\subsubsection{Quadrature rules}\label{sec:quad}

We assume that each cell of the mesh is equipped with a quadrature rule that is exact for polynomials in $\mathbb{P}^{2p-1}(\Omega^{\cell})$, where $p \geq 1$ is the polynomial exactness of the SBP operator.  Furthermore, we need positive quadrature rules for each face $\Gamma^{\face}$, $\face \in \{1,2,\ldots,N_{F}\}$, in order to construct the symmetric boundary operators, \eg, $\mat{E}_x^{\cell}$.  For a degree $p$ SBP operator, these surface quadrature rules must exactly integrate polynomials from $\mathbb{P}^{2p}(\Gamma^{\face})$, which is one degree higher than the volume rules. 

We use tensor-product Gauss-Legendre rules for numerical integration over line segments, quadrilaterals, and hexahedra; these geometries represent the vast majority of cells and faces in the mesh.  We use Saye's algorithm~\cite{Saye2022high} for the cells and faces that intersect the level-set $\phi(\bm{x}) = 0$.  The algorithm in \cite{Saye2022high} produces high-order, positive quadrature rules for hypercubes that are intersected by polynomial level-set functions.  When $\phi(\bm{x})$ is not a polynomial, the algorithm projects the level-set onto a high-order Bernstein polynomial approximation.

\subsubsection{Stencil construction}\label{sec:stencil}

The stencil for cell $\cell$ is determined by finding the $N^{\cell}$ nodes in $X$ that are ``closest'' to the centroid of the cell; the centroid of the \emph{uncut} quadrilateral or hexahedron is used for cells that intersect the level-set $\phi(\bm{x}) = 0$.  Refer to Figure~\ref{fig:stencil} for an example.  The notion of ``closeness'' depends on the metric used to measure distance.  In this work we use the Euclidean metric to measure distance. If the nodes are distributed anisotropically, \eg, to resolve boundary layers, a non-Euclidean metric may be necessary to ensure the stencil is not overly biased in a certain direction.

\begin{figure}[tbp]
  \begin{center}
    \includegraphics[width=0.65\textwidth]{./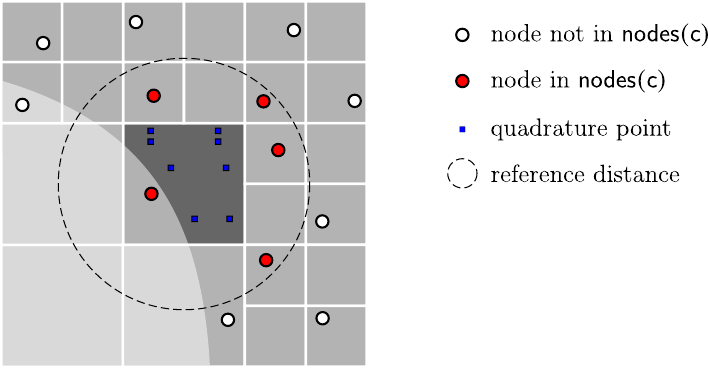}
    \caption[]{\small Illustration of a cell (dark gray) with its stencil (red circles).  The light gray squares and partial squares in the lower left indicate the immersed region where $\phi(\bm{x}) < 0$.  The dashed circle provides a reference length to verify that only the closest $N^{\cell} = 5$ nodes are included in the stencil.  The blue squares represent quadrature nodes for integration over the (cut) cell.}\label{fig:stencil}
  \end{center}
  \end{figure}

The number of nodes in the stencil strongly influences the conditioning of the Vandermonde matrix, $\mat{V}^{\cell}$, which then influences the properties of the degenerate SBP pair on cell $\cell$.  Therefore, similar to Kaur~\etal~\cite{Kaur2023high}, we add nodes to the stencil until the condition number of the degree $2p-1$ Vandermonde matrix --- denoted $\mathsf{cond}(\mat{V}_{2p-1}^{\cell})$ --- is below some threshold $\mathsf{tol}$ or a maximum number of iterations $n_{\max}$ is reached; see Algorithm~\ref{alg:stencil}.  To generate the results for Section~\ref{sec:results}, we set the maximum number of inner iterations to $n_{\max} = 4p-1$, and the condition-number threshold to $\mathsf{tol} = 5\times 10^{2p-1}$.  In our numerical experiments, the innermost loop of Algorithm~\ref{alg:stencil} always terminated before reaching $n_{\max}$; nevertheless, adding nodes is not guaranteed to reduce the condition number of the Vandermonde matrix, and some node sets and geometries may require larger $n_{\max}$ or even node repositioning.

A couple of points about Algorithm~\ref{alg:stencil} are worth highlighting. First, the polynomial degree used in the algorithm is $2p-1$ rather than $p$, because the stencil must be sufficiently large to construct a $2p-1$ exact quadrature rule; this will be discussed more in Section~\ref{sec:norm}.  Second, since the nested loop in Algorithm~\ref{alg:stencil} starts at $n=1$, the default stencil size is $N^{\cell} = N_{2p-1,\sdim} + 1$, which is one larger than the dimension of the total degree $2p-1$ polynomial basis.  This default stencil size is motivated by our approach to ensure a positive norm.  In short, the larger-than-necessary stencil introduces additional degrees of freedom in the cell-based quadrature that can be used to enforce positivity; see Section~\ref{sec:optimize}.

\begin{algorithm}[tbp]\DontPrintSemicolon
  \SetKwInOut{Input}{Input}\SetKwInOut{Output}{Output}
  \Input{nodes $X$, mesh cells $\{\Omega^{\cell}\}_{\cell=1}^{N_{C}}$, poly. degree $p$, $n_{\max}$, $\mathsf{tol}$.}
  \Output{Stencils, $\mathsf{nodes}(\cell)$, for all $\cell = 1,2,\ldots,N_{C}$}
  \For{each cell $\cell$ in the mesh}
  {
    \For{$n = 1,2,\ldots,n_{\max}$} 
    {
      set stencil size $N^{\cell} \gets N_{2p-1,\sdim} + n$\;
      $\mathsf{nodes}(\cell) \gets $ indices of the $N^{\cell}$ closest nodes to centroid of cell $\cell$\;
      \If{$\mathsf{cond}(\mat{V}_{2p-1}^{\cell}) < \mathsf{tol}$}
      {
        break 
      }
    }
  }
  \caption{{\small Cell stencil generation algorithm.}}\label{alg:stencil}
\end{algorithm}

\subsection{Cell-based SBP operators}\label{sec:cell_sbp}

Once the background mesh and stencils are available, we construct degenerate SBP pairs for each cell.  We follow the same sequence often used to construct multidimensional diagonal-norm SBP operators~\cite{Hicken2016multi,Fernandez2017simultaneous}:
\begin{enumerate}
\item Generate a quadrature rule at the nodes $X^{\cell}$ that exactly integrates degree $2p-1$ polynomials over $\Omega^{\cell}$.
\item Construct the symmetric boundary operators, \eg, $\mat{E}_x^{\cell}$, using quadrature rules over the faces of the cell, $\face \in F^{\cell}$.
\item Construct the skew-symmetric operators, \eg, $\mat{S}_x^{\cell}$.
\end{enumerate}
Sections~\ref{sec:norm}, \ref{sec:sym_part}, and \ref{sec:skew_part}, respectively, explain these three steps for the pair $(\mat{M}^{\cell}, \mat{Q}_x^{\cell})$; the steps are analogous for the other spatial directions.

\subsubsection{Cell-based norm/quadrature}\label{sec:norm}

The non-trivial entries in the diagonal mass matrix $\mat{M}^{\cell}$ are the weights of a $2p-1$ exact quadrature rule over $\Omega^{\cell}$ based on the cell's nodes $X^{\cell}$; see Theorems~\ref{thm:quad} and \ref{thm:exist}.  Thus, the diagonal entries in the mass matrix satisfy the following system of $N_{2p-1,\sdim}$ equations in $N^{\cell}$ unknowns:
\begin{equation}\label{eq:norm_conditions_scalar}
  \sum_{i = 1}^{N^{\cell}} \fnc{V}_{j}(\bm{x}_{i}) \big[ \mat{M}^{\cell} \big]_{ii} = \int_{\Omega^{\cell}} \fnc{V}_j(\bm{x}) \, d\Omega,\qquad \forall\, \fnc{V}_{j} \in \mathbb{P}^{2p-1}(\Omega^{\cell}),
\end{equation}
where $\bm{x}_i \in X^{\cell}$.  The integrals on the right-hand side are evaluated using the quadrature rules described in Section~\ref{sec:quad}.

The linear system \eqref{eq:norm_conditions_scalar} can be expressed in matrix form as
\begin{equation}\label{eq:norm_conditions}
  [\mat{V}_{2p-1}^{\cell}]^T \bm{m}^{\cell} = \bm{b}^{\cell},
\end{equation}
where the unknowns $\bm{m}^{\cell} \in \mathbb{R}^{N^{\cell}}$ are the diagonal entries in $\mat{M}^{\cell}$.  The subscript $2p-1$ on the Vandermonde matrix indicates that the basis for the quadrature conditions uses degree $2p-1$ polynomials and not merely degree $p$ polynomials.  The entries in $\bm{b}^{\cell}$ are the integrals on the right-hand side of \eqref{eq:norm_conditions_scalar}, ordered consistently with the columns in the Vandermonde matrix.

The system \eqref{eq:norm_conditions} has fewer equations than unknowns, because the 
stencil size satisfies $N^{\cell} > N_{2p-1,\sdim}$ (see Section~\ref{sec:stencil}).  Furthermore, the stencil was generated to ensure the Vandermonde matrix $\mat{V}_{2p-1}^{\cell}$ has a bounded condition number.  Thus, the general solution to \eqref{eq:norm_conditions} can be expressed as 
\begin{equation}\label{eq:cell_norm_sol}
  \bm{m}^{\cell} = \bm{m}_{\min}^{\cell} + \mat{Z}^{\cell} \bm{y}^{\cell},
\end{equation}
where $\bm{m}_{\min}^{\cell}$ is the minimum-norm solution, the columns of $\mat{Z}^{\cell}$ are a basis for the null-space of the transposed Vandermonde matrix, \ie, $[\mat{V}_{2p-1}^{\cell}]^T \mat{Z}^{\cell} = \mat{0}$, and $\bm{y}^{\cell} \in \mathbb{R}^{N^{\cell} - N_{2p-1,\sdim}}$ is an arbitrary vector.

For the remainder of Section~\ref{sec:construct}, we will adopt the minimum-norm solution for the cell's quadrature weights, $\bm{m}^{\cell} = \bm{m}_{\min}^{\cell}$.  However, we will revisit this choice in Section~\ref{sec:optimize}, where the additional degrees of freedom provided by the null-space will be critical to ensuring a positive-definite mass matrix.

\subsubsection{Cell-based symmetric part of $\mat{Q}_x^{\cell}$}\label{sec:sym_part}

The symmetric part of $\mat{Q}_x^{\cell}$ is constructed as described in \cite{Fernandez2017simultaneous}: 
\begin{equation}\label{eq:sym_part}
  \mat{E}_x^{\cell} = \sum_{\face \in F^{\cell}} \big[\mat{R}^{\face,\cell} \big]^T \; \mat{B}^{\face} \; \mat{N}_{x}^{\face,\cell} \; \mat{R}^{\face,\cell},
\end{equation}
where $\mat{R}^{\face,\cell}$ is an interpolation operator (defined below), the diagonal matrix $\mat{B}^{\face}$ holds the quadrature weights for face $\face$ along its diagonal, and the diagonal matrix $\mat{N}_{x}^{\face,\cell}$ holds the $x$ components of the outward-facing unit normal, $n_x$, at the quadrature nodes of face $\face$.

The matrix $\mat{R}^{\face,\cell}$ is constructed such that it can exactly interpolate degree $p$ polynomials from the cell's nodes $X^{\cell}$ to the quadrature points of face $\face \in F^{\cell}$.  Specifically, we define this interpolation operator to be the following minimum-norm solution to the interpolation conditions~\cite{Fernandez2017simultaneous}:
\begin{equation*}
  \mat{R}^{\face,\cell} = \mat{V}^{\face}\big[ \mat{V}^{\cell} \big]^{\dagger},
\end{equation*}
where $\mat{V}^{\face}$ is the matrix of polynomials evaluated at the quadrature nodes of face $\face$, and $\big[ \mat{V}^{\cell} \big]^{\dagger}$ denotes the Moore-Penrose pseudo inverse of the cell Vandermonde matrix.

Based on the definitions of $\mat{R}^{\face,\cell}$, $\mat{B}^{\face}$ and $\mat{N}_{x}^{\face,\cell}$, it follows from \cite[Theorem~1]{Fernandez2017simultaneous} that $\mat{E}_x^{\cell}$ satisfies Equation~\eqref{eq:sbp_boundary} in Definition~\ref{def:degen_sbp}.

\subsubsection{Cell-based skew-symmetric part of $\mat{Q}_x^{\cell}$}\label{sec:skew_part}

At this point, the cell mass matrices $\mat{M}^{\cell}$ and the boundary matrices $\mat{E}_x^{\cell}$ have been constructed to satisfy conditions two and three in Definition~\ref{def:degen_sbp}.  This leaves the skew-symmetric matrices $\mat{S}_x^{\cell}$ to satisfy the first condition, \ie, the accuracy constraints.  Substituting the factorization $\mat{Q}_x^{\cell} = \mat{S}_x^{\cell} + \frac{1}{2} \mat{E}_x^{\cell}$ into Equation~\eqref{eq:sbp_accuracy} and rearranging, we find that the accuracy constraints are linear in $\mat{S}_x^{\cell}$:
\begin{equation}\label{eq:skew_system}
  \mat{S}_x^{\cell} \mat{V}^{\cell} = \mat{M}^{\cell} \mat{V}_x^{\cell} - {\textstyle \frac{1}{2}} \mat{E}_x^{\cell} \mat{V}^{\cell}.
\end{equation}

Solving the linear system \eqref{eq:skew_system} directly to find $\mat{S}_x^{\cell}$ can be costly, especially in three dimensions.  Table~\ref{tab:prob_size} lists the size of the linear system for polynomial degrees one through four as a function of problem dimension $\sdim$.  The system size is for a cell with the smallest possible stencil based on Algorithm~\ref{alg:stencil}.  The number of entries in the system matrix (last column) grows rapidly with $p$; indeed, one can show it grows as $p^{4\sdim}$.  Granted, sparsity in the linear system can be exploited, but avoiding the construction and solution of large sparse systems \emph{for each cell} would be preferable.

\begin{remark}
In~\cite{Hicken2016multi} and \cite{Fernandez2017simultaneous}, Equation~\eqref{eq:skew_system} was formed explicitly and solved in a minimum-norm sense.  The cost was justified in \cite{Hicken2016multi} and \cite{Fernandez2017simultaneous} because the authors constructed SBP operators on reference elements, which could be precomputed and stored.  Here we need to solve~\eqref{eq:skew_system} on each cell.
\end{remark}

\begin{remark}
Examining Table~\ref{tab:prob_size} suggests that some of the linear systems are over determined; however, the so-called compatibility conditions --- see, \eg, \cite[Lemma 3.1]{Hicken2016multi} --- are embedded in these accuracy conditions.  For example, for $p=1$, there are 6 redundant equations in two dimensions, and 10 redundant equations in three dimensions.
\end{remark}

\begin{table}
\begin{center}
  \caption{\small Number of unknowns and equations in the linear system \eqref{eq:skew_system} as a function of spatial dimension $\sdim$ and operator degree $p$.  The column titled \textbf{unknowns} refers to the number of unique entries in $\mat{S}_x^{\cell}$, the column \textbf{equations} is the number of equations, and \textbf{matrix size} indicates the number of entries in system matrix.}\label{tab:prob_size}
  \begin{tabular}{rrrrrr}
    & & & \multicolumn{3}{c}{\textbf{Dimensions of Eq.~\eqref{eq:skew_system}}} \\\cline{4-6}
    \rule{0ex}{3ex}$\quad \sdim$ & $\quad p$ & $\quad N^{\cell}$ & \textbf{unknowns} & \textbf{equations} & \textbf{matrix size} \\\hline 
\rule{0ex}{3ex}2 & 1 & 4 & 6 & 12 & 72 \\
                 & 2 & 11 & 55 & 66 & 3630 \\
                 & 3 & 22 & 231 & 220 & 50820 \\
                 & 4 & 37 & 666 & 555 & 369630 \\\hline
\rule{0ex}{3ex}3 & 1 & 5 & 10 & 20 & 200 \\
                 & 2 & 21 & 210 & 210 & 44100 \\
                 & 3 & 57 & 1596 & 1140 & 1819440 \\
                 & 4 & 121 & 7260 & 4235 & 30746100 \\\hline
  \end{tabular}
\end{center}
\end{table}

The theorem below shows that we can find an $\mat{S}_x^{\cell}$ that satisfies Equation~\eqref{eq:skew_system} by computing a $QR$ factorization of the Vandermonde matrix.  The Vandermonde matrix has dimensions $N^{\cell} \times N_{p,\sdim}$, which grows at a more reasonable rate\footnote{The number of entries in the Vandermonde matrix is equal to the number of equations in \eqref{eq:skew_system}, which is the second-last column in Table~\ref{tab:prob_size}.} proportional to $p^{2\sdim}$.  For generality, the theorem is stated in terms of a degenerate SBP pair defined on a generic domain $\Omega$ and node set $X$; however, in practice, we apply this theorem to the cell's domain $\Omega^{\cell}$ and node set $X^{\cell}$.  Note that, to avoid confusion with previously introduced matrices, the theorem uses $\mat{U} \in \mathbb{R}^{N^{\cell} \times N_{p,\sdim}}$ and $\mat{L}^{T} \in \mathbb{R}^{N_{p,\sdim} \times N_{p,\sdim}}$ to denote the orthogonal and upper triangular matrices, respectively, in the $QR$ factorization.

\begin{theorem}\label{thm:skew_QR}
  Let $\mat{M} \in \mathbb{R}^{N\times N}$ be a diagonal matrix whose diagonal entries correspond to a $2p-1$ exact quadrature rule over the nodes $X = \{ \bm{x}_{i} \}_{i=1}^{N}$ for the domain $\Omega$.  Furthermore, let $\mat{E}_x \in \mathbb{R}^{N\times N}$ be a symmetric matrix that satisfies Equation~\eqref{eq:sbp_boundary}.  Define 
  \begin{equation*}
    \mat{G}_x \equiv \mat{M} \mat{V}_x - {\textstyle \frac{1}{2}} \mat{E}_x \mat{V},
  \end{equation*}
  where $\mat{V} \in \mathbb{R}^{N \times N_{p,\sdim}}$ and $\mat{V}_x \in \mathbb{R}^{N \times N_{p,\sdim}}$ denote the total degree $p$ basis and its derivatives evaluated at the nodes $X$, respectively.  If the Vandermonde matrix has full column rank and has the $QR$ decomposition $\mat{V} = \mat{U} \mat{L}^T$, then
  \begin{equation}\label{eq:skew_part}
    \mat{S}_x \equiv \mat{G}_x \mat{L}^{-T} \mat{U}^T - \mat{U} \mat{L}^{-1} \mat{G}_x^{T} + \mat{U} \mat{L}^{-1} \mat{G}_x^T \mat{U} \mat{U}^T,
  \end{equation}
  is a skew-symmetric matrix that satisfies Equation~\eqref{eq:sbp_accuracy}, \ie $\big( \mat{S}_x + \frac{1}{2} \mat{E}_x \big) \mat{V} = \mat{M} \mat{V}_x$.
\end{theorem}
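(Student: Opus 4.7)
The plan is to establish one key identity up front, and then verify skew-symmetry and accuracy as essentially separate algebraic checks. The key identity is that $\mat{V}^T \mat{G}_x$ is skew-symmetric. To see this, I would compute $[\mat{V}^T \mat{M} \mat{V}_x]_{ij} = \sum_k \fnc{V}_i(\bm{x}_k)[\mat{M}]_{kk}(\partial \fnc{V}_j/\partial x)(\bm{x}_k)$ and observe that since $\fnc{V}_i \, \partial_x \fnc{V}_j \in \mathbb{P}^{2p-1}(\Omega)$, the $2p-1$ exactness of $\mat{M}$ gives $[\mat{V}^T \mat{M} \mat{V}_x]_{ij} = \int_\Omega \fnc{V}_i \, \partial_x \fnc{V}_j \, \d\Omega$. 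Applying integration by parts and combining with Equation~\eqref{eq:sbp_boundary} yields
\begin{equation*}
\mat{V}^T \mat{M} \mat{V}_x + \mat{V}_x^T \mat{M} \mat{V} = \mat{V}^T \mat{E}_x \mat{V},
\end{equation*}
so that $\mat{V}^T \mat{G}_x = \tfrac{1}{2}(\mat{V}^T \mat{M} \mat{V}_x - \mat{V}_x^T \mat{M} \mat{V})$ is skew-symmetric.

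Next I would verify the accuracy condition. Using $\mat{V} = \mat{U}\mat{L}^T$ together with the QR-orthogonality $\mat{U}^T \mat{U} = \mat{I}$, a direct substitution gives
\begin{equation*}
\mat{S}_x \mat{V} = \mat{G}_x \mat{L}^{-T} \mat{U}^T \mat{U} \mat{L}^T - \mat{U}\mat{L}^{-1}\mat{G}_x^T \mat{U}\mat{L}^T + \mat{U}\mat{L}^{-1}\mat{G}_x^T \mat{U} \mat{U}^T \mat{U}\mat{L}^T = \mat{G}_x,
\end{equation*}
because the last two terms are identical and cancel. Adding $\tfrac{1}{2}\mat{E}_x \mat{V}$ recovers $\mat{M}\mat{V}_x$, which is Equation~\eqref{eq:sbp_accuracy}.

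For skew-symmetry, I would transpose the definition \eqref{eq:skew_part} and add it to $\mat{S}_x$. The first two terms of $\mat{S}_x$ cancel the corresponding terms of $\mat{S}_x^T$, leaving
\begin{equation*}
\mat{S}_x + \mat{S}_x^T = \mat{U}\bigl[\mat{L}^{-1}\mat{G}_x^T \mat{U} + \mat{U}^T \mat{G}_x \mat{L}^{-T}\bigr]\mat{U}^T.
\end{equation*}
The bracketed factor is precisely $\mat{L}^{-1}(\mat{V}^T \mat{G}_x + \mat{G}_x^T \mat{V})\mat{L}^{-T}$ after inserting $\mat{V} = \mat{U}\mat{L}^T$, which vanishes by the key identity. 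Hence $\mat{S}_x + \mat{S}_x^T = \mat{0}$.

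The only conceptually subtle step is the key identity, since it is what ties together all three conditions in Definition~\ref{def:degen_sbp} (quadrature exactness of $\mat{M}$, the boundary characterization of $\mat{E}_x$, and the definition of $\mat{G}_x$); the role of the third, rank-one-style correction term $\mat{U}\mat{L}^{-1}\mat{G}_x^T \mat{U}\mat{U}^T$ is exactly to absorb the residual left over when the symmetric part $\mat{V}^T \mat{G}_x + \mat{G}_x^T \mat{V}$ is projected off, so the algebra would not close without it. The remaining work is bookkeeping with $\mat{U}^T \mat{U} = \mat{I}$, which requires $\mat{V}$ to have full column rank so that $\mat{L}$ is invertible.
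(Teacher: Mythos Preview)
Your proof is correct and follows essentially the same approach as the paper: both verify accuracy by direct substitution using $\mat{U}^T\mat{U}=\mat{I}$, and both reduce skew-symmetry to the fact that $\mat{V}^T\mat{G}_x$ (equivalently $\mat{G}_x^T\mat{V}=\mat{V}_x^T\mat{M}\mat{V}-\tfrac{1}{2}\mat{V}^T\mat{E}_x\mat{V}$) is skew, which follows from $2p-1$ quadrature exactness, the boundary identity~\eqref{eq:sbp_boundary}, and integration by parts. The only difference is organizational---you isolate the key identity up front and check $\mat{S}_x+\mat{S}_x^T=0$, whereas the paper derives the identity inline and shows the third term of~\eqref{eq:skew_part} is itself skew-symmetric.
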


\begin{proof}
  Using the $QR$ factorization $\mat{V} = \mat{U} \mat{L}^T$ and \eqref{eq:skew_part}, we find 
  \begin{align*}
    \big( \mat{S}_x + {\textstyle \frac{1}{2}} \mat{E}_x \big) \mat{V}
    &= \mat{G}_x \mat{L}^{-T} \mat{U}^T \mat{U} \mat{L}^T - \mat{U} \mat{L}^{-1} \mat{G}_x^{T} \mat{U} \mat{L}^T + \mat{U} \mat{L}^{-1} \mat{G}_x^T \mat{U} \mat{U}^T \mat{U} \mat{L}^T  + {\textstyle \frac{1}{2}} \mat{E}_x \mat{V} \\
    &= \mat{G}_x - \mat{U} \mat{L}^{-1} \mat{G}_x^{T} \mat{U} \mat{L}^T 
    + \mat{U} \mat{L}^{-1} \mat{G}_x^T \mat{U} \mat{L}^T
    + {\textstyle \frac{1}{2}} \mat{E}_x \mat{V} \\
    &= \mat{G}_x + {\textstyle \frac{1}{2}} \mat{E}_x \mat{V} = \mat{M} \mat{V}_x,
  \end{align*}
  where we used the definition of $\mat{G}_x$ on the last line.  Thus, $\mat{S}_x$ defined by Equation~\eqref{eq:skew_part} satisfies the degenerate SBP accuracy constraints.

  To show that $\mat{S}_x$ is skew symmetric, we need to show that the last term in Equation~\eqref{eq:skew_part} is skew symmetric, since the first two terms are jointly skew symmetric.  To this end, we rearrange the last term using the definition of $\mat{G}_x$, the symmetry of $\mat{E}_x$, and the $QR$ factorization $\mat{V} = \mat{U} \mat{L}^{T}$:
  \begin{align*}
    \mat{U} \mat{L}^{-1} \mat{G}_x^T \mat{U} \mat{U}^T
    &= \mat{U} \mat{L}^{-1} \big( \mat{G}_x^T \mat{U} \mat{L}^T \big) \mat{L}^{-T} \mat{U}^T \\
    &= \mat{U} \mat{L}^{-1} \big( \mat{V}_x^T \mat{M} \mat{V} - \textstyle{\frac{1}{2}} \mat{V}^T \mat{E}_x \mat{V} \big) \mat{L}^{-T} \mat{U}^T.
  \end{align*}
  If the term in parentheses is skew symmetric, then the entire right-hand side above will be skew symmetric and, thus, $\mat{S}_x$ will be skew symmetric.
  
  Based on the accuracy of the quadrature rule in $\mat{M}$, and the fact that $\mat{E}_x$ satisfies Equation~\eqref{eq:sbp_boundary}, the $ij$th entry in $\mat{V}_x^T \mat{M} \mat{V} - \frac{1}{2}\mat{V}^T \mat{E}_x \mat{V}$ is
  \begin{align*}
    \big[ \mat{V}_x \mat{M} \mat{V} - \textstyle{\frac{1}{2}} \mat{V}^T \mat{E}_x \mat{V} \big]_{ij} &= \int_{\Omega} \frac{\partial \fnc{V}_i}{\partial x} \fnc{V}_j \, d\Omega - \frac{1}{2} \int_{\Gamma} \fnc{V}_i \fnc{V}_j \, n_x \, d\Gamma \\
    &= \frac{1}{2}  \int_{\Omega} \frac{\partial \fnc{V}_i}{\partial x} \fnc{V}_j \, d\Omega - \frac{1}{2} \int_{\Omega} \frac{\partial \fnc{V}_j}{\partial x} \fnc{V}_i \, d\Omega,
  \end{align*}
  where we used integration by parts to obtain the final expression, which is clearly skew symmetric. Thus, we conclude that $\mat{S}_x$ is a skew-symmetric matrix.
\end{proof}

\subsection{Assembly of global SBP operators}\label{sec:assemble}

Once the cell-based (degenerate) SBP pairs are constructed, they are assembled into global operators using an approach similar to \cite[Theorem 4.1]{Hicken2016multi}.  However, unlike the symmetric operators in~\cite{Hicken2016multi}, the $\mat{E}_x^{\cell}$ in the present work are dense, so the assembly process must be modified to avoid symmetric contributions to $\mat{Q}_x$ from interior cells.

Before proceeding, we introduce some additional notation to simplify the presentation of the assembled SBP operators.  First, let $\mat{P}^{\cell} \in \mathbb{R}^{N^{\cell} \times N}$ be the binary matrix that selects the global degrees of freedom that are in the stencil of cell $\cell$.  The entries in $\mat{P}^{\cell}$ are defined by
\begin{equation*}
  \big[ \mat{P}^{\cell} \big]_{ij} =
  \begin{cases} 1, & \text{if}\; j = \nu_i \in \mathsf{nodes}(\cell),\\
    0, & \text{otherwise}.
  \end{cases}
\end{equation*}
Second, since there is only one cell adjacent to each boundary face $\face \in F_B$, we introduce $\mat{R}^{\face} = \mat{R}^{\face,\cell} \mat{P}^{\cell}$ to interpolate directly from the global nodes to the face nodes, and we will drop the superscript $\cell$ from the matrix $\mat{N}_{x}^{\face,\cell}$ when summing over boundary faces.  We also drop the superscript $\cell$ from $\mat{N}_x^{\face,\cell}$ when summing over interior interfaces, $\face \in F_I$, and adopt the convention that the diagonal entries in $\mat{N}_x^{\face}$ are all positive; recall that each interface in our cut-cell Cartesian grid is perpendicular to one of the coordinate axes.  Finally, we use $\mat{R}^{\face,+} = \mat{R}^{\face,\cell} \mat{P}^{\cell}$ to interpolate directly from the global nodes to the quadrature nodes of face $\face$ via the cell on the ``positive'' side of face $\face$; see Figure~\ref{fig:face_convention}. Similarly, we use $\mat{R}^{\face,-}$ to interpolate to quadrature nodes on the ``negative'' side of face $\face$.

\begin{figure}[tbp]
  \begin{center}
    \includegraphics[width=0.65\textwidth]{./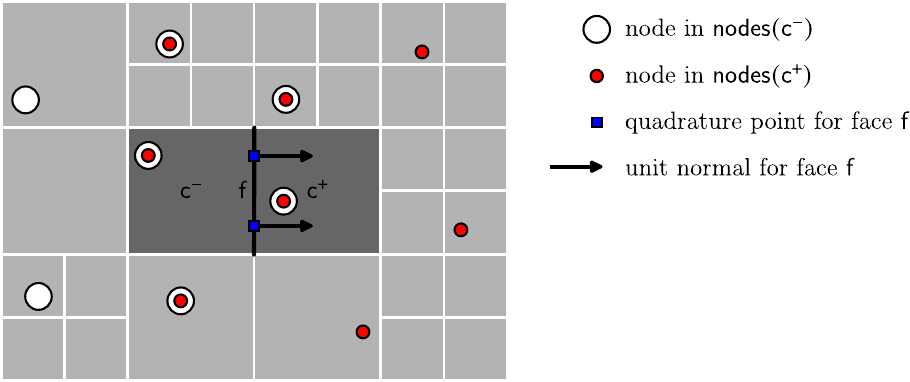}
    \caption[]{\small Illustration of the nodes and quadrature points involved in the interpolation operators $\mat{R}^{\face,-}$ and $\mat{R}^{\face,+}$ for a face $\face$ between two adjacent cells $\cell^{-}$ and $\cell^{+}$ (dark gray squares).  The stencil for the left cell is depicted with large white circles while the stencil for the right cell uses red circles; note that five nodes are shared by the stencils.}\label{fig:face_convention}
  \end{center}
  \end{figure}

\begin{theorem}[Assembly of degenerate SBP pairs]\label{thm:skew_QR}
  Let the domain $\Omega$ be partitioned into non-overlapping cells, $\{ \Omega^{\cell} \}_{\cell=1}^{N_{C}}$, and let $\mat{M}^{\cell}, \mat{Q}_x^{\cell} \in \mathbb{R}^{N^{\cell} \times N^{\cell}}$ denote a degree $p$, degenerate SBP pair on an arbitrary cell $\cell$. Define the matrices $\mat{M}$, $\mat{E}_x$, and $\mat{S}_x$ as follows:
  \begin{align}
    \mat{M} &\equiv \sum_{\cell=1}^{N_{C}} (\mat{P}^{\cell})^T \; \mat{M}^{\cell} \; \mat{P}^{\cell},  \label{eq:global_M} \\
    \mat{E}_x &\equiv \sum_{\face \in F_B} \big( \mat{R}^{\face} \big)^T \; \mat{B}^{\face} \; \mat{N}_{x}^{\face} \; \mat{R}^{\face}, \label{eq:global_E} \\
    \mat{S}_x &\equiv \sum_{\cell=1}^{N_{C}} \big( \mat{P}^{\cell} \big)^T \; \mat{S}_x^{\cell} \; \mat{P}^{\cell} 
    + \frac{1}{2} \sum_{\face \in F_I} \big( \mat{R}^{\face,-} \big)^{T} \; \mat{B}^{\face} \; \mat{N}_{x}^{\face} \; \mat{R}^{\face,+} 
    - \frac{1}{2} \sum_{\face \in F_I} \big( \mat{R}^{\face,+} \big)^{T} \; \mat{B}^{\face} \; \mat{N}_{x}^{\face} \; \mat{R}^{\face,-} \label{eq:global_S}
  \end{align}
  Then $\mat{M}$ and $\mat{Q}_x \equiv \mat{S}_x + {\textstyle \frac{1}{2}}\mat{E}_x$ form a degree $p$, degenerate SBP pair.
\end{theorem}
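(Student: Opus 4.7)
The plan is to verify that the assembled triple $(\mat{M}, \mat{S}_x, \mat{E}_x)$ satisfies the three conditions of Definition~\ref{def:degen_sbp}. The second condition is nearly immediate from the construction: $\mat{M}$ is diagonal because each summand $(\mat{P}^{\cell})^T \mat{M}^{\cell} \mat{P}^{\cell}$ simply scatters the diagonal entries of $\mat{M}^{\cell}$ into the corresponding global indices; the factorization $\mat{Q}_x = \mat{S}_x + \tfrac{1}{2}\mat{E}_x$ is built in; $\mat{E}_x$ is symmetric because each face summand has the form $\mat{R}^{T}(\mat{B}^{\face}\mat{N}_x^{\face})\mat{R}$ with a diagonal middle factor; and $\mat{S}_x$ is skew symmetric because each cell contribution $\mat{S}_x^{\cell}$ is skew and the two interior-face sums are transposes of one another carrying opposite signs.

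For the third condition I would evaluate $\mat{V}^T \mat{E}_x \mat{V}$ directly. Since $\mat{R}^{\face}$ exactly interpolates degree $p$ polynomials, $\mat{R}^{\face} \mat{V} = \mat{V}^{\face}$, so each face summand collapses to $(\mat{V}^{\face})^{T} \mat{B}^{\face} \mat{N}_x^{\face} \mat{V}^{\face}$. Because the face quadrature is exact for degree $2p$ polynomials, the $ij$th entry equals $\int_{\Gamma^{\face}} \fnc{V}_i \fnc{V}_j\, n_x\, \d \Gamma$, and summing over $\face \in F_B$ recovers the integral over $\Gamma$.

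The substantive step is the accuracy identity $\mat{Q}_x \mat{V} = \mat{M}\mat{V}_x$. Starting from $\mat{M}\mat{V}_x = \sum_{\cell} (\mat{P}^{\cell})^T \mat{M}^{\cell} \mat{V}_x^{\cell}$ (using $\mat{P}^{\cell}\mat{V}_x = \mat{V}_x^{\cell}$) and invoking the cell-local SBP accuracy $\mat{M}^{\cell}\mat{V}_x^{\cell} = \mat{S}_x^{\cell}\mat{V}^{\cell} + \tfrac{1}{2}\mat{E}_x^{\cell}\mat{V}^{\cell}$, the cell skew contributions match the first sum in $\mat{S}_x\mat{V}$ term by term. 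It then suffices to show that the pooled symmetric contributions $\tfrac{1}{2}\sum_{\cell} (\mat{P}^{\cell})^T \mat{E}_x^{\cell}\mat{V}^{\cell}$, after expanding each $\mat{E}_x^{\cell}$ by~\eqref{eq:sym_part} and using $\mat{R}^{\face,\cell}\mat{V}^{\cell} = \mat{V}^{\face}$, reassemble into the two interior-face sums in $\mat{S}_x\mat{V}$ plus the global boundary term $\tfrac{1}{2}\mat{E}_x\mat{V}$.

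The reassembly is the main obstacle, and the sign convention for normals on interior faces is where I would be most careful. Each interior face $\face \in F_I$ is shared by cells $\cell^{+}$ and $\cell^{-}$; the outward normals $\mat{N}_x^{\face,\cell^{\pm}}$ differ by a sign relative to the globally chosen $\mat{N}_x^{\face}$, while $\mat{R}^{\face,\cell^{\pm}}\mat{P}^{\cell^{\pm}}$ equals $\mat{R}^{\face,\pm}$. Adding the two cell contributions for a single interior $\face$ therefore produces the antisymmetric factor $\bigl[(\mat{R}^{\face,-})^{T} - (\mat{R}^{\face,+})^{T}\bigr] \mat{B}^{\face}\mat{N}_x^{\face}\mat{V}^{\face}$, which is exactly what the antisymmetric pair of interior-face sums in the definition of $\mat{S}_x$ yields once $\mat{R}^{\face,\pm}\mat{V} = \mat{V}^{\face}$ is substituted. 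The boundary faces appear once and contribute to $\tfrac{1}{2}\mat{E}_x\mat{V}$. Once these signs and side assignments are pinned down, the identity $\mat{Q}_x\mat{V} = \mat{M}\mat{V}_x$ follows directly from the cell-local SBP property, completing the verification.
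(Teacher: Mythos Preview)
Your proposal is correct and follows essentially the same argument as the paper's proof. The only cosmetic difference is that you verify accuracy starting from $\mat{M}\mat{V}_x$ and building toward $\mat{Q}_x\mat{V}$, whereas the paper starts from $\mat{Q}_x\mat{V}$ and decomposes it into five sums; in both cases the crux is expanding $\sum_{\cell}(\mat{P}^{\cell})^{T}\mat{E}_x^{\cell}\mat{V}^{\cell}$ face-by-face, using the opposite outward normals on shared interior faces, and matching the result against the interior-face terms in $\mat{S}_x\mat{V}$ plus the boundary term $\tfrac{1}{2}\mat{E}_x\mat{V}$.
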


\begin{proof}
  By inspection, the matrix $\mat{M}$ is diagonal, the matrix $\mat{E}_x$ is symmetric, and the matrix $\mat{S}_x$ is skew symmetric.

  Next, consider the degenerate SBP accuracy constraint, $\mat{Q}_x \mat{V} = \mat{M} \mat{V}_x$.  Using the factorization $\mat{Q}_x = \mat{S}_x + \frac{1}{2}\mat{E}_x$ at both the global and cell levels, and substituting definitions~\eqref{eq:global_E} and \eqref{eq:global_S}, the left-hand side of the accuracy condition \eqref{eq:sbp_accuracy} can be written as five distinct sums (see~\ref{app:detail} for more details):
  \begin{multline}\label{eq:five_sums}
  \mat{Q}_x \mat{V} 
    = \underbrace{\sum_{\cell=1}^{N_{C}} \big( \mat{P}^{\cell} \big)^T \; \mat{Q}_x^{\cell} \; \mat{V}^{\cell}}_{\displaystyle \text{Sum 1}} 
    \quad\underbrace{- \frac{1}{2}\sum_{\cell=1}^{N_{C}} \big( \mat{P}^{\cell} \big)^T \; \mat{E}_x^{\cell} \; \mat{V}^{\cell}}_{\displaystyle \text{Sum 2}}
    \quad\underbrace{+ \frac{1}{2} \sum_{\face \in F_I} \big( \mat{R}^{\face,-} \big)^{T} \; \mat{B}^{\face} \; \mat{N}_{x}^{\face} \; \mat{V}^{\face}}_{\displaystyle \text{Sum 3}} \\[2ex]
    \underbrace{- \frac{1}{2} \sum_{\face \in F_I} \big( \mat{R}^{\face,+} \big)^{T} \; \mat{B}^{\face} \; \mat{N}_{x}^{\face} \; \mat{V}^{\face}}_{\displaystyle \text{Sum 4}}
    \quad\underbrace{+ \frac{1}{2} \sum_{\face \in F_B} \big( \mat{R}^{\face} \big)^T \; \mat{B}^{\face} \; \mat{N}_{x}^{\face} \; \mat{V}^{\face},}_{\displaystyle \text{Sum 5}}
  \end{multline}
  where we have introduced the following interpolated Vandermonde matrices:
  \begin{alignat*}{2}
  \mat{V}^{\cell} &\equiv \mat{P}^{\cell} \mat{V}, & \forall\, \cell&\in \{ 1,2,\ldots,N_{C} \}, \\
  \mat{V}^{\face} &\equiv \mat{R}^{\face} \mat{V}, & \qquad \forall\, \face &\in F_B,\\
  \text{and}\qquad \mat{V}^{\face} &\equiv \mat{R}^{\face,+} \mat{V} = \mat{R}^{\face,-} \mat{V},& \qquad \forall\, \face &\in F_I.
  \end{alignat*}
  The accuracy of the cell-based, degenerate SBP pairs can be used to simplify Sum 1 as follows:
  \begin{equation*}
    \sum_{\cell=1}^{N_{C}} \big( \mat{P}^{\cell} \big)^T \; \underbrace{\mat{Q}_x^{\cell} \; \mat{V}^{\cell}}_{\displaystyle \mat{M}^{\cell}\; \mat{V}_x^{\cell}}
    = \sum_{\cell=1}^{N_{C}} \big( \mat{P}^{\cell} \big)^T \; \mat{M}^{\cell} \; \mat{V}_x^{\cell}
    = \sum_{\cell=1}^{N_{C}} \big( \mat{P}^{\cell} \big)^T \; \mat{M}^{\cell} \; \mat{P}^{\cell} \; \mat{V}_x
    = \mat{M} \mat{V}_x.
  \end{equation*}
  Substituting this result into Equation \eqref{eq:five_sums}, we find 
  \begin{equation*}
    \mat{Q}_x \mat{V} = \mat{M} \mat{V}_x + \text{Sum 2} + \text{Sum 3} + \text{Sum 4} + \text{Sum 5}.
  \end{equation*}
  Consequently, if we can show that Sums 2, 3, 4, and 5 cancel with one another, the accuracy condition will be established.

  Sum 2 can be rearranged into a sum over the interfaces $F_I$ and boundary faces $F_B$.  Every interface is shared by two adjacent cells, so the sum over the interfaces includes two terms while the sum over boundary faces includes only one. 
  \begin{align}
    - \frac{1}{2}\sum_{\cell=1}^{N_{C}} \big( \mat{P}^{\cell} \big)^T \; \mat{E}_x^{\cell} \; \mat{V}^{\cell}
    &= -\frac{1}{2} \sum_{\cell=1}^{N_{C}} \big( \mat{P}^{\cell} \big)^T 
      \sum_{\face \in F^{\cell}} \big( \mat{R}^{\face,\cell} \big)^T \; \mat{B}^{\face} \; \mat{N}_{x}^{\face,\cell} \; \mat{R}^{\face,\cell} \; \mat{V}^{\cell} \notag \\
    &= -\frac{1}{2} \sum_{\face \in F_I} \big( \mat{R}^{\face,-} - \mat{R}^{\face,+} \big)^T \; \mat{B}^{\face} \; \mat{N}_{x}^{\face} \; \mat{V}^{\face} 
    - \frac{1}{2} \sum_{\face \in F_B} \big( \mat{R}^{\face} \big)^T \; \mat{B}^{\face} \; \mat{N}_{x}^{\face} \; \mat{V}^{\face}. \label{eq:sum2}
  \end{align}
  Comparing Sum 2 --- specifically the right-hand side of \eqref{eq:sum2} --- with Sums 3, 4, and 5, we conclude that these four sums cancel, as desired. 

  Finally, $\mat{E}_x$ satisfies Condition~\eqref{eq:sbp_boundary} in Definition~\ref{def:degen_sbp}, because the quadrature rule on each face is $2p$ exact:
  \begin{equation*}
    \big[ \mat{V}^T \mat{E}_x \mat{V} \big]_{ij}
    = \sum_{\face \in F_B} \left[ \big( \mat{V}^{\face} \big)^T \; \mat{B}^{\face} \; \mat{N}_{x}^{\face} \; \mat{V}^{\face} \right]_{ij} 
    = \sum_{\face \in F_B} \int_{\Gamma_{\face}} \fnc{V}_i \fnc{V}_j \, n_x \, d\Gamma = \int_{\Gamma} \fnc{V}_i \fnc{V}_j \, n_x \, d\Gamma.
  \end{equation*}
  Therefore, we have shown that $\mat{M}$ and $\mat{Q}_x$ form a degenerate SBP pair.
\end{proof}

\subsection{Algorithm for constructing degenerate SBP pairs}

Algorithm~\ref{alg:degenerate} summarizes the construction of degenerate SBP pairs $\mat{M}$ and $\mat{Q}_x$ for a given node distribution, domain bounds, level-set function (if necessary), minimum cell edge lengths, and target polynomial degree.  For further details, the relevant section of the paper is listed to the right of each major step.

Some steps in Algorithm~\ref{alg:degenerate} can be implemented in a mathematically equivalent form that is more efficient.  The list below highlights the alternative forms that we have adopted in our implementation.
\begin{itemize}
\item The algorithm only shows the construction of $\mat{Q}_x$, but the operators for all $\sdim$ coordinate directions are constructed at the same time to reduce cost by amortizing certain computations.  For instance, the skew symmetric operators defined by \eqref{eq:skew_part} can reuse the $QR$ factorization $\mat{V}^{\cell} = \mat{U} \mat{L}^{T}$.
\item The algorithm indicates that the quadrature rules are generated before looping over the cells and faces; instead, we generate the tensor-product and cut-cell quadrature rules ``on-the-fly'' during the loops to reduce memory.
\item The algorithm's outputs are shown as $\mat{M}$ and $\mat{Q}_x$; in practice, we return $\mat{S}_x$ and the boundary-face operators $\mat{R}^{\face}$, $\mat{B}^{\face}$, $\mat{N}_x^{\face}$, $\forall\; \face \in F_B$, because the face operators are needed to impose boundary conditions weakly.
\item The diagonal mass matrix $\mat{M}$ is stored as a vector, and we store only the upper triangular part of $\mat{S}_x$ in a sparse-matrix format.
\end{itemize}

\begin{algorithm}[tbp]\DontPrintSemicolon
  \SetKwInOut{Input}{Input}\SetKwInOut{Output}{Output}
  \SetKwComment{Comment}{\# }{}
  \Input{nodes $X$, domain bounds $\{[l_i,u_i]\}_{i=1}^{\sdim}$, level-set $\phi$, minimum cell edge lengths, $\{ \Delta x_{i,\min} \}_{i=1}^{\sdim}$, target degree $p$}
  \Output{Degenerate SBP pair $\mat{M}$ and $\mat{Q}_x$}
  \;
  Generate background mesh  \Comment*[r]{Section \ref{sec:quad_mesh}} 
  Define quadrature rules for cells and faces \Comment*[r]{Section \ref{sec:quad}}
  Create stencil for each cell using Algorithm~\ref{alg:stencil} \Comment*[r]{Section \ref{sec:stencil}}
  \;
  initialize global matrices: $\mat{M} \gets \mat{0}$, $\mat{E}_x \gets \mat{0}$, and $\mat{S}_x \gets \mat{0}$\;
  \For{each cell $\cell$ in the mesh}
  {
    $\mat{M}^{\cell} \gets$ solution of Equation~\eqref{eq:norm_conditions} \Comment*[r]{Section \ref{sec:norm}}
    $\mat{E}^{\cell} \gets$ right-hand side of Equation~\eqref{eq:sym_part} \Comment*[r]{Section \ref{sec:sym_part}}
    $\mat{S}^{\cell} \gets$ right-hand side of Equation~\eqref{eq:skew_part} \Comment*[r]{Section \ref{sec:skew_part}}

    $\mat{M} \gets \mat{M} + \big(\mat{P}^{\cell}\big)^T \; \mat{M}^{\cell} \; \mat{P}^{\cell}$ \Comment*[r]{Section \ref{sec:assemble}}
    $\mat{S}_x \gets \mat{S}_x + \big(\mat{P}^{\cell}\big)^T \;\mat{S}_x^{\cell} \; \mat{P}^{\cell}$ \Comment*[r]{Section \ref{sec:assemble}}
  }
  \;
  \For{each interface $\face \in F_I$}
  {
    $\mat{S}_x \gets \mat{S}_x + \frac{1}{2} \big( \mat{R}^{\face,-} \big)^T \; \mat{B}^{\face} \; \mat{N}_x^{\face} \; \mat{R}^{\face,+}$ \Comment*[r]{Section \ref{sec:assemble}}
    $\mat{S}_x \gets \mat{S}_x - \frac{1}{2} \big( \mat{R}^{\face,+} \big)^T \; \mat{B}^{\face} \; \mat{N}_x^{\face} \; \mat{R}^{\face,-}$ \Comment*[r]{Section \ref{sec:assemble}}
  }
  \;
  \For{each boundary face $\face \in F_B$}
  {
    $\mat{E}_x \gets \mat{E}_x + \big( \mat{R}^{\face})^T \; \mat{B}^{\face} \; \mat{N}_x^{\face} \; \mat{R}^{\face}$ \Comment*[r]{Section \ref{sec:assemble}}
  }
  \;
  $\mat{Q}_x \gets \mat{S}_x + \frac{1}{2} \mat{E}_x$

  \caption{\small Algorithm to construct degenerate SBP pairs.}\label{alg:degenerate}
\end{algorithm}

\subsection{An operator for artificial dissipation}\label{sec:dissipation}

If a partial-differential equation is energy stable, it is usually possible to discretize the equation using SBP operators and maintain (discrete) energy stability without introducing numerical dissipation.  Nevertheless, an SBP discretization with dissipation is often more accurate than one without, because the dissipation helps damp spurious high-frequency modes.

We can build an effective dissipation operator for point-cloud SBP discretizations by taking advantage of the solution jumps between cells that have distinct stencils.  To be precise, the weak-form of the dissipation operator is defined by
\begin{equation}\label{eq:dissipation}
  \mat{A} = \epsilon_{\mathsf{diss}} \sum_{\face \in F_I} (\mat{R}_{\mathsf{diss}}^{\face,+} - \mat{R}_{\mathsf{diss}}^{\face,-})^T \mat{B}_{\mathsf{diss}}^{\face} (\mat{R}_{\mathsf{diss}}^{\face,+} - \mat{R}_{\mathsf{diss}}^{\face,-}),
\end{equation}
where $\epsilon_{\mathsf{diss}} \geq 0$ is a dissipation coefficient.  We set $\epsilon_{\mathsf{diss}} = 1/4$ for the results reported in Section~\ref{sec:results}.

The matrices $\mat{R}_{\mathsf{diss}}^{\face,+}$, $\mat{R}_{\mathsf{diss}}^{\face,-}$, and $\mat{B}_{\mathsf{diss}}^{\face}$ are similar to the operators used in the construction of $\mat{S}_x$.  The only difference is that the dissipation operators interpolate to only one point on the face --- the geometric center --- rather than all the face quadrature nodes.  Consequently, the matrix $\mat{B}_{\mathsf{diss}}^{\face}$ is actually a scalar that holds the area of face $\face$.  Limiting the interpolation to one point on the face reduces the computational cost of applying the dissipation operator, which would otherwise grow in proportion to the number of face quadrature points, \ie $(p+1)^{\sdim}$ for the Gauss-Legendre rules used here.

The dissipation operator $\mat{A}$ does not impact the polynomial exactness of the discretization.  To see this, we let $\mat{A}$ act on the degree $p$ Vandermonde matrix and find that the product vanishes:
\begin{equation*}
  \mat{A} \mat{V} = \epsilon_{\mathsf{diss}} \sum_{\face \in F_I} (\mat{R}_{\mathsf{diss}}^{\face,+} - \mat{R}_{\mathsf{diss}}^{\face,-})^T \mat{B}_{\mathsf{diss}}^{\face} (\mat{V}^{\face}_c - \mat{V}^{\face}_c) = \mat{0},
\end{equation*}
where $\mat{V}^{\face}_{c} \in \mathbb{R}^{1 \times N_{p,\sdim}}$ is the row vector of polynomials evaluated at the center of face $\face$.  This also shows that the operator is globally conservative, since the constant vector is in the column space of $\mat{V}$.

Furthermore, it is straightforward to show the dissipation operator is positive semi-definite.  If we contract $\mat{A}$ with an arbitrary vector $\bm{u} \in \mathbb{R}^N$, we find 
\begin{align*}
  \bm{u}^T \mat{A} \bm{u} &= \epsilon_{\mathsf{diss}} \sum_{\face \in F_I} \bm{u}^T (\mat{R}_{\mathsf{diss}}^{\face,+} - \mat{R}_{\mathsf{diss}}^{\face,-})^T \mat{B}_{\mathsf{diss}}^{\face} (\mat{R}_{\mathsf{diss}}^{\face,+} - \mat{R}_{\mathsf{diss}}^{\face,-}) \bm{u} \\
  &= \epsilon_{\mathsf{diss}} \sum_{\face \in F_I} \mat{B}_{\mathsf{diss}}^{\face} 
  \big( \Delta u_c^{\face} \big)^2 \geq 0,
\end{align*}
where $\Delta u_c^{\face} \equiv (\mat{R}_{\mathsf{diss}}^{\face,+} - \mat{R}_{\mathsf{diss}}^{\face,-}) \bm{u}$, and we note that the face area $\mat{B}_{\mathsf{diss}}^{\face}$ is positive, by definition.

\section{Enforcing a positive-definite diagonal norm}\label{sec:optimize}

Degenerate SBP operators with indefinite $\mat{M}$ can be used to solve steady-state problems, because the weak formulation of the discretization only requires the diagonal norm to be a sufficiently accurate quadrature.  On the other hand, an indefinite $\mat{M}$ is not suitable for unsteady problems since the discretization will be unstable, in general.  Unfortunately, for a given node distribution $X$, the mass matrix produced by Algorithm~\ref{alg:degenerate} is often indefinite for $p=1$ and almost always indefinite for $p\geq 2$.

In order to apply our methodology to initial boundary value problems, we require some way of ensuring that the diagonal mass matrix $\mat{M}$ is positive definite.  To this end, we take advantage of the additional degrees of freedom in each cell's stencil to adjust the values in $\mat{M}^{\cell}$. 

To the best of our knowledge, it is not possible to adjust the cell-based mass matrices independently in order to achieve a global, positive-definite mass matrix.  This is because the diagonal entries of $\mat{M}^{\cell}$ are coupled to one another --- see Equation~\eqref{eq:cell_norm_sol} --- such that increasing the value of one entry may decrease another entry.  Consequently, when attempting to make one node's (global) quadrature weight positive we may inadvertently make another weight negative.

To address the coupled nature of the entries in the diagonal mass matrix, we formulate a large, sparse, linear inequality, which we can solve using efficient methods from linear programming.  The goal of this section is to describe this linear inequality and our approach to solving it.

\begin{remark}
In an early version of this work\footnote{The early work was presented at the North American High Order Methods Conference (NAHOMCon) in June 2024; interested readers can contact the corresponding author for a copy of the slides.}, we optimized the node locations to ensure a positive-definite mass matrix.  While this approach worked well, the unconstrained optimization problem was highly nonlinear and ill-conditioned, and it required a sophisticated algorithm to ensure efficient convergence.  By contrast, the approach described here results in a linear program for which many efficient algorithms are readily available. 
\end{remark}

\subsection{Linear inequality for positivity of $\mat{M}$}

Recall that the general solution to the cell-based quadrature is given by Equation~\eqref{eq:cell_norm_sol}.  Weights for a global quadrature can be constructed by assembling these cell-based weights using the formula below:
\begin{equation*}
  \bm{m} \equiv \sum_{\cell=1}^{N_{C}} \big(\mat{P}^{\cell} \big)^T \big( \bm{m}_{\min}^{\cell} + \mat{Z}^{\cell} \bm{y}^{\cell} \big),
\end{equation*}
which can be written succinctly as
\begin{equation}\label{eq:norm_general}
  \bm{m} = \bm{m}_{\min} + \mat{Z} \bm{y},
\end{equation}
where
\begin{equation}\label{eq:m_min_and_Z_def}
\bm{m}_{\min} = \sum_{\cell=1}^{N_{C}} \big(\mat{P}^{\cell} \big)^T \bm{m}_{\min}^{\cell}, \qquad \mat{Z} = \sum_{\cell=1}^{N_{C}} \big(\mat{P}^{\cell} \big)^T \mat{Z}^{\cell} \mat{P}^{\cell}_y,
\end{equation}
and the vector $\bm{y} \in \mathbb{R}^{N_y}$ is the concatenation of the cell-based vectors $\bm{y}^{\cell}$, with $N_{y} = \sum_{\cell=1}^{N_{C}} (N^{\cell} - N_{2p-1,\sdim})$.  We have also introduced the binary prolongation matrix $\mat{P}_{y}^{\cell}$, which maps elements in the matrix $\bm{y}$ to the cell-based unknowns: $\mat{P}_y^{\cell} \bm{y} = \bm{y}^{\cell}$.  Although the construction of $\mat{Z}$ in \eqref{eq:m_min_and_Z_def} appears similar to the global SBP matrices, we emphasize that $\mat{Z}$ is an $N \times N_y$ rectangular matrix.

We want each weight in $\bm{m} \in \mathbb{R}^{N}$ to be positive, but we may want the flexibility to place distinct lower bounds on each node independently.  This is because the nodes will be distributed non-uniformly, in general, and so we expect the weights to be larger where the nodes are more sparse and smaller where they are clustered.  To accommodate this flexibility, let $\tau_{i} > 0$ denote the positive tolerance for the weight at node $i$, \ie $m_i \geq \tau_i$, and let $\bm{\tau}$ denote the vector holding all the tolerances in an order consistent with $\bm{m}$.  Then, it follows from \eqref{eq:norm_general} that the linear inequality that we want satisfied by $\bm{y}$ is 
\begin{equation}\label{eq:norm_inequality}
  \bm{m}_{\min} + \mat{Z} \bm{y} \geq \bm{\tau},
\end{equation}
where the inequality is to be interpreted element-wise.  We will refer to \eqref{eq:norm_inequality} as the norm-inequality problem or, simply, the norm inequality.

\subsection{When is the norm-inequality problem feasible?}

It is not immediately obvious that there is a solution to \eqref{eq:norm_inequality}; indeed, as the results in Section~\ref{sec:results} demonstrate, sometimes the norm-inequality problem does not have a solution.  Fortunately, the theorem below suggests that the situations where a solution fails to exist can be anticipated and easily addressed by adding nodes, and this is supported by the results.

We establish conditions for the feasibility of the norm-inequality problem by leveraging the theory of positive least-squares cubature formulas from~\cite{Glaubitz2022construction}.  One of the hypotheses required by the theory in \cite{Glaubitz2022construction} is that the nodes $X$ are part of an equidistributed sequence\footnote{For our purposes, a sequence $X_N$ is equidistributed on $\Omega$ if \cite{Kuipers2012uniform}
\begin{equation*}
\lim_{N\rightarrow \infty} \frac{|\Omega|}{N} \sum_{i=1}^{N} \mathcal{F}(\bm{x}_i) = \int_{\Omega} \mathcal{F}(\bm{x}) \,d\Omega,\qquad \forall\, \mathcal{F} \in L^{2}(\Omega).
\end{equation*}}
parameterized by $N$, \eg Halton points~\cite{Halton1960efficiency}.  Thus, in the statement of the theorem we use the notation $X_N$ to indicate that the node set includes all the points in the equidistributed sequence up to the $N$th.

In addition, the following theorem only considers the case where the tolerances are zero in \eqref{eq:norm_inequality}, since this is the most important case: if we cannot find a solution with $\bm{\tau} = \bm{0}$ then there is no hope of finding a solution with more aggressive bounds.

\begin{theorem}[Norm-inequality feasbility]\label{thm:feasibility}
Let $X_N \subset \Omega$ be a node set consisting of the first $N \in \mathbb{N}$ points from an equidistributed sequence, and let $\Omega$ be partitioned into a non-overlapping mesh of cells.  Assume the vector $\bm{m}_{\min}$ and matrix $\mat{Z}$ are constructed according to \eqref{eq:m_min_and_Z_def} using $X_N$, the mesh, and a given polynomial degree $p \geq 1$.  If the null-space of $\mat{Z}^T$ is equal to the column space of the Vandermonde matrix $\mat{V}_{2p-1}$, 
then there is an $N^* \in \mathbb{N}$ such that inequality $\bm{m}_{\min} + \mat{Z}\bm{y} \geq \bm{0}$ has a solution for all $N \geq N^*$.
\end{theorem}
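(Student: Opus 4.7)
The plan is to reduce feasibility of \eqref{eq:norm_inequality} (with $\bm{\tau}=\bm{0}$) to the existence of a nonnegative degree-$(2p-1)$ exact quadrature rule at the nodes $X_N$, and then invoke the positive least-squares cubature theory of~\cite{Glaubitz2022construction}.

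First, I would characterize the affine set $\{ \bm{m}_{\min} + \mat{Z}\bm{y} : \bm{y} \in \mathbb{R}^{N_y} \}$ as the complete solution set of the global quadrature conditions $[\mat{V}_{2p-1}]^T \bm{m} = \bm{b}$, where $b_j = \int_{\Omega} \fnc{V}_j \, \d\Omega$. The hypothesis $\operatorname{ker}(\mat{Z}^T) = \operatorname{col}(\mat{V}_{2p-1})$ is equivalent, by the fundamental theorem of linear algebra, to $\operatorname{col}(\mat{Z}) = \operatorname{ker}([\mat{V}_{2p-1}]^T)$, so the range of $\mat{Z}\bm{y}$ is precisely the homogeneous solution space of the global system. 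To complete the characterization, I would verify that $\bm{m}_{\min}$ itself satisfies $[\mat{V}_{2p-1}]^T \bm{m}_{\min} = \bm{b}$. This follows by rewriting each cell's accuracy condition \eqref{eq:norm_conditions_scalar} using the restriction of the global basis (the cell-local Proriol basis and the restricted global basis span the same polynomial space, so the conditions are equivalent modulo an invertible change of basis), premultiplying by $(\mat{P}^{\cell})^T$, and summing over cells: since $\{\Omega^{\cell}\}$ partitions $\Omega$, the right-hand sides add to $\bm{b}$.

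Second, the reduction is then immediate: the inequality $\bm{m}_{\min} + \mat{Z}\bm{y} \geq \bm{0}$ admits a solution $\bm{y}$ if and only if there exists a nonnegative weight vector $\bm{m} \geq \bm{0}$ solving $[\mat{V}_{2p-1}]^T \bm{m} = \bm{b}$, i.e., a nonnegative quadrature rule at the nodes $X_N$ that is exact on $\mathbb{P}^{2p-1}(\Omega)$. The main result of~\cite{Glaubitz2022construction} supplies precisely this: for an equidistributed sequence in a bounded domain $\Omega$ and any fixed finite-dimensional polynomial space, a positive least-squares cubature exact on that space exists for all sufficiently large $N$. Applying this to $\mathbb{P}^{2p-1}(\Omega)$ yields the desired threshold $N^*$, and the conclusion follows.

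The main obstacle I anticipate is the verification step: one must confirm that our setting (bounded $\Omega$, total-degree polynomial space, Lebesgue measure) meets the technical hypotheses of~\cite{Glaubitz2022construction} — in particular, the nondegeneracy of the limiting Gram matrix of the polynomial basis under the equidistributed sampling, and continuity of the basis functions on $\overline{\Omega}$. A secondary bookkeeping detail is the basis-change argument in the first paragraph: the cell-local basis is defined on a reference simplex while the global basis is defined on $\Omega$, so the equivalence of the corresponding cell-level systems, while routine, should be written out explicitly so that the particular-solution status of $\bm{m}_{\min}$ is unambiguous.
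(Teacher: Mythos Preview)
Your proposal is correct and takes a different, arguably cleaner, route than the paper's proof. The paper argues via Farkas' Lemma: if the inequality were infeasible there would exist $\bm{u}\ge\bm{0}$ with $\bm{u}^T\mat{Z}=\bm{0}^T$ and $\bm{m}_{\min}^T\bm{u}<0$; the null-space hypothesis forces $\bm{u}=\mat{V}_{2p-1}\bm{\alpha}$, an $L^2(\Omega)$-orthogonal basis with constant first element reduces $\bm{m}_{\min}^T\bm{u}$ to $\alpha_1$, and the positive least-squares cubature weights $\bm{w}$ from~\cite{Glaubitz2022construction} applied to $\bm{u}\ge\bm{0}$ yield $\alpha_1\ge 0$, a contradiction. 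You instead identify the affine set $\bm{m}_{\min}+\operatorname{col}(\mat{Z})$ with the \emph{entire} solution set of the global moment equations $\mat{V}_{2p-1}^T\bm{m}=\bm{b}$: the hypothesis, via orthogonal complements, gives $\operatorname{col}(\mat{Z})=\ker(\mat{V}_{2p-1}^T)$, and summing the cell-level conditions shows $\bm{m}_{\min}$ is a particular solution. Feasibility is then equivalent to the existence of a nonnegative degree-$(2p-1)$ exact quadrature on $X_N$, which~\cite{Glaubitz2022construction} supplies directly. Your argument dispenses with Farkas, the special orthogonal basis, and the contradiction, and it makes transparent what the null-space hypothesis actually buys---namely that the cell-based degrees of freedom parameterize \emph{every} exact global quadrature, so the positive one guaranteed by~\cite{Glaubitz2022construction} is attainable. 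The paper's duality-based route, on the other hand, would adapt more naturally if one wished to exhibit infeasibility certificates or to analyze the case of strictly positive tolerances $\bm{\tau}>\bm{0}$.
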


\begin{proof}
According to Farkas Lemma, either $\bm{m}_{\min} + \mat{Z}\bm{y} \geq \bm{0}$ has a solution, or there exists a vector $\bm{u} \in \mathbb{R}^{N}$ satisfying $\bm{u}^T \mat{Z} = \bm{0}^T$, $\bm{u} \geq \bm{0}$, and $\bm{m}_{\min}^T \bm{u} < 0$. We will show that, for sufficiently large $N$, no such $\bm{u}$ exists for equidistributed $X_N$.

For the proof, we adopt a polynomial basis $\{\fnc{V}_j\}_{j=1}^{N_{2p-1,\sdim}}$ that is orthogonal with respect to the integral $L^2$ norm over $\Omega$.  Furthermore, we assume that the first basis function is a constant that satisfies $\int_{\Omega} \fnc{V}_{1} \, d\Omega = 1$.  We note that changing the basis does not alter $\bm{m}_{\min}$, nor does it change $\mat{Z}$.

By assumption, the null-space of $\mat{Z}^T$ is the column space of $\mat{V}_{2p-1}$.  It follows that, in order to satisfy $\bm{u}^T \mat{Z} = \bm{0}^T$, the vector $\bm{u}$ must be a linear combination of the columns of $\mat{V}_{2p-1}$, that is
\begin{equation*}
  \bm{u} = \mat{V}_{2p-1} \bm{\alpha},
\end{equation*}
where $\bm{\alpha} \in \mathbb{R}^{N_{2p-1,\sdim}}$ are coefficients to be determined.

Since $\bm{m}_{\min}$ are quadrature weights that are exact for degree $2p-1$ polynomials, and $\bm{u}$ is a linear combination of such polynomials, we can numerically integrate $\bm{u}$ to find 
\begin{equation}\label{eq:Farkas_cond}
  \bm{m}_{\min}^T \bm{u} = \bm{m}_{\min}^T \mat{V}_{2p-1} \bm{\alpha} = \alpha_1,
\end{equation}
where we used $\bm{m}_{\min}^T \mat{V}_{2p-1} = [1,0,\ldots,0]$, which follows from the orthogonality of the basis and the normalization of $\fnc{V}_1$.  If we select $\alpha_1 < 0$ then \eqref{eq:Farkas_cond} implies that $\bm{m}_{\min}^T \bm{u} < 0$.

To conclude the proof using contradiction, suppose there are coefficients $\bm{\alpha}$ such that $\bm{u} = \mat{V}_{2p-1} \bm{\alpha} \geq \bm{0}$.  Using the hypothesis that the nodes $X_N$ are equidistributed, we can apply \cite[Corollary 3.6]{Glaubitz2022construction} to infer that there exists an $N^*$ such that, for all $N \geq N^*$, there is a least-squares cubature formula over $X_N$ with positive weights $\bm{w} \in \mathbb{R}^{N}$ that integrates polynomials in $\mathbb{P}^{2p-1}(\Omega)$ exactly.  Using these weights with the assumption that $\bm{u} \geq \bm{0}$, we find 
\begin{equation*}
 \bm{w}^T \bm{u} \geq 0 \qquad \Rightarrow\qquad 
\bm{w}^T \mat{V}_{2p-1} \bm{\alpha} = \alpha_1 \geq 0,
\end{equation*}
which contradicts $\alpha_1 < 0$.  Thus, for $N \geq N^*$, we have shown that we cannot have $\bm{u}^T \mat{Z} = \bm{0}^T$, $\bm{u} \geq 0$, and $\bm{m}_{\min}^T \bm{u} < 0$ simultaneously.  Thus, Farkas Lemma implies that $\bm{m}_{\min} + \mat{Z}\bm{y} \geq \bm{0}$ has a solution for $N \geq N^*$.
\end{proof}

\begin{remark}
In the last stage of the proof, we cannot use $\bm{m}_{\min}$ to integrate the vector of inequalities $\bm{u} \geq \bm{0}$, because some entries in $\bm{m}_{\min}$ may be negative and would flip their corresponding inequality.  Thus, we must resort to the hypothesis that the nodes $X_N$ are equidistributed and use the conditional existence of a least-squares cubature formula with positive weights.
\end{remark}

The assumptions in the statement of Theorem~\ref{thm:feasibility} raise a few practical concerns:
\begin{itemize}
\item How important is it to use an equidistributed sequence for the nodes?
\item What if $N^*$ is impractically large?
\item Under what conditions is the null-space of $\mat{Z}^T$ equal to the column space of $\mat{V}_{2p-1}$?
\end{itemize}
To address the first concern, we note that the result would still follow as long as a \emph{subset} of the nodes are equidistributed.  In the adapted proof, we would numerically integrate $\bm{u}$ by applying the least-squares weights $\bm{w}$ to the equidistributed subset of nodes and applying weights of zero to the remaining nodes.

We appeal to the numerical results in Section~\ref{sec:success_rate} for the concern regarding the size of $N^*$.  When using quasi-uniform nodes, we find that the norm-inequality only fails to have a solution when $N \approx N_{2p-1,\sdim}$, \ie, node distributions so sparse that numerical errors would be unacceptably large anyway.  When the nodes are distributed non-uniformly, \eg, to resolve a boundary layer, then $N^*$ will probably need to be larger to ensure sufficient coverage of nodes over the entire domain.

The last assumption that warrants discussion is the null-space of $\mat{Z}^T$.  First, it is straightforward to show that the null-space \emph{includes} the column space of $\mat{V}_{2p-1}$.  Recall from Section \ref{sec:norm} that the columns of $\mat{Z}^{\cell}$ are in the null-space of the transposed, cell-based Vandermonde matrix, \ie $[\mat{V}_{2p-1}^{\cell}]^T \mat{Z}^{\cell} = \mat{0}$.  Thus, the assembled matrix $\mat{Z}$ is in the null-space of the transposed (global) Vandermonde matrix $\mat{V}_{2p-1}^T$:
\begin{equation*}
\mat{V}_{2p-1}^T \mat{Z} = \mat{V}_{2p-1}^T \left( \sum_{\cell=1}^{N_{C}} \big(\mat{P}^{\cell} \big)^T \mat{Z}^{\cell} \mat{P}_y^{\cell} \right) = \sum_{\cell=1}^{N_{C}} \underbrace{\big(\mat{V}_{2p-1}^{\cell}\big)^T \mat{Z}^{\cell}}_{\displaystyle = \mat{0}} \mat{P}_y^{\cell} = \mat{0}.
\end{equation*}
The challenge is showing that the null-space \emph{is equal to} the column space of $\mat{V}_{2p-1}$.  Proving this is beyond the scope of this work, but we offer some intuition regarding when it should hold.  First, the row rank of $\mat{Z}$ must be $N - N_{2p-1,\sdim}$, otherwise there would another (nontrival) vector $\bm{u} \in \mathbb{R}^{N}$ that is not in the column space of $\mat{V}_{2p-1}$ but nevertheless satisfies $\bm{u}^T \mat{Z} = \bm{0}^T$.  To ensure that $\mat{Z}$ has this maximal rank, its rows must be sufficiently ``independent.''  For example, if each node $i$ is in the stencil of a unique set of cells --- \ie no other node is associated with the same set of cells --- then the sparsity pattern of each row of $\mat{Z}$ will be distinct.  Unfortunately, while this might provide a path to proving the result we want, we hypothesize that it is a sufficient but not necessary condition.

\subsection{Solving the norm-inequality problem}\label{sec:norm_solve}

Linear inequalities, such as \eqref{eq:norm_inequality}, can be formulated as linear optimization problems with a trivial objective.  Specifically, we consider the linear program 
\begin{equation}\label{eq:linear_program}
\begin{aligned}
\min_{\bm{y}} &\quad 0, \\
\text{s.t.} &\quad \bm{m}_{\min} + \mat{Z} \bm{y} \geq \bm{\tau}.
\end{aligned}
\end{equation}
We solve this linear optimization problem using the interior-point algorithm implemented in the Julia package \texttt{Tulip.jl}~\cite{Tanneau2021design}.  Tulip uses the homogeneous self-dual form of \eqref{eq:linear_program}, which allows it to identify optimal, infeasible, and unbounded solutions.  Moreover, Tulip is able to take advantage of the sparsity of $\mat{Z}$ and, in our experience, its solution time is a fraction ($<$ 1\%) of the total time required to assemble and solve a discretization.

The solution to \eqref{eq:linear_program}, if it exists, is not unique.  Tulip returns a particular solution that has been adequate for our purposes.  Future work should investigate other possible solutions, such as the minimum-norm solution obtained using the objective $\bm{y}^T \bm{y}$ in \eqref{eq:linear_program}.  This would make the optimization problem a convex-quadratic program with a unique solution.

\section{Numerical verifications and demonstrations}\label{sec:results}

This section presents numerical studies that verify the properties of the proposed point-cloud SBP operators.  In particular, we dedicate several studies to investigating the diagonal mass matrix, or norm, since the practical value of the proposed operators hinges on the existence and characteristics of the mass matrix.  All results were obtained using the implementation in the \texttt{CloudSBP.jl} Julia package~\cite{CloudSBP2024}.

\subsection{Geometries, node distributions, and discretizations}\label{sec:results_setup}

Most of the studies in Sections~\ref{sec:norm_studies}--\ref{sec:runtimes}  share similar geometries, node distributions, and discretizations.  These common elements are described in this section, to avoid repetition later.

\subsubsection{Geometries: box-circle, annulus, and airfoil}\label{sec:geometries}

Three geometries are adopted for the majority of the numerical studies.  The first geometry is a unit square with a circle of radius $r=1/4$ excised from its center.  This geometry was introduced in Figure~\ref{fig:ex_domain} and is defined precisely below:
\begin{equation*}
  \Omega_{\text{box}} \equiv \big\{ \bm{x} \in [0,1]^2 \;|\; \| \bm{x} - \bm{x}_c\| \geq {\textstyle \frac{1}{4}}  \big\}, 
\end{equation*}
where $\bm{x}_c = [1/2, 1/2]^T$ is the domain's geometric center.

The second geometry is an annulus, centered at the origin, whose outer radius is one and whose inner radius is 1/2:
\begin{equation*}
  \Omega_{\text{ann}} \equiv \big\{ \bm{x} \in \mathbb{R}^{2} \;|\; {\textstyle \frac{1}{2}} \leq \| \bm{x} \| \leq 1 \big\}.
\end{equation*}
The annulus geometry can be seen in Figure~\ref{fig:uniform_norm_dist}.

The third geometry is an airfoil shape that we include because it has a non-smooth feature. The airfoil is defined as 
\begin{equation*}
  \Omega_{\text{foil}} \equiv \big\{ \bm{x} \in [0,1]\times[\textstyle{-\frac{1}{10}}, \textstyle{\frac{1}{10}}] \;|\; x(x - 1)^2 - 16y^2 \geq 0 \big\}.
\end{equation*}
The rectangular domain $[0,1]\times[-1/10,1/10]$ in the definition of $\Omega_{\text{foil}}$ is chosen to produce a sharp trailing edge at $\bm{x} = [1,0]^T$ rather than a pair of intersecting curves.  Figure~\ref{fig:airfoil} shows the airfoil geometry.

\begin{figure}[tbp]
  \begin{center}
    \includegraphics[width=\textwidth]{./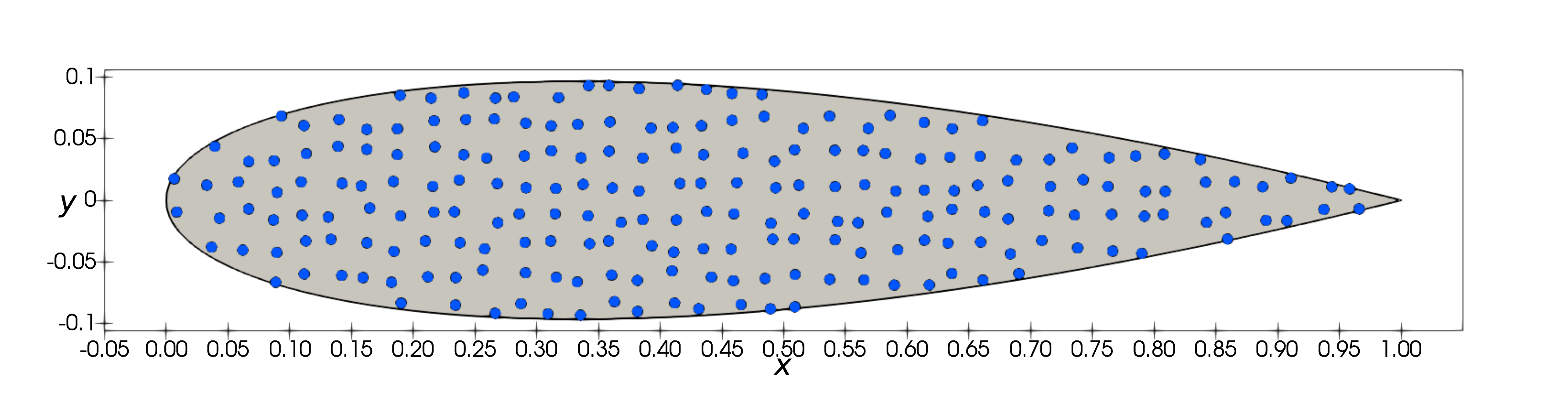}
    \caption[]{\small Airfoil geometry $\Omega_{\text{foil}}$ and an example node distribution used for the accuracy studies.}\label{fig:airfoil}
  \end{center}
\end{figure}

\subsubsection{Node distributions}\label{sec:node_distribution}

The node distributions that we use depend on the geometry.  For the box-circle geometry, we generate a uniform grid with $n_x = n_y$ nodes in each coordinate direction, and then randomly perturb the coordinates.  We then remove all nodes that are inside the circle of radius 1/4.  More precisely, consider the set of nodes $\hat{X}_{\text{box}} = \{ \bm{x}_{i} \}_{i=1}^{n_x^2}$ where the coordinates of the $i$th node are given by 
\begin{alignat*}{2}
  x_i &= \big( j - {\textstyle \frac{1}{2}} \big) \Delta x + \xi_i,  &\qquad &\forall j \in \{1,2,\ldots, n_x \}, \\
  y_i &= \big( k - {\textstyle \frac{1}{2}} \big) \Delta y + \eta_i, 
  &\qquad &\forall k \in \{1,2,\ldots, n_x \},
\end{alignat*}
with $\Delta x = \Delta y = 1/n_x$ and $i = k n_x + j$.  The realizations $\xi_i$ and $\eta_i$ are drawn from the uniform distribution $U[-\Delta x/4, \Delta x/4]$.  The final node set for a particular realization is given by 
\begin{equation*}
  X_{\text{box}} = \big\{ \bm{x} \in \hat{X}_{\text{box}} \; | \; \| \bm{x} - \bm{x}_c \| \geq {\textstyle \frac{1}{4}} \big\}.
\end{equation*}
Using this procedure the number of nodes $N = |X_{\text{box}}|$ depends on the realization.  See Figure~\ref{fig:ex_points} for an example of the resulting node distribution.

For the annulus domain, we consider both quasi-uniform and nonuniform (graded) node distributions.  We begin with a rectilinear grid in polar coordinates, which is then perturbed randomly before being mapped to Cartesian coordinates.  Let $n_r$ and $n_{\theta}$ denote the number of nodes in the radial and angular directions, respectively.  Then the polar coordinates of the nodes are $\hat{X}_{\text{ann}} = \{ [r_i, \theta_i]^T  \}_{i=1}^{n_r n_{\theta}}$ where $r_i = \rho_i + (1-\rho_i)/2$ and 
\begin{alignat*}{2}
  \rho_i &= g\left( \big( j - {\textstyle \frac{1}{2}} \big) \Delta \rho + \xi_i \right), & \qquad &\forall j \in \{1,2,\ldots, n_r \}, \\
  \theta_i &= \big( k - {\textstyle \frac{1}{2}} \big) \Delta \theta + \eta_i,
  &\qquad &\forall k \in \{1,2,\ldots, n_{\theta} \},
\end{alignat*}
with $\Delta \rho = 1/n_r$, $\Delta \theta = 2 \pi / n_{\theta}$, and $i = k n_r + j$.  Here the realization $\xi_i \sim U[-\Delta \rho/4,\Delta \rho /4]$ and the realization $\eta_i \sim U[-\Delta \theta/4, \Delta \theta/4 ]$.  The function $g : \mathbb{R} \rightarrow \mathbb{R}$ is used to control the node distribution in the radial direction.  For the following studies we use 
\begin{equation*}
  g(z) = (e^{\beta z} -1)/(e^{\beta} -1),
\end{equation*}
where $\beta \in (0,\infty)$ is the stretching parameter.  If $\beta \approx 0$ we have $g(z) \approx z$ and the distribution becomes quasi uniform (in polar coordinates).  As $\beta$ increases, the nodes cluster toward the inner radius of $\Omega_{\text{ann}}$.   The final node set for the annulus domain is given by 
\begin{equation*}
  X_{\text{ann}} = \{ [r \cos(\theta), r \sin(\theta)]^T \; | \; [r,\theta]^T \in \hat{X}_{\text{ann}} \}.
\end{equation*}

The nodes for the airfoil geometry are generated like the box-circle nodes, except there are five times more nodes in the $x$ direction than the $y$ direction.  We begin by generating the set $\hat{X}_{\text{foil}} = \{ \bm{x}_{i} \}_{i=1}^{5n_y^2}$ where the coordinates of $\bm{x}_i$ are given by 
\begin{alignat*}{2}
  x_i &= \big( j - {\textstyle \frac{1}{2}} \big) \Delta x + \xi_i,  &\qquad &\forall j \in \{1,2,\ldots, 5 n_y \} \\
  y_i &= -\frac{1}{10} + \big( k - {\textstyle \frac{1}{2}} \big) \Delta y + \eta_i, 
  &\qquad &\forall k \in \{1,2,\ldots, n_y \},
\end{alignat*}
and $i = 5 k n_y + j$.  The nominal node spacing is the same in the $x$ and $y$ directions: $\Delta x = \Delta y = 1/(5n_y)$.  The random perturbations $\xi_i$ and $\eta_i$ are drawn from $U[-\Delta x/4,\Delta x/4]$, and the final node set is 
\begin{equation*}
  X_{\text{foil}} = \{ \bm{x} \in \hat{X}_{\text{foil}} \;|\; x(x - 1)^2 - 16y^2 \geq 0 \}.
\end{equation*}
As with the box-circle, $N = | X_{\text{foil}} |$ depends on the sampled perturbations $\xi_i$ and $\eta_i$.  Figure~\ref{fig:airfoil} shows a particular airfoil node distribution with $n_y = 8$ and $N = 220$ nodes total.

\subsubsection{Norm-inequality tolerances}\label{sec:tolerances}

The tolerances $\bm{\tau}$ in the norm inequality~\eqref{eq:norm_inequality} are used to prevent small diagonal entries in $\mat{M}$; however, what constitutes ``small'' depends on the node distribution.  For the box-circle geometry's quasi-uniform nodes, the tolerances are computed using $1/10$ the average node ``volume'':
\begin{equation*}
  \tau_{\text{box},i} = \frac{1}{10} \frac{|\Omega_{\text{box}}|}{N} = \frac{1}{10} \frac{\big( 1 - \pi/16 \big)}{N}, \qquad \forall\; i \{1,2,\ldots,N\};
\end{equation*}
recall that $N$ depends on the particular node realization for $\Omega_{\text{box}}$.

The annulus' node distribution can be either quasi-uniform or clustered, so the following tolerance formula takes this into account:
\begin{equation*}
  \tau_{\text{ann},i} = \frac{1}{10} (\Delta r)_i  r_i (\Delta \theta),
\end{equation*}
where the angular spacing is $\Delta \theta = 2\pi/n_{\theta}$ and the radial spacing is given by
\begin{equation*}
  (\Delta r)_i = g' \left( \big( j - {\textstyle \frac{1}{2}} \big)\Delta \rho \right) \Delta \rho,\qquad \forall\; j \in \{1,2,\ldots, n_r \},\; \forall\; k \in \{1,2,\ldots, n_{\theta} \},
\end{equation*}
and recall that $i = k n_r + j$.  Note that $(\Delta r)_i  r_i (\Delta \theta)$ approximates the node's volume in Cartesian coordinates; like the box-circle tolerance, the factor of $1/10$ permits a quadrature weight one order of magnitude smaller than the nominal volume.

Finally, we use the following tolerance for the airfoil geometry:
\begin{equation*}
\tau_{\text{foil},i} = \frac{1}{10} \Big( \frac{1}{5 n_y} \Big)^{2}, \qquad \forall\; i \{1,2,\ldots,N\}.
\end{equation*}
Again, this represents one tenth of the average node ``volume,'' since the nominal node spacing in the $x$ and $y$ directions is $1/(5 n_y)$ for the airfoil geometry.

\begin{remark}
Setting $\tau_i$ to one tenth the average node ``volume'' is informed by the results of the success-rate study in Section~\ref{sec:success_rate}.  However, this heuristic will likely fail for some node distributions, particularly those that are highly nonuniform and anisotropic, so developing a general algorithm to set $\tau_i$ will be the focus of future work.
\end{remark}

\subsubsection{Linear advection equation and its discretization}\label{sec:advection}

We use the linear advection equation to study the accuracy and stability of the discretizations that use the point-cloud operators.  In two dimensions, the initial-boundary-value problem for linear advection can be written in skew-symmetric form as shown below:
\begin{alignat}{2}
  &\frac{\partial \fnc{U}}{\partial t} + {\textstyle \frac{1}{2}} \bm{\lambda} \cdot \nabla \fnc{U} + {\textstyle \frac{1}{2}} \nabla \cdot \big( \bm{\lambda} \fnc{U} \big) = 0, & \qquad \forall\; \bm{x} &\in \Omega, t \in [0,T], \label{eq:advection} \\
  &\fnc{U}(\bm{x},0) = \fnc{U}_0(\bm{x}), & \qquad \forall\; \bm{x} &\in \Omega, \label{eq:initial_condition} \\
  &\fnc{U}(\bm{x},t) = \fnc{G}(\bm{x},t), & \qquad \forall\; \bm{x} &\in \{ \Gamma \;|\; \bm{\lambda}^T \hat{\bm{n}} \leq 0 \}, t \in [0,T], \label{eq:boundary_condition}
\end{alignat}
where $\bm{\lambda} = [\lambda_x(\bm{x}), \lambda_y(\bm{x})]^T$ is the spatially-varying \emph{and} divergence-free advection velocity.  For the accuracy studies we consider the steady version of \eqref{eq:advection} and drop the initial condition~\eqref{eq:initial_condition}.

We summarize the SBP discretization of \eqref{eq:advection} for completeness, but we will not analyze the discretization formally; beyond the operators' construction, there is nothing novel about the discretization we use.  For proofs of stability, conservation, and accuracy, we direct the reader to the extensive literature.  For example, the situation considered here is similar to the divergence-free linear advection discretization described in \cite{Fernandez2017simultaneous}.

Suppose $\mat{D}_x = \mat{M}^{-1} (\mat{S}_x + \frac{1}{2} \mat{E}_x)$ and $\mat{D}_y = \mat{M}^{-1} (\mat{S}_y + \frac{1}{2} \mat{E}_y)$ are diagonal-norm SBP first-derivative operators corresponding to a given node set $X$ and mesh $\{\Omega^{\cell} \}_{\cell=1}^{N_C}$.  Furthermore, let $\mat{M}^{-1}\mat{A}$ be a dissipation operator on the same set of nodes. Then an SBP semi-discretization of \eqref{eq:advection} is 
\begin{multline}\label{eq:advection_sbp}
  \frac{d \bm{u}}{dt} + \frac{1}{2} \mat{\Lambda}_x \mat{D}_x \bm{u} + \frac{1}{2} \mat{D}_x \mat{\Lambda}_x \bm{u} + \frac{1}{2} \mat{\Lambda}_y \mat{D}_y \bm{u} + \frac{1}{2} \mat{D}_y \mat{\Lambda}_y \bm{u} \\ = -\mat{M}^{-1} \mat{A} \bm{u} + \frac{1}{2}\mat{M}^{-1} \Big[ \mat{E}_x \mat{\Lambda}_x \bm{u} + \mat{E}_y \mat{\Lambda}_y \bm{u} - \sum_{f\in F_B} \big(\mat{R}^{\face}\big)^T \; \mat{B}^{\face} \; \bm{f}^{\face}(\bm{u}) \Big]
\end{multline}
where $\bm{u} \in \mathbb{R}^{N}$ is the semi-discrete solution vector, and 
\begin{equation*}
  \mat{\Lambda}_x = \diag\big(\lambda_{x,1}, \lambda_{x,2}, \ldots, \lambda_{x,N} \big), \qquad\text{and}\qquad
  \mat{\Lambda}_y = \diag\big(\lambda_{y,1}, \lambda_{y,2}, \ldots, \lambda_{y,N} \big).
\end{equation*}
The vector-valued function $\bm{f}^{\face}(\bm{u})$ in \eqref{eq:advection_sbp} is the numerical flux for quadrature nodes on face $\face \in F_{B}$.  At the $i$th quadrature point of face $\face$, the numerical flux is given by
\begin{equation*}
[\bm{f}^{\face}(\bm{u})]_{i} \equiv
\begin{cases}
  \lambda_{n,i} \big[ \mat{R}^{\face} \bm{u} \big]_i & \text{if}\; \lambda_{n,i} > 0, \\
  \lambda_{n,i} \fnc{G}(\bm{x}_i,t), & \text{if}\; \lambda_{n,i} \leq 0,
\end{cases}
\end{equation*}
where $\lambda_{n,i} = \bm{\lambda}(\bm{x}_i)^T \hat{\bm{n}}_i$ is the component of the velocity vector parallel to $\hat{\bm{n}}_{i}$.

For unsteady problems we use the classical fourth-order Runge-Kutta method to solve~\eqref{eq:advection_sbp}.  For steady problems we use Julia's native sparse direct solver (\ie SuiteSparse~\cite{Davis2006direct}) to solve for $\bm{u}$.

\subsection{Diagonal mass-matrix studies}\label{sec:norm_studies}

This section focuses on characterizing the diagonal mass matrix and its construction.  We begin by studying the distribution of entries in $\mat{M}$ before and after solving the linear inequality \eqref{eq:norm_inequality}.  Subsequently, we investigate the success rate for finding a positive-definite $\mat{M}$ for a large sample of geometries and nodes.  Finally, we verify the accuracy of $\mat{M} = \diag(\bm{m})$ for numerical integration.

We will use ``quadrature weight'' and ``diagonal entry of $\mat{M}$'' interchangeably.  In addition, we will occasionally use $\bm{m}_{\min}$ when referring to the value of the weights before solving the linear inequality.

\subsubsection{Quadrature weight distribution}

The goal of this study is to better understand how the diagonal entries in $\mat{M}$ are distributed before and after solving the norm inequality.  We consider the annulus geometry $\Omega_{\text{ann}}$ with both quasi-uniform ($\beta=0.1$) and clustered ($\beta=4$) nodes.  For both node distributions, we use $n_r = 16$ and $n_{\theta} = 96$ nodes in the radial and angular directions, respectively.  Please refer to Sections~\ref{sec:geometries} and \ref{sec:node_distribution} for the further details regarding the geometry and node distribution.

\begin{remark}
The quasi-uniform nodes are not equidistributed --- despite being quasi-uniform in $(r,\theta)$ space --- because the nodes become more sparsely distributed as we move away from the center of the annulus.  Consequently, we should expect the quadrature weights to vary in the radial direction for both the $\beta=0.1$ and the $\beta=4$ cases.
\end{remark}

Figure~\ref{fig:uniform_norm_dist} shows the results for the quasi-uniform node distribution ($\beta=0.1$).  The figures in the first column depict $\mat{M}$ before the norm inequality is solved.  For an arbitrary node $i$, the quadrature weight $[\mat{M}]_{ii}$ is shown as a circle centered at the node's coordinate $\bm{x}_{i}$.  The size of the circle indicates the magnitude of the weight, while the fill indicates its sign: white for negative and colored for positive.  Here and throughout the results we will use red, blue, green and gray for polynomial degrees $p=1$, 2, 3, and 4, respectively.  

The mass matrices have many negative entries before optimization, and the number of negative entries grows with the degree $p$ (moving top to bottom in Figure~\ref{fig:uniform_norm_dist}).  The magnitude of the entries in $\bm{m}_{\min}$ appears to decrease somewhat away from the boundaries, particularly for the higher order mass matrices, but no other pattern is discernible. 

The figures in the center column of Figure~\ref{fig:uniform_norm_dist} depict the weights after enforcing the norm inequality.  There are no white circles in Figures \ref{fig:norm_after_uni_1}, \ref{fig:norm_after_uni_2}, \ref{fig:norm_after_uni_3}, and \ref{fig:norm_after_uni_4}.  The weights also show a clear pattern, increasing as we move from nodes near the inner radius to nodes near the outer radius.

The histograms in the rightmost column of Figure~\ref{fig:uniform_norm_dist} show the distribution of quadrature weights before and after solving the linear inequality.  Note the use of a log scale for the vertical (count) axis.  As the polynomial degree increases, the distribution of $\bm{m}_{\min}$ shifts increasingly to the left (\ie negative).  After solving the norm inequality, the distributions have a smaller spread of values, and they have all shifted to the right of the vertical axis as expected.

\begin{figure}[p]
  \begin{subfigure}[t]{0.32\textwidth}
    \centering
    \includegraphics[width=0.85\textwidth]{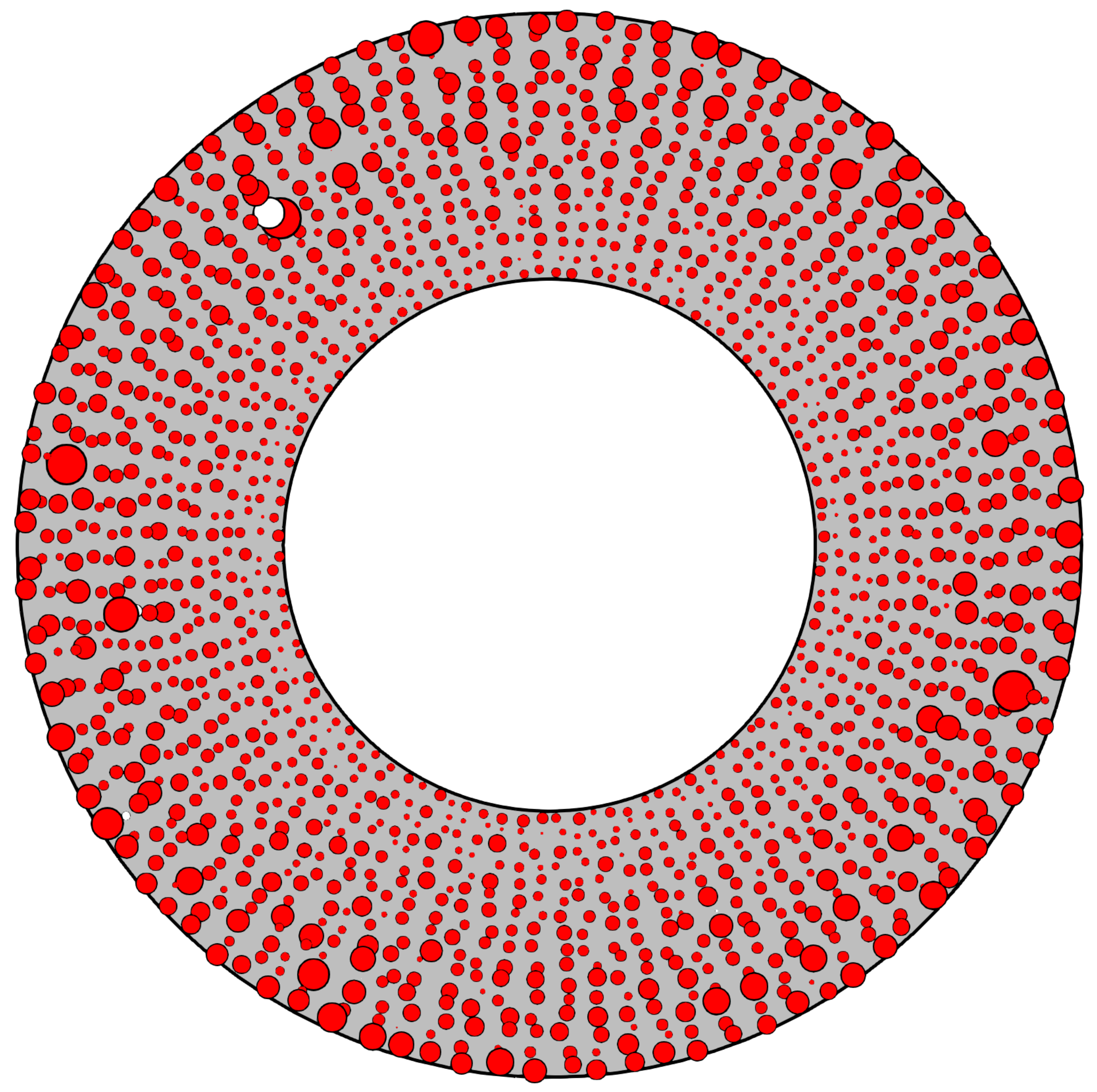}
    \caption{\small $p=1$ weights before} \label{fig:norm_before_uni_1}
  \end{subfigure}%
  \hfill%
  \begin{subfigure}[t]{0.32\textwidth}
    \centering
    \includegraphics[width=0.85\textwidth]{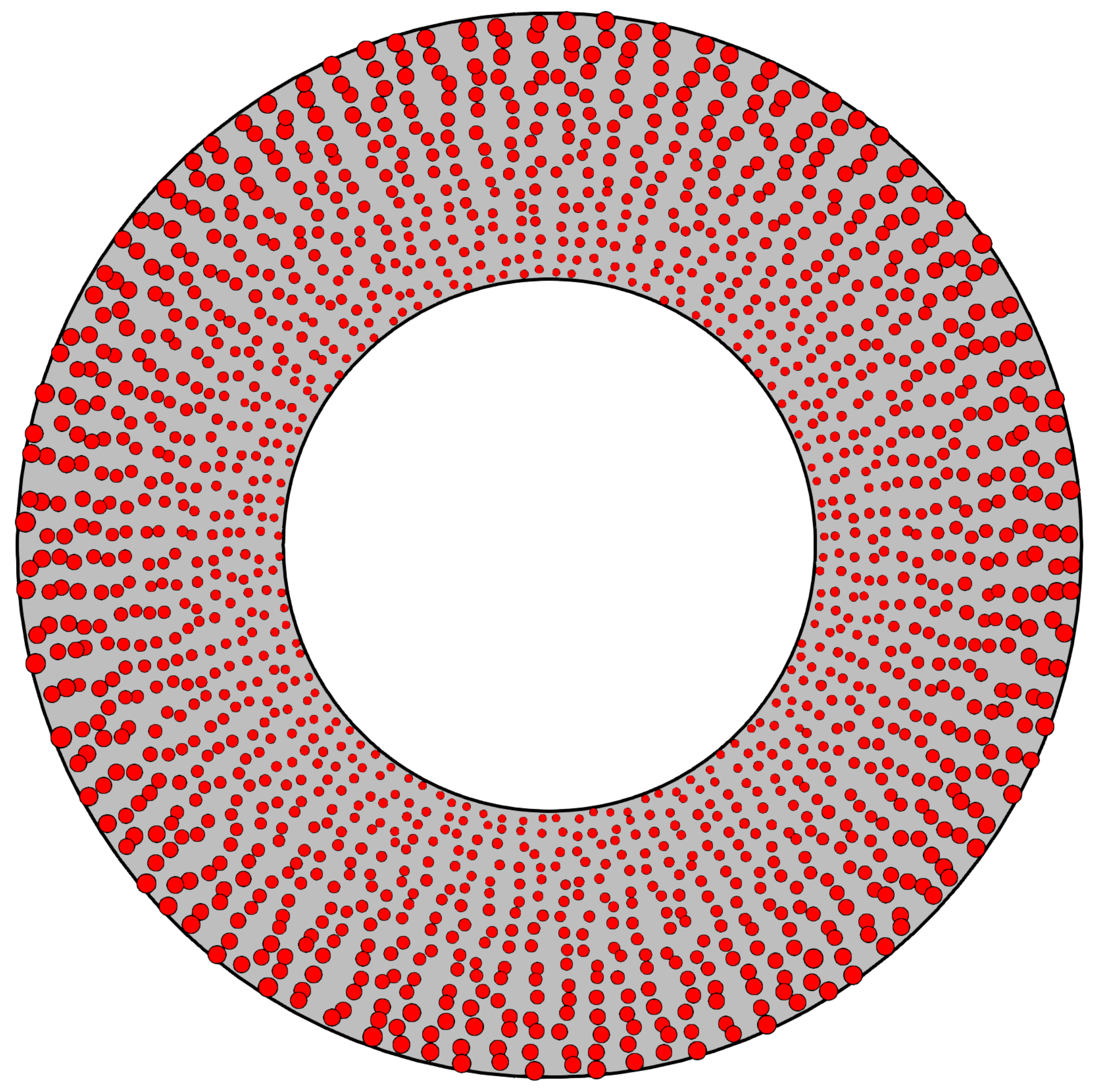}
    \caption{\small $p=1$ weights after} \label{fig:norm_after_uni_1}
  \end{subfigure}%
  \hfill%
  \begin{subfigure}[t]{0.32\textwidth}
    \centering
    \includegraphics[width=0.9\textwidth]{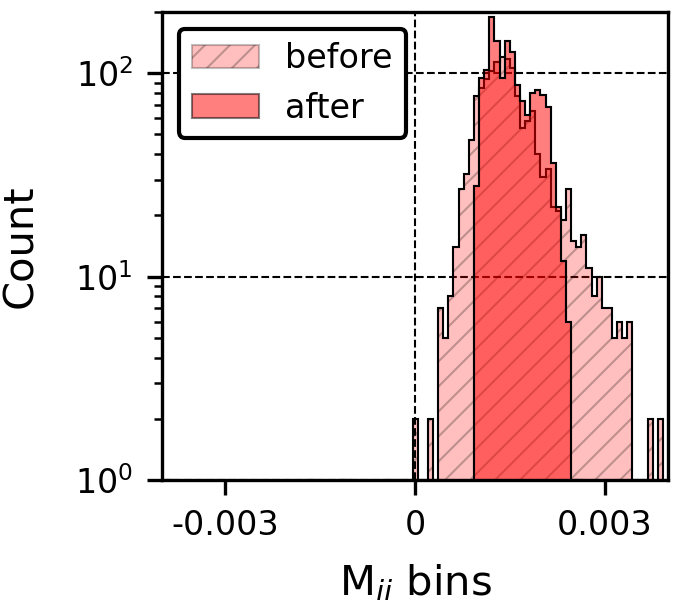}
    \caption{\small $p=1$ distribution} \label{fig:norm_dist_uni_1}
  \end{subfigure}\\%
  \begin{subfigure}[t]{0.32\textwidth}
    \centering
    \includegraphics[width=0.85\textwidth]{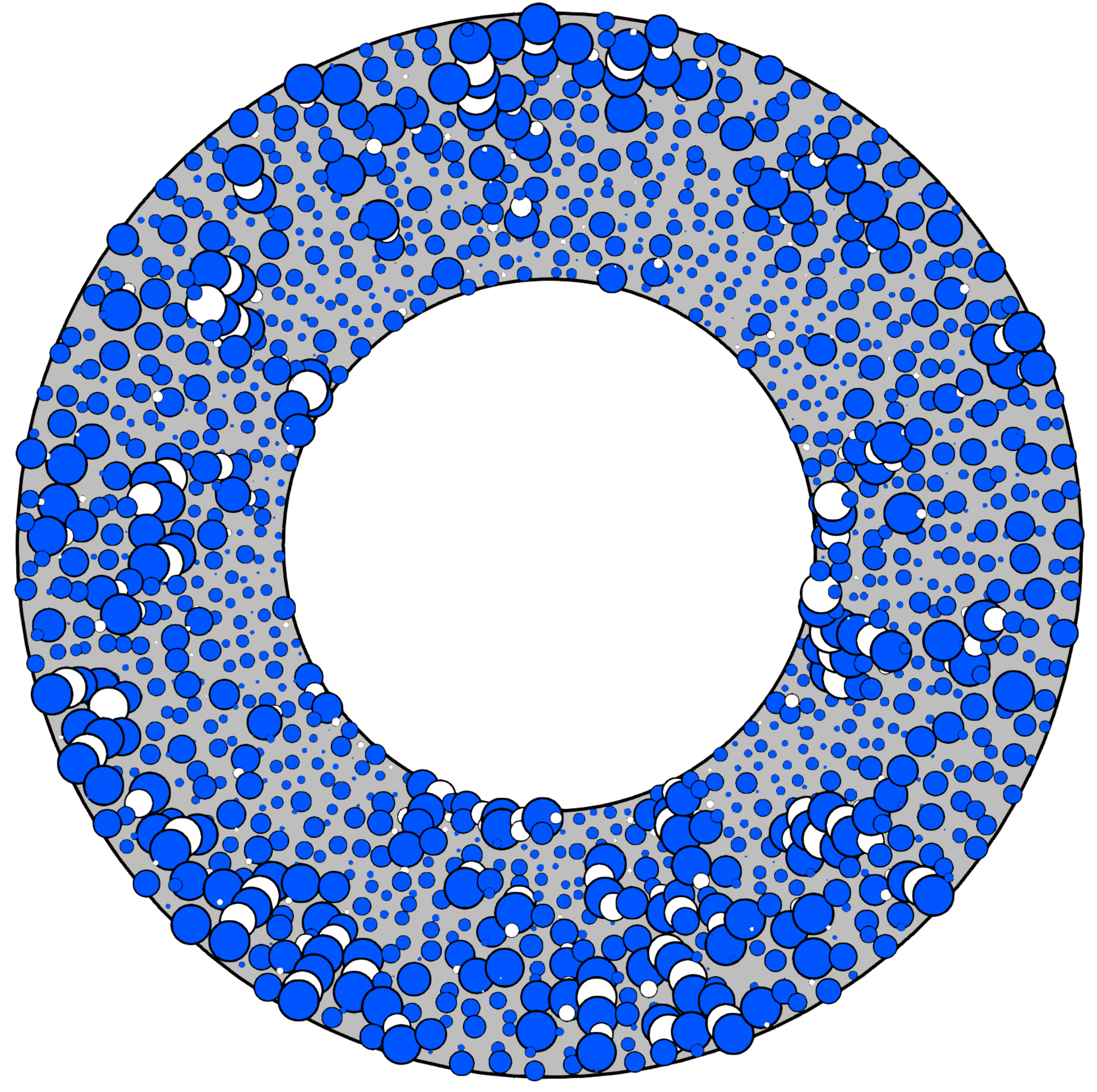}
    \caption{\small $p=2$ weights before} \label{fig:norm_before_uni_2}
  \end{subfigure}%
  \hfill%
  \begin{subfigure}[t]{0.32\textwidth}
    \centering
    \includegraphics[width=0.85\textwidth]{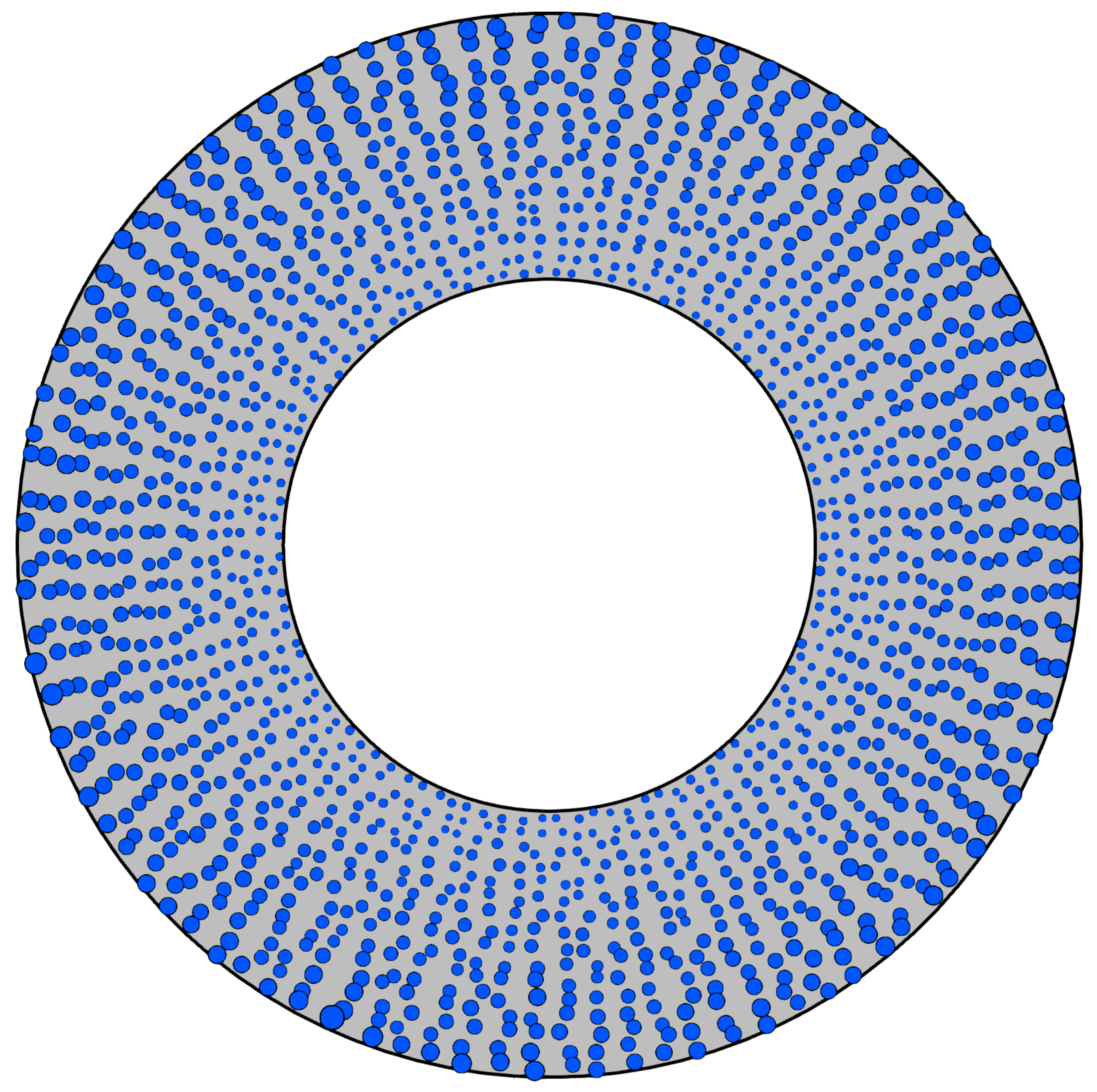}
    \caption{\small $p=2$ weights after} \label{fig:norm_after_uni_2}
  \end{subfigure}%
  \hfill%
  \begin{subfigure}[t]{0.32\textwidth}
    \centering
    \includegraphics[width=0.9\textwidth]{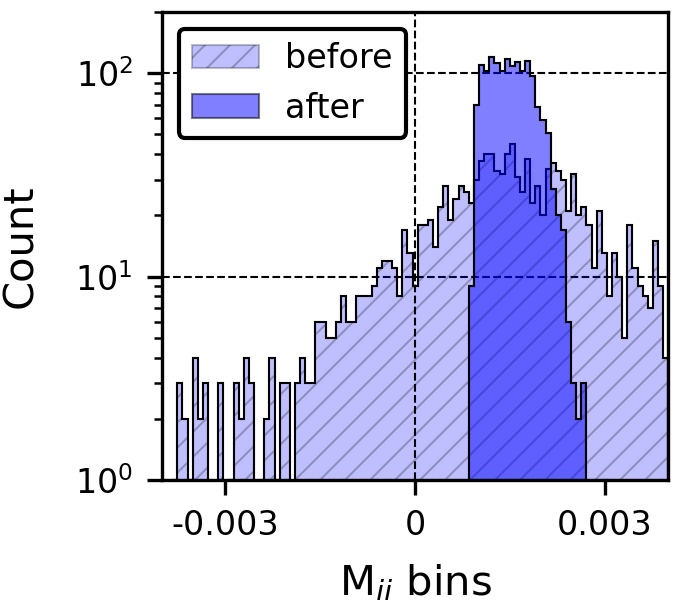}
    \caption{\small $p=2$ distribution} \label{fig:norm_dist_uni_2}
  \end{subfigure}\\%
  \begin{subfigure}[t]{0.32\textwidth}
    \centering
    \includegraphics[width=0.85\textwidth]{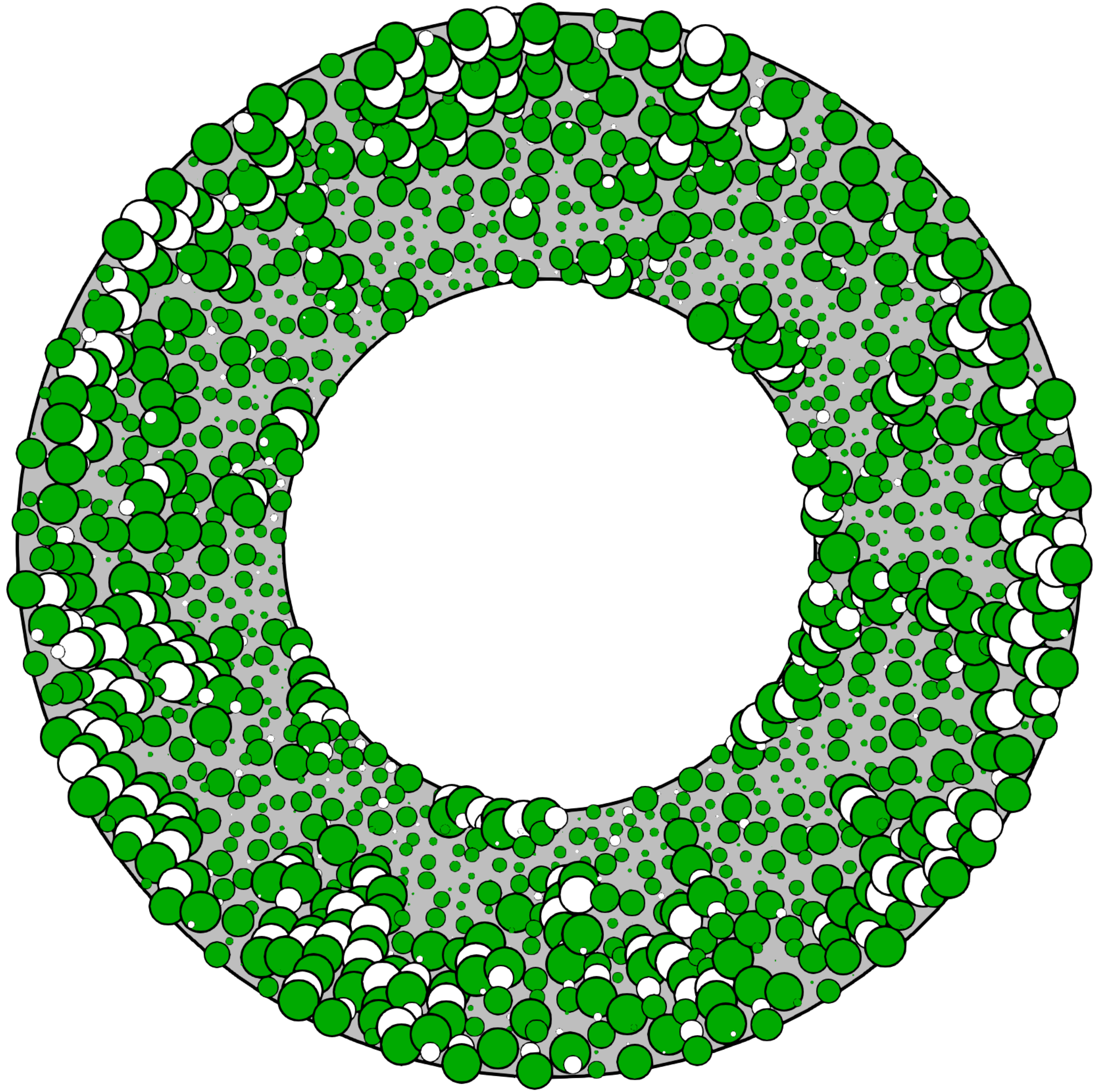}
    \caption{\small $p=3$ weights before} \label{fig:norm_before_uni_3}
  \end{subfigure}%
  \hfill%
  \begin{subfigure}[t]{0.32\textwidth}
    \centering
    \includegraphics[width=0.85\textwidth]{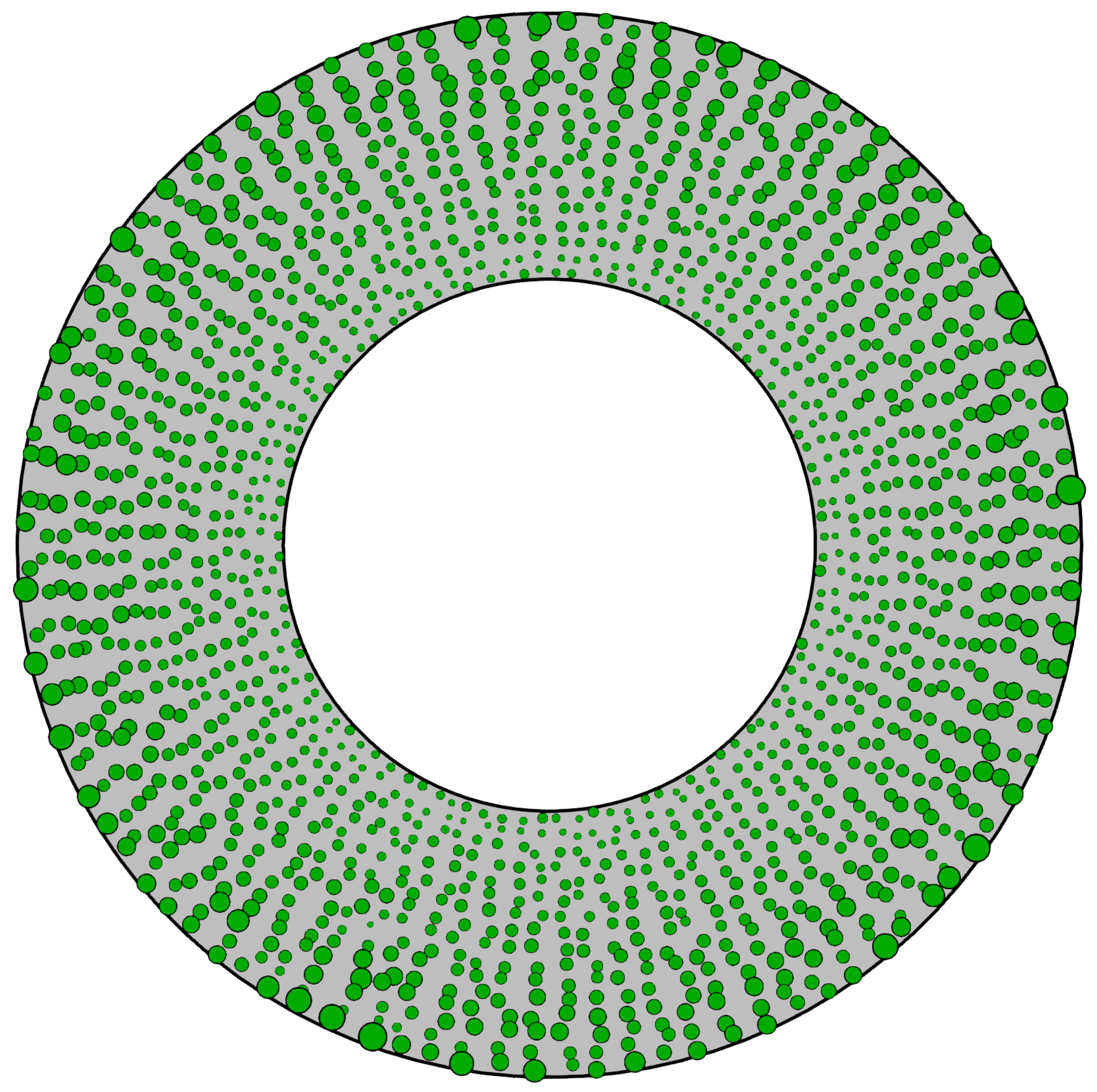}
    \caption{\small $p=3$ weights after} \label{fig:norm_after_uni_3}
  \end{subfigure}%
  \hfill%
  \begin{subfigure}[t]{0.32\textwidth}
    \centering
    \includegraphics[width=0.9\textwidth]{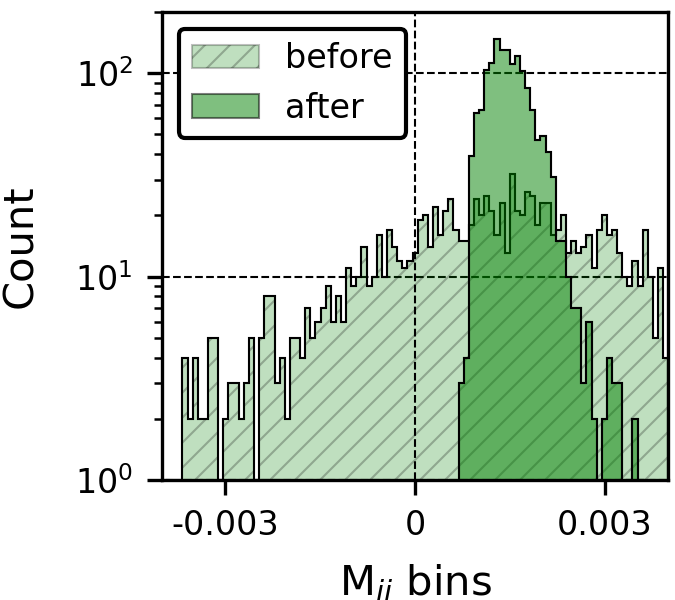}
    \caption{\small $p=3$ distribution} \label{fig:norm_dist_uni_3}
  \end{subfigure}\\%
  \begin{subfigure}[t]{0.32\textwidth}
    \centering
    \includegraphics[width=0.85\textwidth]{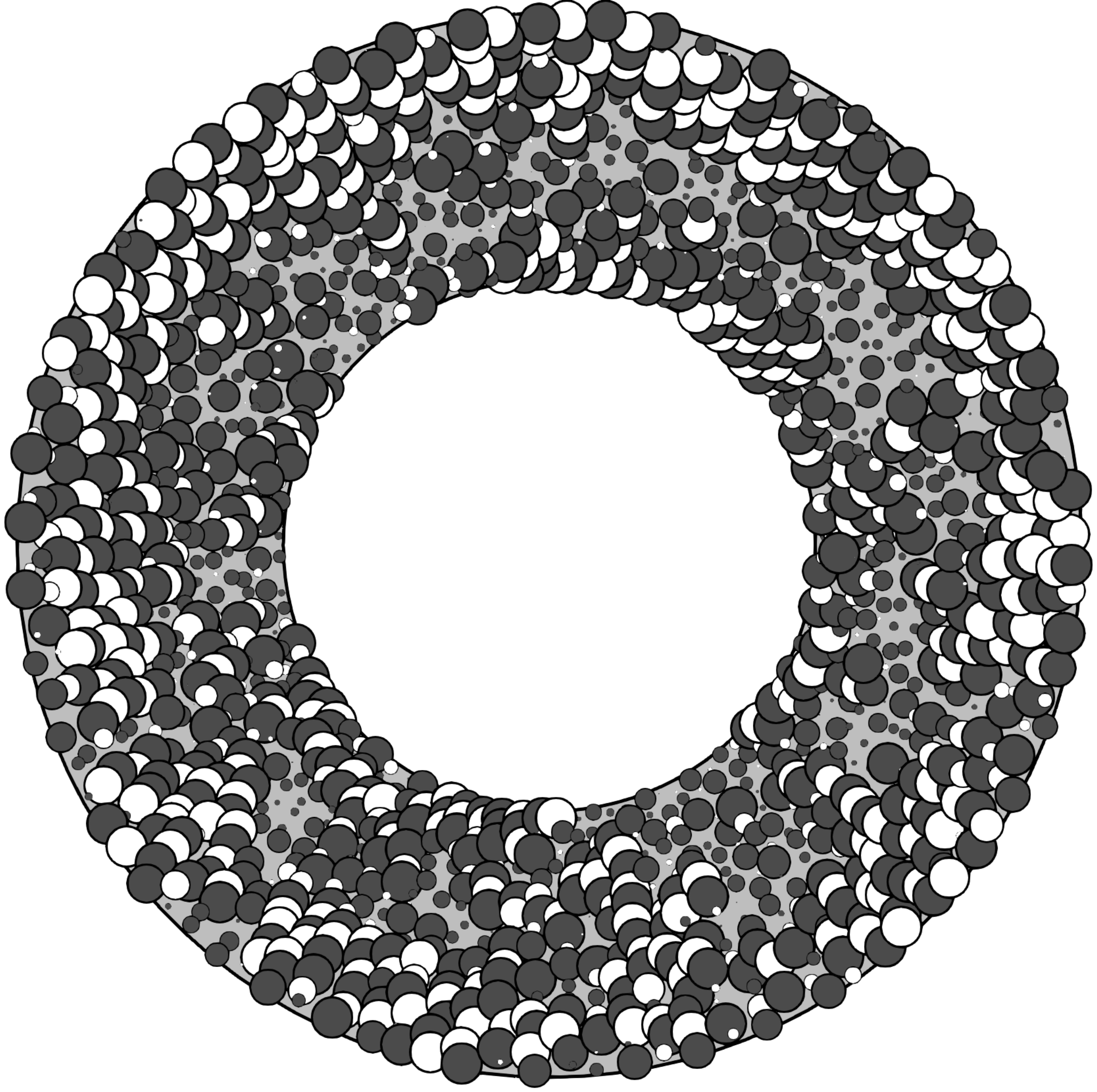}
    \caption{\small $p=4$ weights before} \label{fig:norm_before_uni_4}
  \end{subfigure}%
  \hfill%
  \begin{subfigure}[t]{0.32\textwidth}
    \centering
    \includegraphics[width=0.85\textwidth]{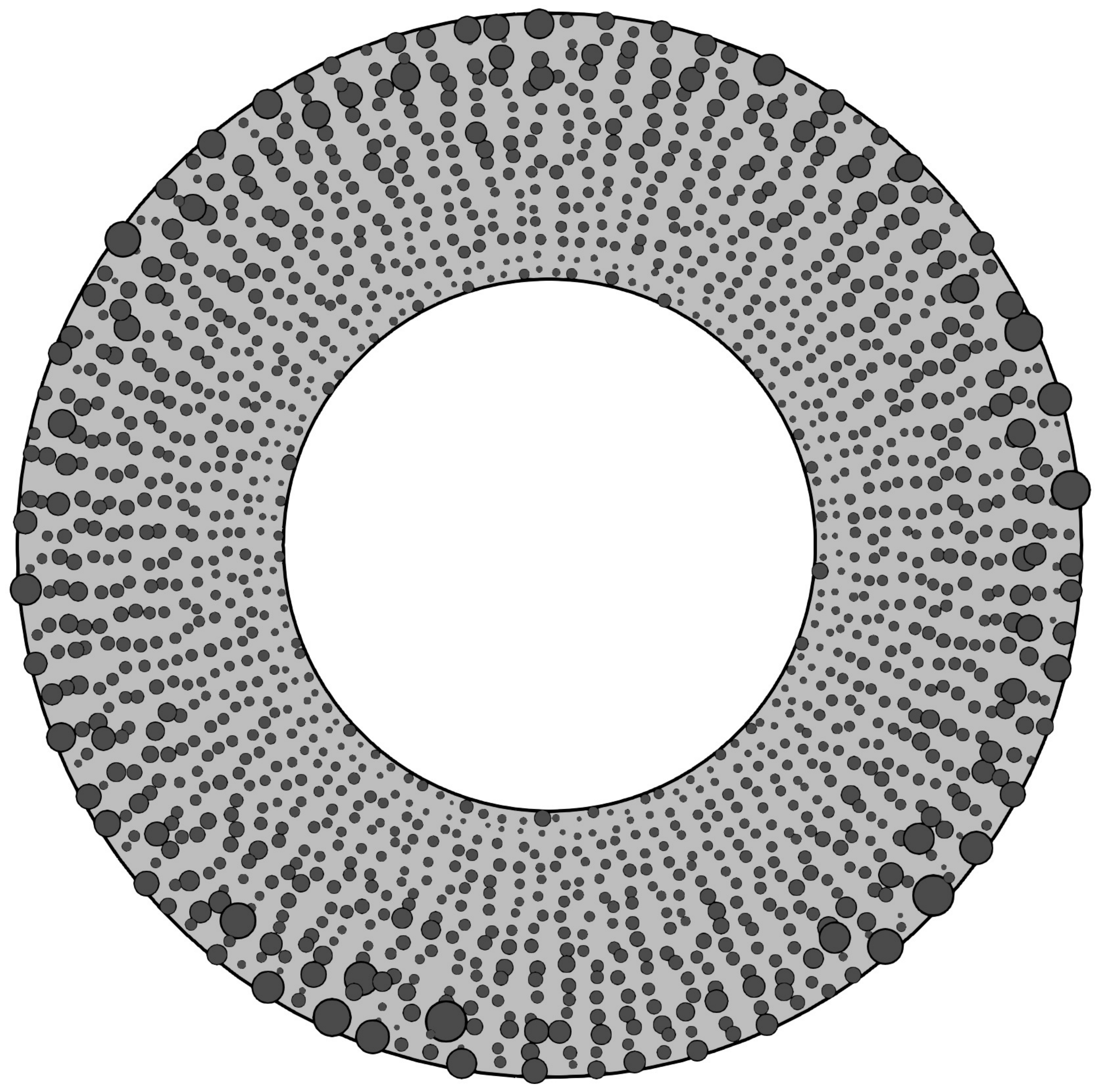}
    \caption{\small $p=4$ weights after} \label{fig:norm_after_uni_4}
  \end{subfigure}%
  \hfill%
  \begin{subfigure}[t]{0.32\textwidth}
    \centering
    \includegraphics[width=0.9\textwidth]{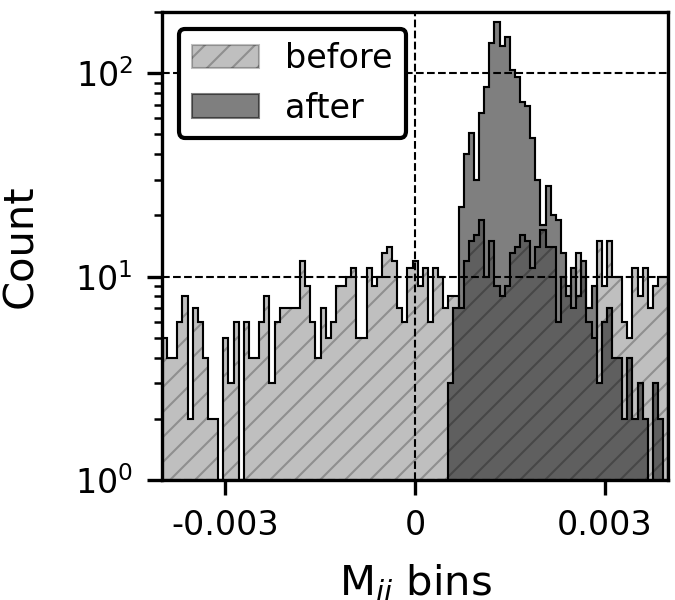}
    \caption{\small $p=4$ distribution} \label{fig:norm_dist_uni_4}
  \end{subfigure}%
  \caption{\small Quadrature-weight study for the annulus with \emph{quasi-uniform} nodes.  The left and center columns show the weights before and after solving the linear inequality, respectively.  The histograms in the right column show the distribution of the entries.} \label{fig:uniform_norm_dist}
\end{figure}

We repeat the study using $\beta=4$, which clusters the nodes toward the inner radius.  The corresponding results are shown in Figure~\ref{fig:nonuniform_norm_dist}.  Comparing with the quasi-uniform results, clustering the nodes increases the number of negative entries in $\bm{m}_{\min}$.  Nevertheless, the norm inequality was still successfully enforced in all cases.

For degrees $p > 1$, the weights after enforcing the norm inequality show some oscillation in magnitude toward the outer radius.  While this oscillation is obvious for the clustered nodes, it can also be observed in the quasi-uniform results.  We note that ``oscillations'' are also seen in the norm of classical SBP operators and Newton-Cotes quadrature formulae.

\begin{figure}[p]
  \begin{subfigure}[t]{0.32\textwidth}
    \centering
    \includegraphics[width=0.85\textwidth]{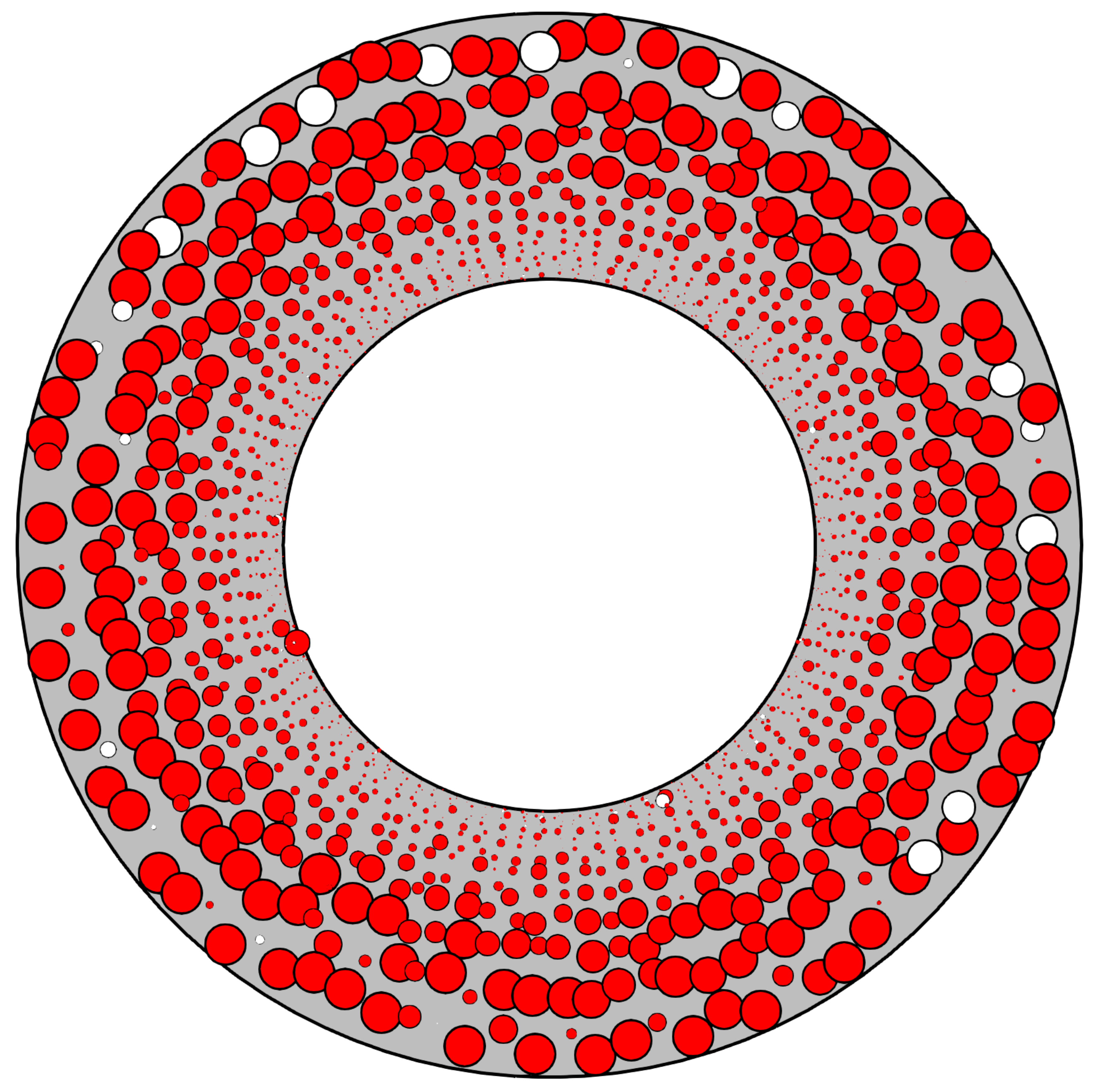}
    \caption{\small $p=1$ weights before} \label{fig:norm_before _nonuni_1}
  \end{subfigure}%
  \hfill%
  \begin{subfigure}[t]{0.32\textwidth}
    \centering
    \includegraphics[width=0.85\textwidth]{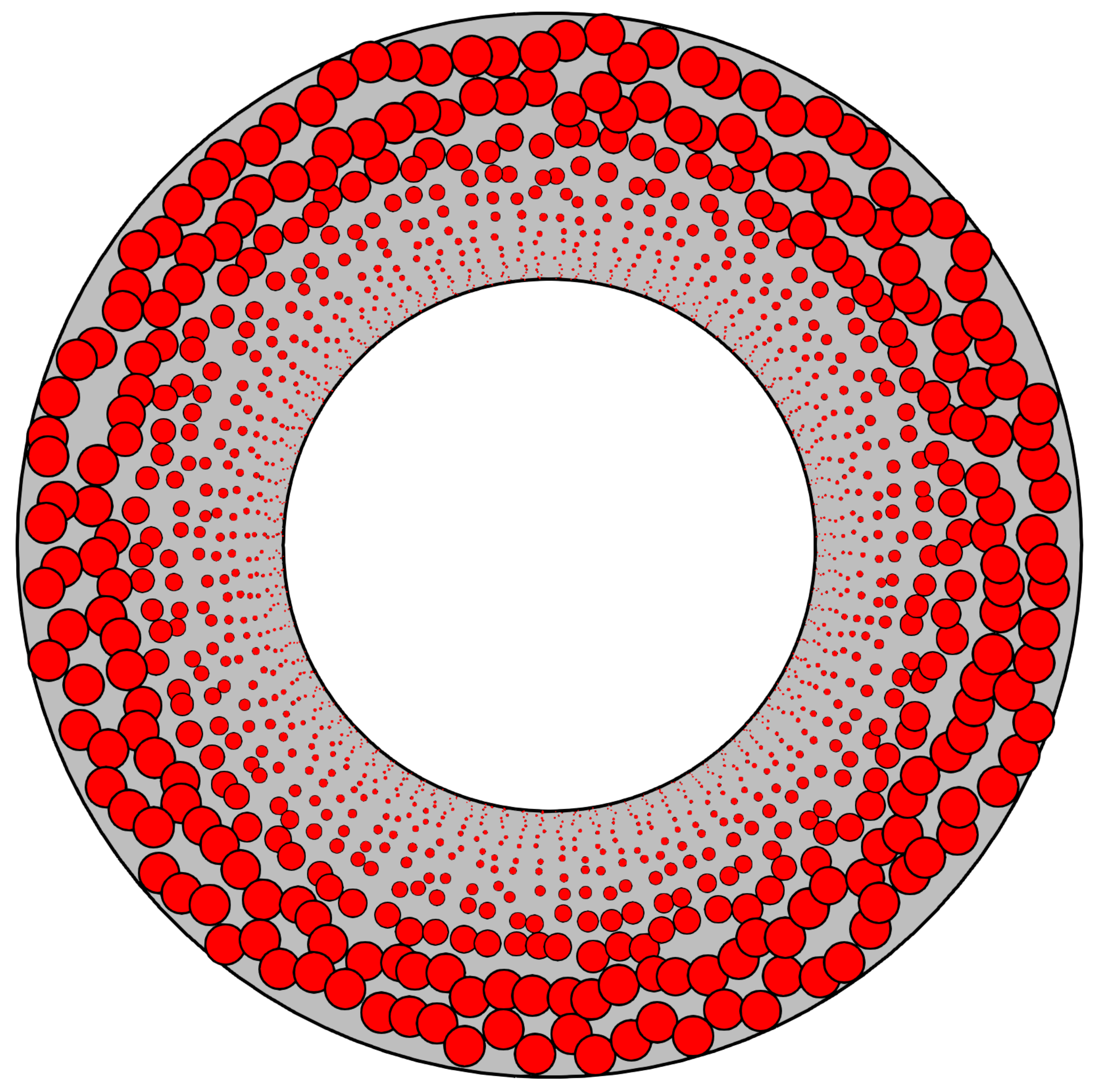}
    \caption{\small $p=1$ weights after} \label{fig:norm_after _nonuni_1}
  \end{subfigure}%
  \hfill%
  \begin{subfigure}[t]{0.32\textwidth}
    \centering
    \includegraphics[width=0.9\textwidth]{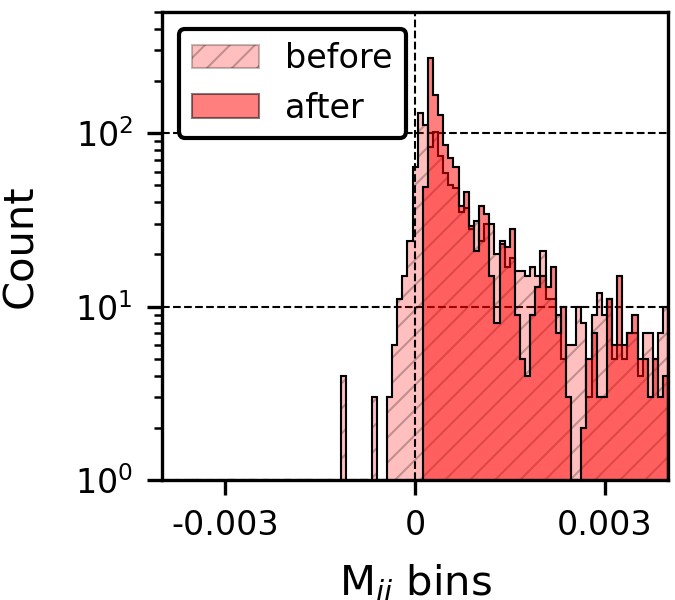}
    \caption{\small $p=1$ distribution} \label{fig:norm_dist _nonuni_1}
  \end{subfigure}\\%
  \begin{subfigure}[t]{0.32\textwidth}
    \centering
    \includegraphics[width=0.85\textwidth]{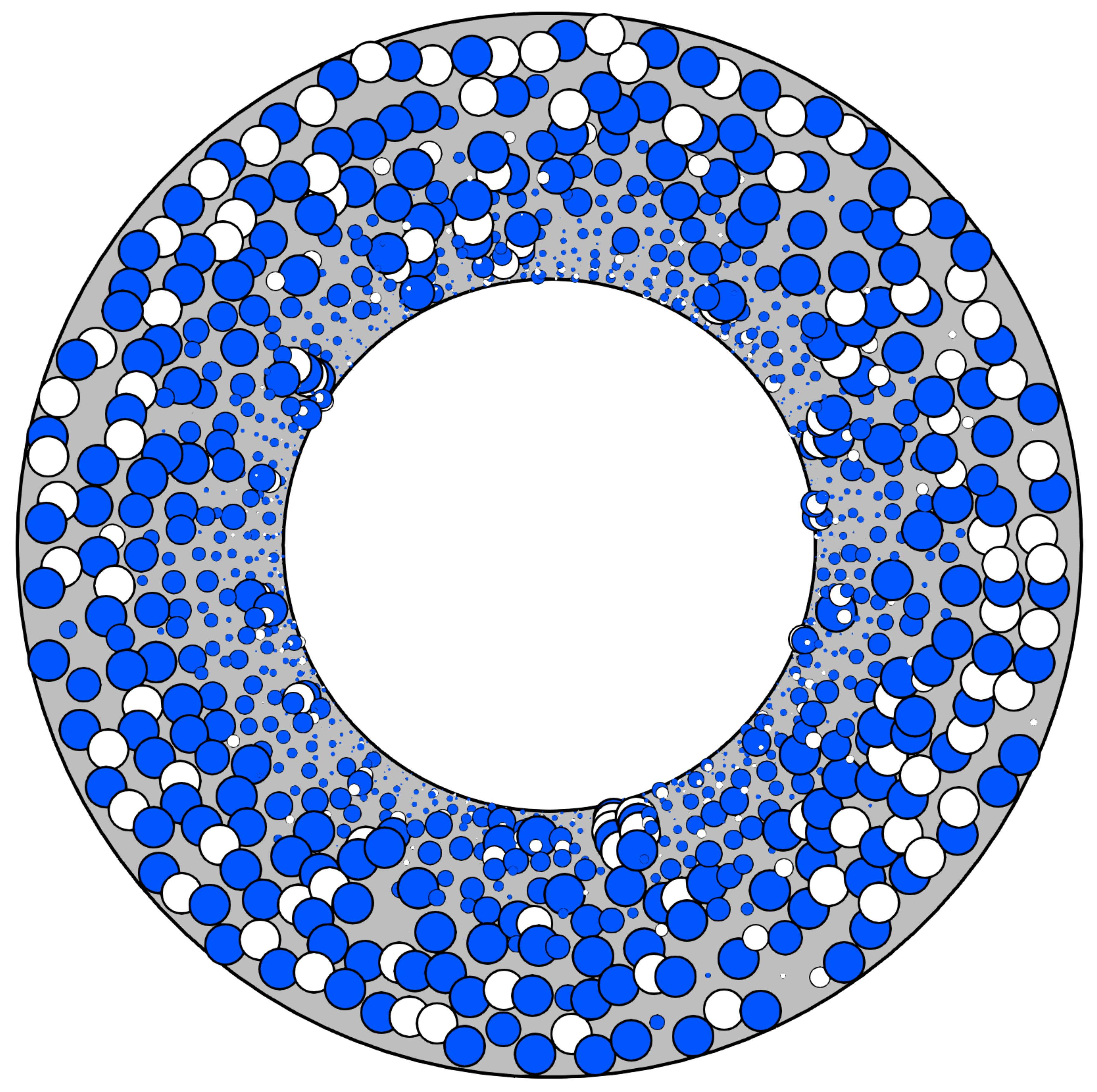}
    \caption{\small $p=2$ weights before} \label{fig:norm_before _nonuni_2}
  \end{subfigure}%
  \hfill%
  \begin{subfigure}[t]{0.32\textwidth}
    \centering
    \includegraphics[width=0.85\textwidth]{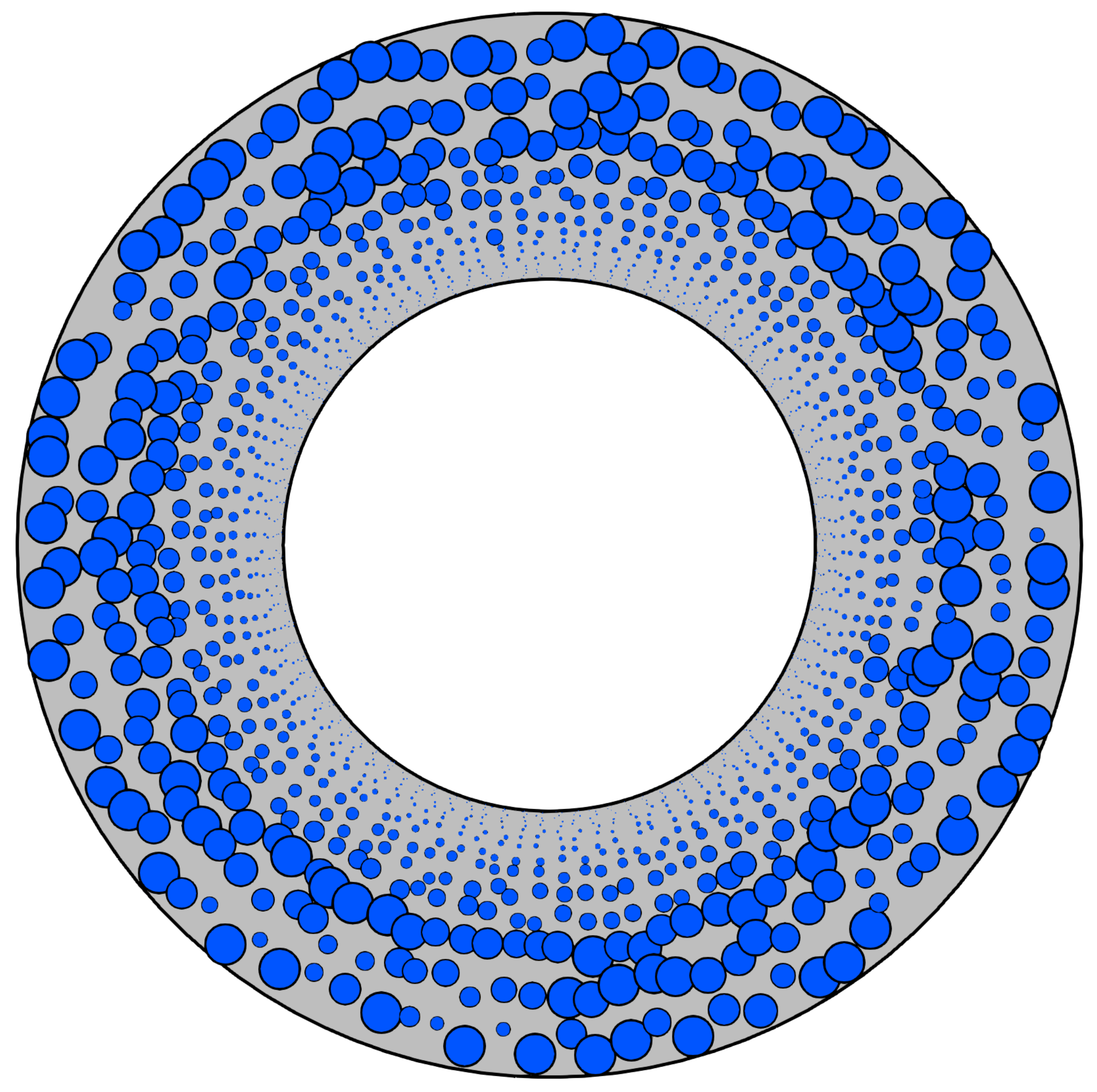}
    \caption{\small $p=2$ weights after} \label{fig:norm_after _nonuni_2}
  \end{subfigure}%
  \hfill%
  \begin{subfigure}[t]{0.32\textwidth}
    \centering
    \includegraphics[width=0.9\textwidth]{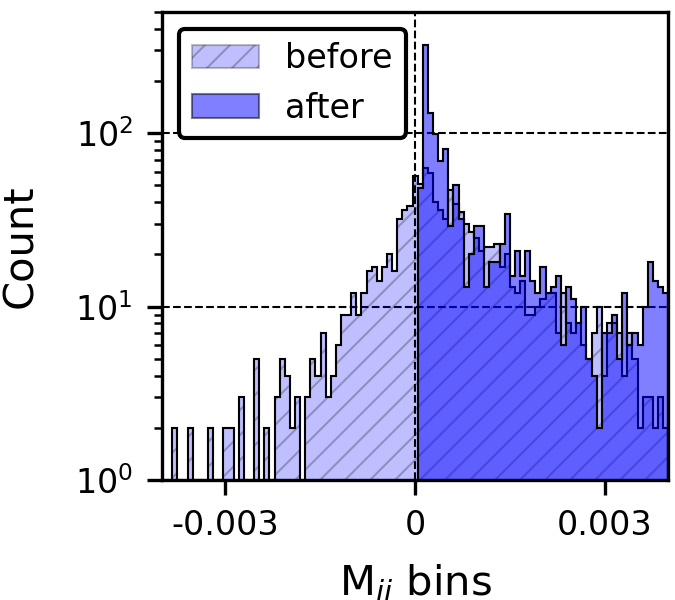}
    \caption{\small $p=2$ distribution} \label{fig:norm_dist _nonuni_2}
  \end{subfigure}\\%
  \begin{subfigure}[t]{0.32\textwidth}
    \centering
    \includegraphics[width=0.85\textwidth]{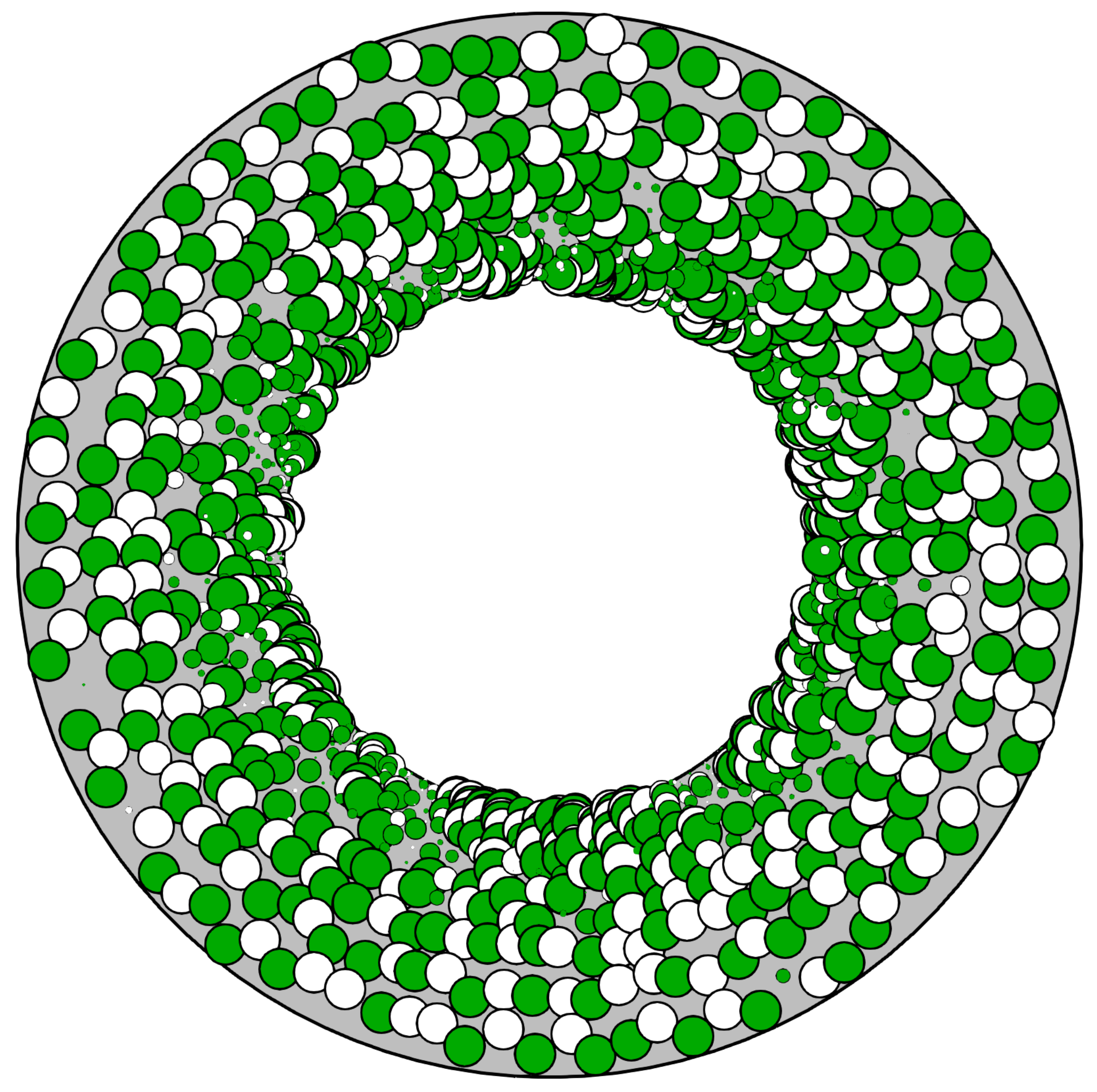}
    \caption{\small $p=3$ weights before} \label{fig:norm_before _nonuni_3}
  \end{subfigure}%
  \hfill%
  \begin{subfigure}[t]{0.32\textwidth}
    \centering
    \includegraphics[width=0.85\textwidth]{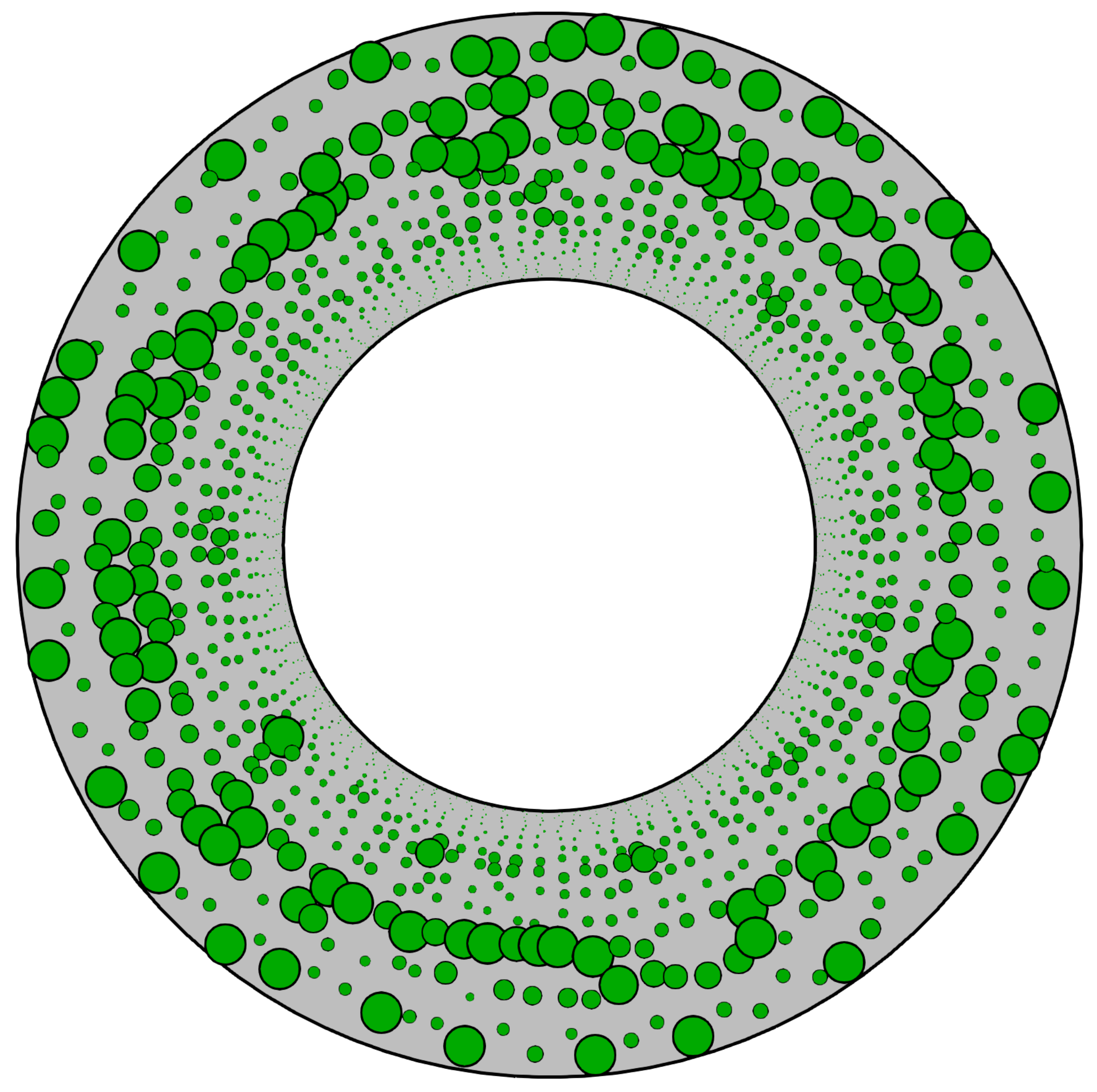}
    \caption{\small $p=3$ weights after} \label{fig:norm_after _nonuni_3}
  \end{subfigure}%
  \hfill%
  \begin{subfigure}[t]{0.32\textwidth}
    \centering
    \includegraphics[width=0.9\textwidth]{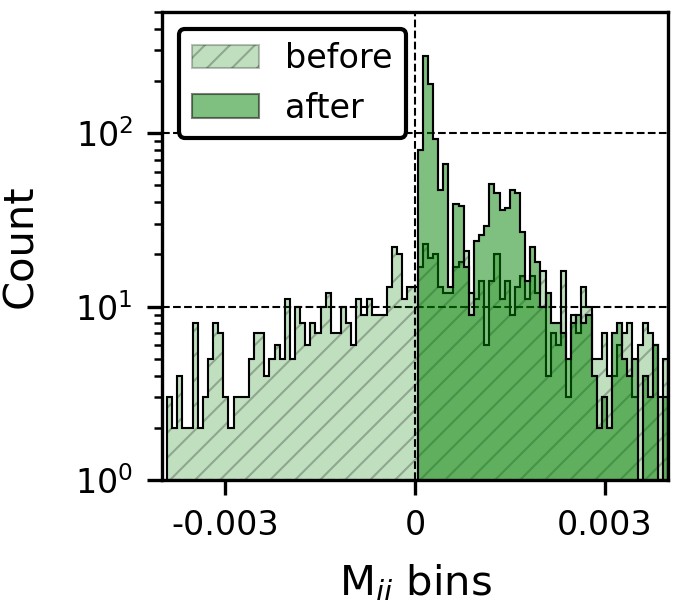}
    \caption{\small $p=3$ distribution} \label{fig:norm_dist _nonuni_3}
  \end{subfigure}\\%
  \begin{subfigure}[t]{0.32\textwidth}
    \centering
    \includegraphics[width=0.85\textwidth]{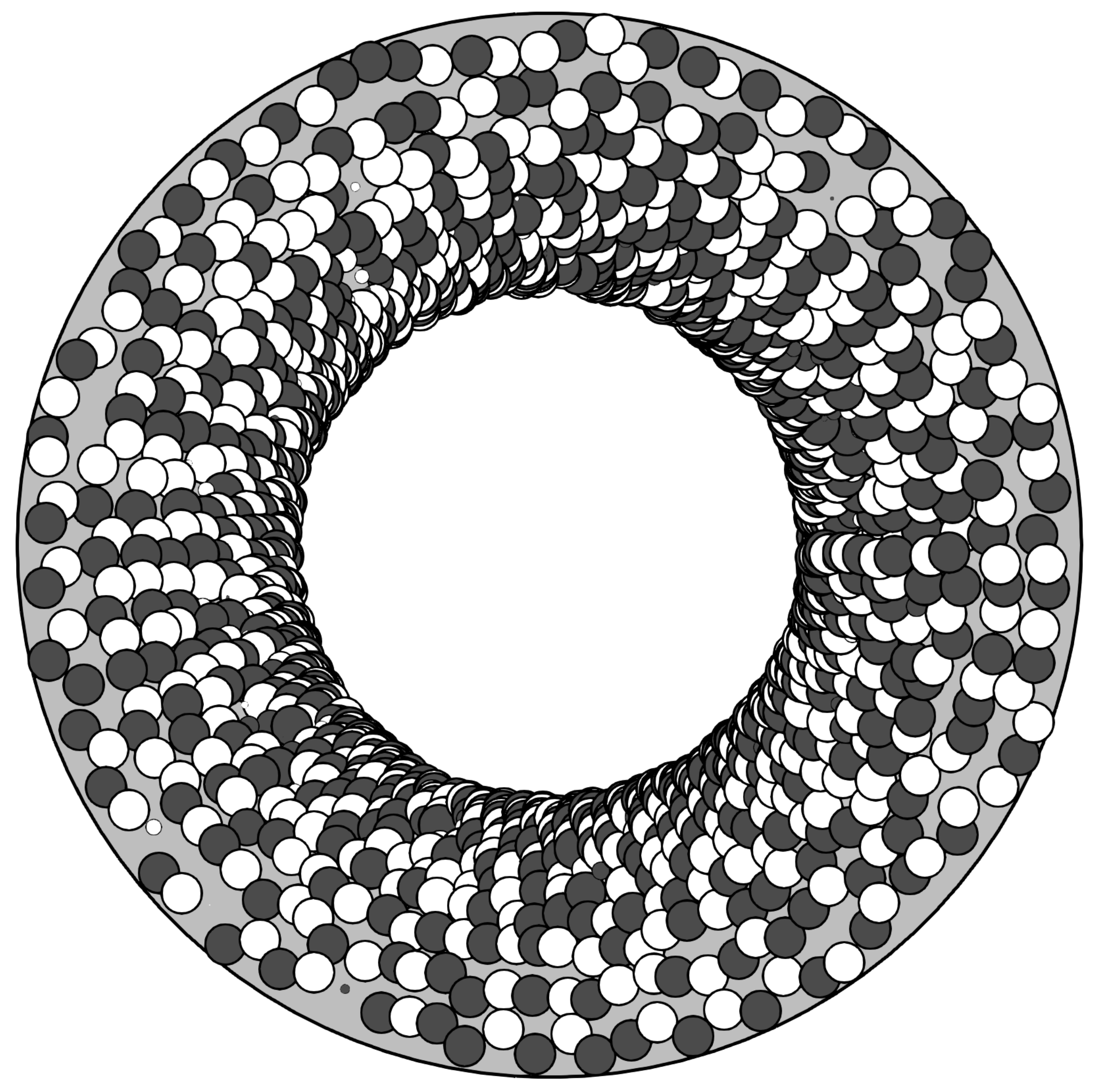}
    \caption{\small $p=4$ weights before} \label{fig:norm_before _nonuni_4}
  \end{subfigure}%
  \hfill%
  \begin{subfigure}[t]{0.32\textwidth}
    \centering
    \includegraphics[width=0.85\textwidth]{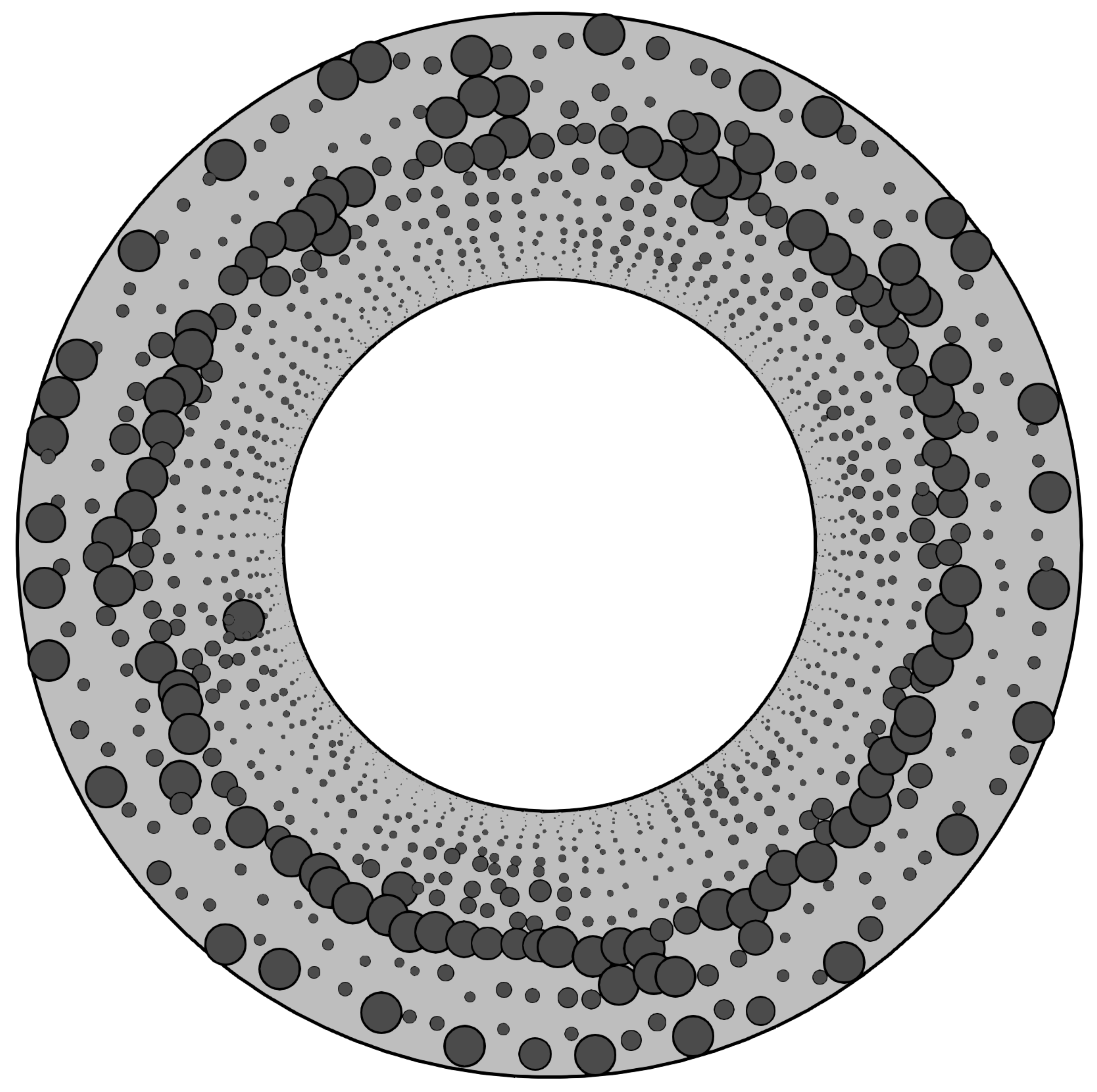}
    \caption{\small $p=4$ weights after} \label{fig:norm_after _nonuni_4}
  \end{subfigure}%
  \hfill%
  \begin{subfigure}[t]{0.32\textwidth}
    \centering
    \includegraphics[width=0.9\textwidth]{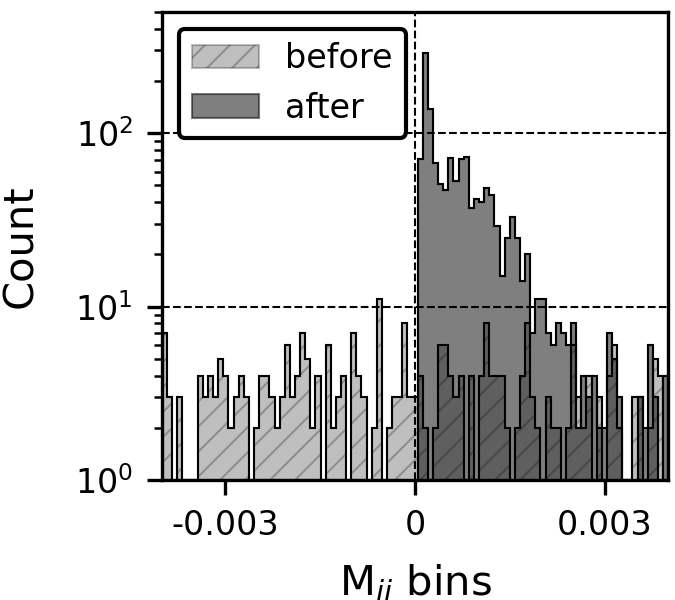}
    \caption{\small $p=4$ distribution} \label{fig:norm_dist _nonuni_4}
  \end{subfigure}%
  \caption{\small Quadrature-weight study for the annulus with \emph{non-uniform} nodes.  The left and center columns show the weights before and after solving the linear inequality, respectively.  The histograms in the right column show the distribution of the entries.} \label{fig:nonuniform_norm_dist}
\end{figure}

\subsubsection{Solution success rate}\label{sec:success_rate}

Recall that Theorem \ref{thm:feasibility} does not guarantee that the norm inequality has a feasible solution, in general.  It only implies that, for equidistributed $X_N$, there will be a feasible solution for sufficiently large $N$.  Thus, the study presented in this section seeks to characterize what ``sufficiently large'' means in practice.  The study also investigates the role that the tolerances $\tau_i$ play in determining feasibility.

To understand when the norm inequality fails to have a solution, we generate 1000 different geometries based on conic sections.  Specifically, for each sample $k$, the level-set is defined as 
\begin{equation*}
  \phi_k(\bm{x}) = 1 - \zeta_k \frac{x^2}{\xi_k} - \frac{y^2}{\eta_k},
\end{equation*}
where the realizations $\xi_k$ and $\eta_k$ are drawn from the uniform distribution $U[0.01, 0.99]$.  The parameter $\zeta_k \in \{-1,1\}$ selects elliptic ($\zeta_k = 1$) or hyperbolic ($\zeta_k = -1$) curves with equal probability.

The node generation process for each level-set $\phi_k(\bm{x})$ is similar to the one described in Section~\ref{sec:geometries} for $\Omega_{\text{box}}$ and $\Omega_{\text{foil}}$.  We first create a quasi-uniform node distribution over the box $[-1,1]^2$ with $n_x = n_y$ nodes in each coordinate direction.  We then add perturbations $\xi_i$ and $\eta_i$ to $x_i$ and $y_i$, respectively, where $\xi_i, \eta_i \sim U[-\Delta x/4, \Delta x/4]$ and $\Delta x = 2/n_x$.  Finally, we remove nodes $i$ for which $\phi_k(\bm{x}_i) < 0$; if this produces fewer nodes than the minimum required by Algorithm~\ref{alg:stencil}, we add one to $n_x$ and repeat the process.  For each geometry we consider four node resolutions corresponding to $n_x \in \{8, 16, 32, 64\}$.

As mentioned above, we are also interested in how the tolerances $\tau_{i}, i=1,2,\dots,N$ impact feasibility of the norm inequality.  Intuitively, $\tau_i$ should be smaller than the average node ``volume'' $\Delta x^2 = 4/n_x^2$; however, other than this upper bound on the $\tau_i$, the relationship between the tolerances and feasibility is not immediately obvious.  To help elucidate this relationship, we consider three values for $\tau_i$:  
\begin{equation*}
  \tau^{\text{large}} = \frac{1}{(n_x)^2},\qquad 
  \tau^{\text{small}} = \frac{1}{(10 n_x)^2},\qquad\text{and}\qquad
  \tau^{\text{tiny}} = \frac{1}{(100 n_x)^2}.
\end{equation*}
The large tolerance is one quarter of the average node ``volume,'' while the small and tiny tolerances are 1/400 and 1/40000 of the average, respectively.

Figure~\ref{fig:success_study_geos} shows some conic geometries generated for the success-rate study.  For $p=4$ on the coarsest node resolution ($n_x=8$) and $\tau_i = \tau_i^{\text{small}}$, the norm inequality was successfully solved on geometries that are colored blue, and the inequality was not successfully solved for the red geometries.  We emphasize that \emph{the coloring does not reflect the success for different $n_x$ or $p$}.  The figure is included primarily to illustrate the types of geometries that are generated and that there is no particular pattern in the failures for $p=4$ on the coarsest resolution.

\begin{figure}[tbp]
\begin{center}
  \includegraphics*[width=\textwidth]{./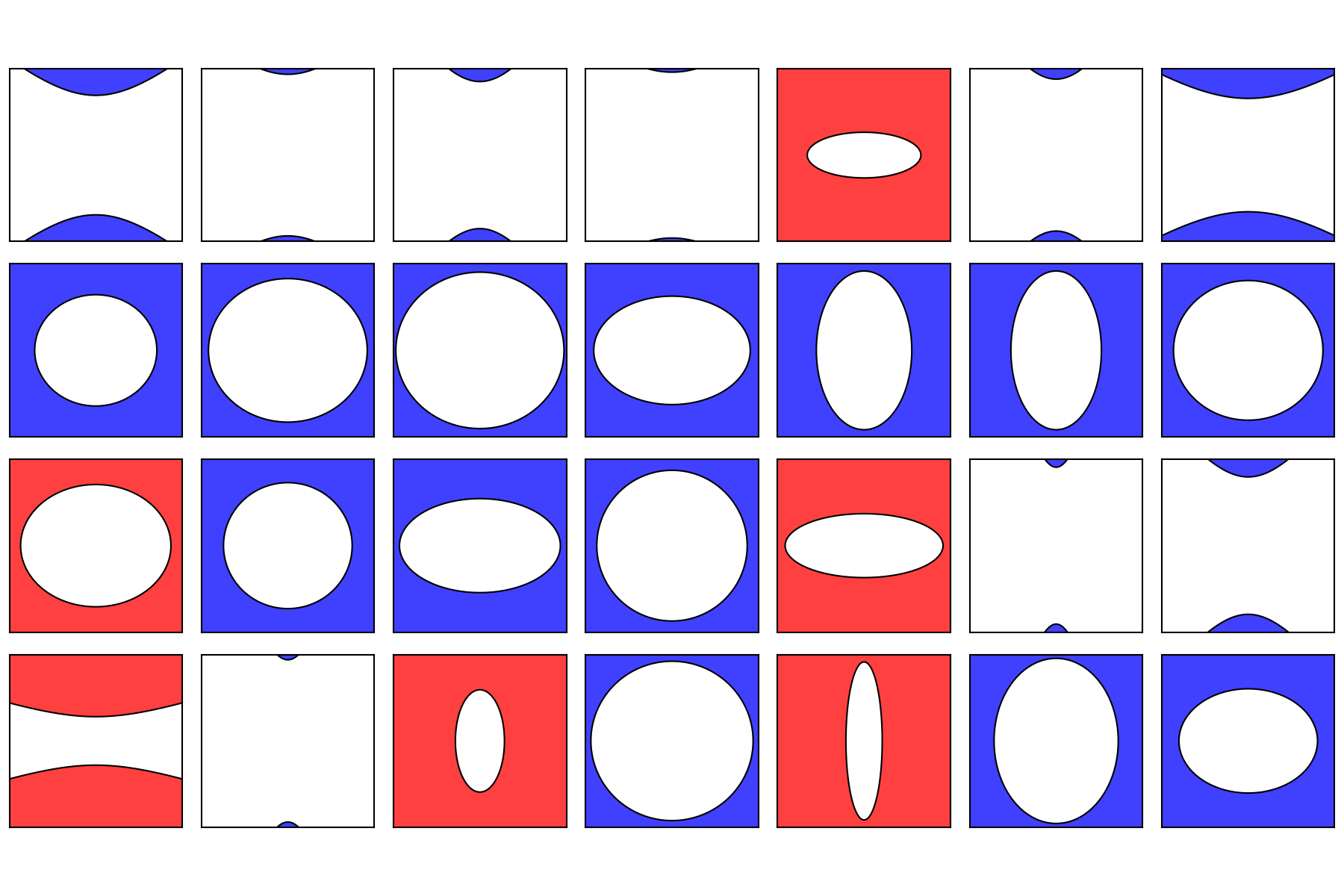}
  \caption[]{\small Example geometries used in the success-rate study; the white region indicates the domain $\Omega$.  Using $\tau_i = \tau^{\text{small}}$, the $p=4$ norm was positive-definite on the coarsest mesh for the blue geometries, and it was negative-definite on the coarsest mesh for red geometries.}\label{fig:success_study_geos}
\end{center}
\end{figure}

Table~\ref{tab:success_rate} lists the percentage of geometries for which the norm-inequality problem was successfully solved.  Each cell in the table is the success rate for a particular tolerance $\tau_i$, degree $p$, and resolution $n_x$ based on the 1000 sampled geometries.  Three trends emerge from the data in the table.
\begin{itemize}
\item For a fixed degree $p$ and node resolution $n_x$, decreasing the tolerance helps improve feasibility, up to a point.  In particular, we see a dramatic improvement in success rate in going from $\tau^{\text{large}}$ to $\tau^{\text{small}}$, but limited improvement in success rate when decreasing the tolerance from $\tau^{\text{small}}$ to $\tau^{\text{tiny}}$.
\item For a fixed tolerance and node resolution, the norm inequality becomes increasingly infeasible as $p$ grows.
\item For a fixed tolerance and degree $p$, the probability of solving the norm inequality increases with the node resolution.
\end{itemize}
The third trend is consistent with the conclusions of Theorem~\ref{thm:feasibility}.

This study considered only quasi-uniform nodes, and the success rate is likely to decrease for strongly anisotropic node distributions.  Nevertheless, regardless of the node distribution, this study and Theorem~\ref{thm:feasibility} suggest that adding nodes is a simple and effective remedy for an infeasible norm-inequality problem.  In addition, the study suggests that there exists a threshold for $\tau_i$ below which feasibility is relatively insensitive.

\begin{table}[tbp]
  \begin{center}
  \caption{Success rate generating a positive-definite norm across 1000 different geometries.}\label{tab:success_rate}
  \begin{tabular}{p{4cm}p{1cm}rrrr}
          &       & \multicolumn{4}{c}{\textbf{degree} $(p)$} \\\cline{3-6}
 $\tau_i$ & $n_x$ & 1  &   2   &   3   &  4  \\\hline
\rule{0ex}{3ex}%
$\tau_i^{\text{large}} = 1/(n_x)^2$
&  8  & 92.1\%  & 89.1\%  & 55.9\%  & 2.0\%  \\
& 16  & 99.9 \% & 99.9\%  & 89.9\%  & 71.9\% \\
& 32  & 100.0\% & 100.0\% & 99.5 \% & 96.2\% \\
& 64  & 100.0\% & 100.0\% & 100.0\% & 99.9\% \\\hline 
\rule{0ex}{3ex}%
$\tau_i^{\text{small}} = 1/(10 n_x)^2$
&  8  & 100.0\% & 100.0\% &  79.7\% &  9.6\% \\
& 16  & 100.0\% & 100.0\% &  92.9\% & 75.5\% \\
& 32  & 100.0\% & 100.0\% &  99.6\% & 97.2\% \\
& 64  & 100.0\% & 100.0\% & 100.0\% & 99.9\% \\\hline 
\rule{0ex}{3ex}%
$\tau_i^{\text{tiny}} = 1/(100 n_x)^2$
&  8  & 100.0\% & 100.0\% &  80.0\% &  9.7\% \\
& 16  & 100.0\% & 100.0\% &  92.9\% & 75.8\% \\
& 32  & 100.0\% & 100.0\% &  99.6\% & 97.2\% \\
& 64  & 100.0\% & 100.0\% & 100.0\% & 99.9\% \\\hline
  \end{tabular}
\end{center}
\end{table}

\subsubsection{Norm quadrature accuracy}\label{sec:quad_accuracy}

The quadrature defined by the nodes $X$ and weights $\bm{m}$ should numerically integrate smooth functions with an error $\text{O}(h^{2p})$, where $h$ is some measure of node resolution.  This rate of convergence is expected because i) the weights $\bm{m}^{\cell}$ integrate polynomials up to degree $2p-1$ exactly and ii) each cell $\cell$ uses the closest $N^{\cell}$ nodes to construct its stencil.  The goal of the study presented in this section is to verify the expected convergence rate of the quadrature.

We assess the accuracy of the quadrature using the box-circle, annulus, and airfoil domains.  For the box-circle geometry, we consider five node resolutions with $n_x \in \{10, 20, 40, 80, 160\}$.  For the annulus, we take $n_{\theta} = 6 n_r$ with $n_r \in \{ 6, 12, 24, 48 \}$.  We consider two node distributions for the annulus: quasi-uniform ($\beta=0.1$) and radially clustered toward the inner circle ($\beta=4$).  The airfoil geometry uses node resolutions defined by $n_y \in \{ 4, 8, 16, 32, 64 \}$; recall that $n_x = 5n_y$ for the airfoil.  We generate 10 distinct node samples for each geometry and resolution, to avoid drawing conclusions based on outliers.

We exclude samples from the plots below if the norm inequality fails to be feasible.  This was necessary for some samples from the annulus with the clustered node distribution and some samples from the airfoil geometry.  Specifically, three samples from the annulus and two samples from the airfoil geometry failed for $p=3$ on the coarsest distributions, and all ten samples for $p=4$ failed on the coarsest distributions for both problems.

The integral to be estimated on $\Omega_{\text{box}}$ is 
\begin{equation*}
  I_{\text{box}} = \int_{\Omega_{\text{box}}} \frac{\cos(2\theta)}{r} \, d\Omega = 0,
\end{equation*}
where $(r,\theta)$ are polar coordinates using the box center $\bm{x}_c$ as the origin.  The integral used on the annulus domain is 
\begin{equation*}
  I_{\text{ann}} = \int_{\Omega_{\text{ann}}} \frac{e^r}{r} \, d\Omega = 2 \pi \big(e - e^{\frac{1}{2}} \big),
\end{equation*}
and the integral used on the airfoil domain is 
\begin{equation*}
  I_{\text{foil}} = \int_{\Omega_{\text{foil}}} e^x \, d\Omega =
  \frac{3\,e}{4} - \frac{5}{8} \sqrt{\pi} \erfi(1),
\end{equation*}
where $\erfi(x)$ is the imaginary error function, which we evaluate using the \texttt{SpecialFunctions.jl} package. The numerical estimates for $I_{\text{box}}$, $I_{\text{ann}}$, and $I_{\text{foil}}$ are computed using the formula 
\begin{equation*}
  I_{N} = \bm{1}^T \mat{M} \bm{f} = \bm{m}^T \bm{f},
\end{equation*}
where $\bm{f} \in \mathbb{R}^{N}$ is the appropriate integrand evaluated at the nodes in $X$.

The results of the quadrature accuracy study are presented in Figure~\ref{fig:fun_error}.  The absolute functional error is plotted versus the nominal mesh spacing for the box-circle (Fig.~\ref{fig:fun_error_box}), the two node distributions for the annulus (Figs.~\ref{fig:fun_error_annulus_uniform} and \ref{fig:fun_error_annulus_nonuniform}), and the airfoil (Fig.~\ref{fig:fun_error_airfoil}).  The nominal mesh spacing, $h$, is based on the mean quadrature weight size:
\begin{equation}\label{eq:h_nominal}
  h = \sqrt{\frac{1}{N} \sum_{i=1}^N m_i}.
\end{equation}
The solid lines in the convergence plots indicate the expected asymptotic behavior for the functional errors, \ie, $\text{O}(h^{2p})$.  These reference lines are colored based on the corresponding $p$ and are anchored to the smallest error in each data set.

Overall, the quadrature errors appear to converge at the expected rate of $2p$ or greater.  The exceptions are the $p=4$ errors on the annulus with quasi-uniform nodes; the $p=1$ errors on the annulus with clustered nodes; and the $p=3$ and $p=4$ errors on the airfoil, where round-off errors are impacting the results on the finest node distributions. 

\begin{figure}[tbp]
  \begin{subfigure}[t]{0.49\textwidth}
    \centering
    \includegraphics[width=\textwidth]{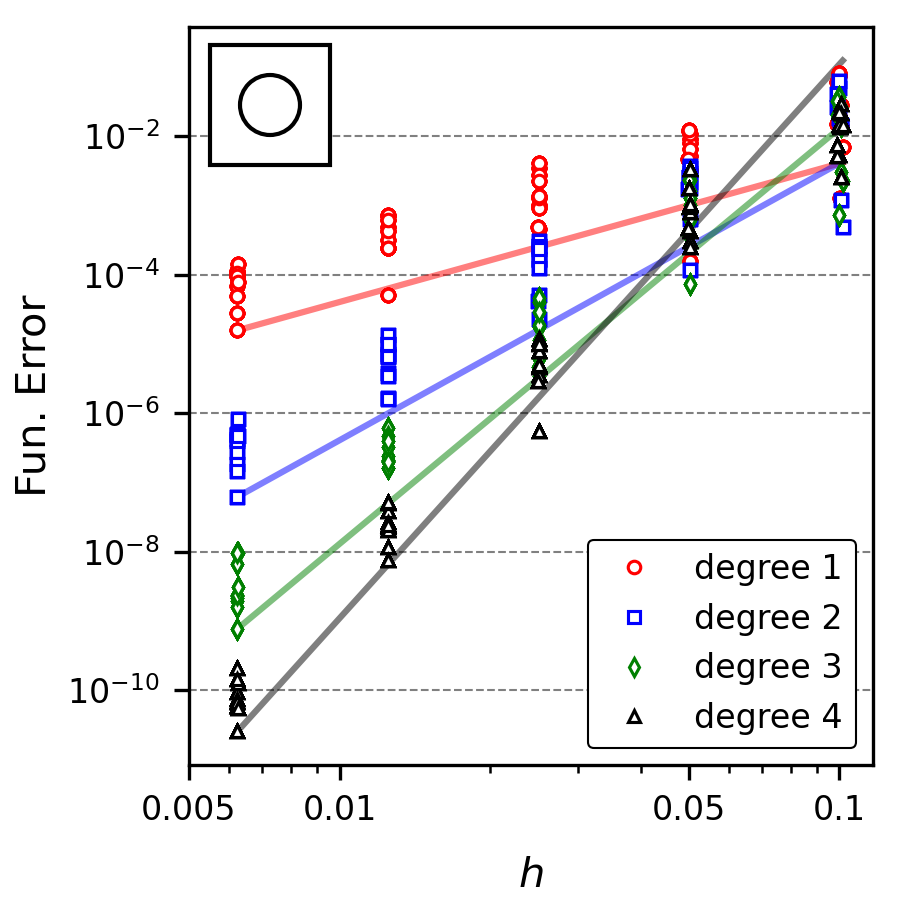}
    \caption{\small box-circle} \label{fig:fun_error_box}
  \end{subfigure}%
  \hfill%
  \begin{subfigure}[t]{0.49\textwidth}
    \centering
    \includegraphics[width=\textwidth]{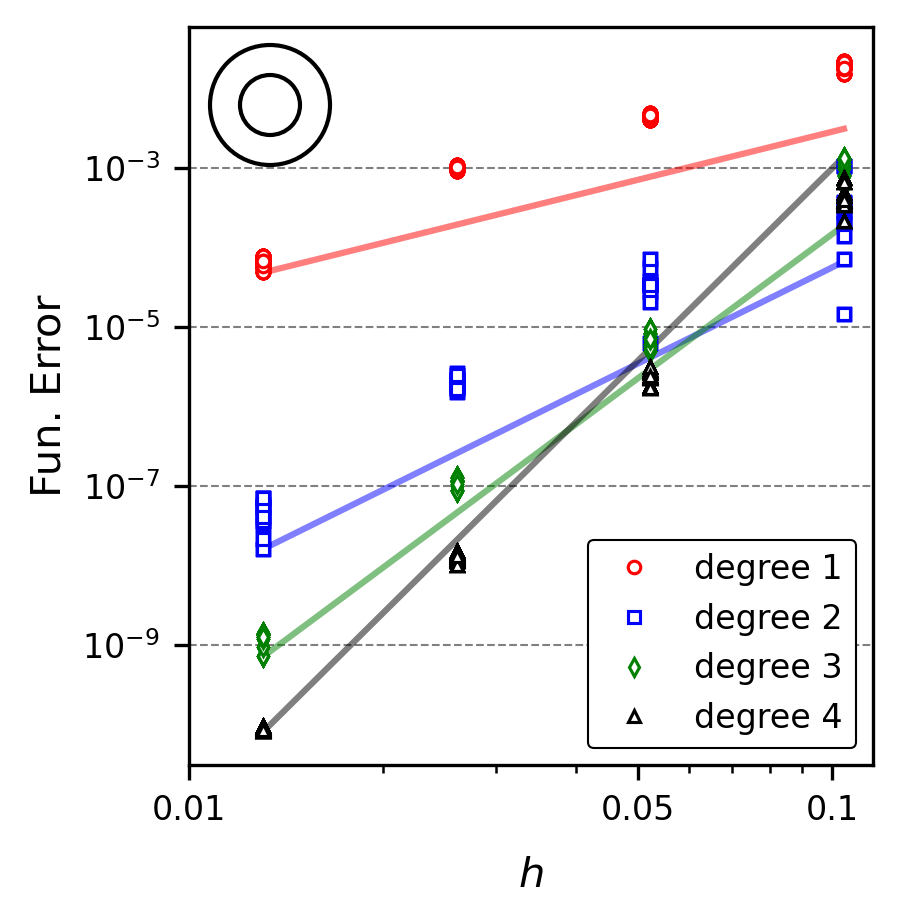}
    \caption{\small annulus, quasi-uniform} \label{fig:fun_error_annulus_uniform}
  \end{subfigure}\\[2ex]%
  \begin{subfigure}[t]{0.49\textwidth}
    \centering
    \includegraphics[width=\textwidth]{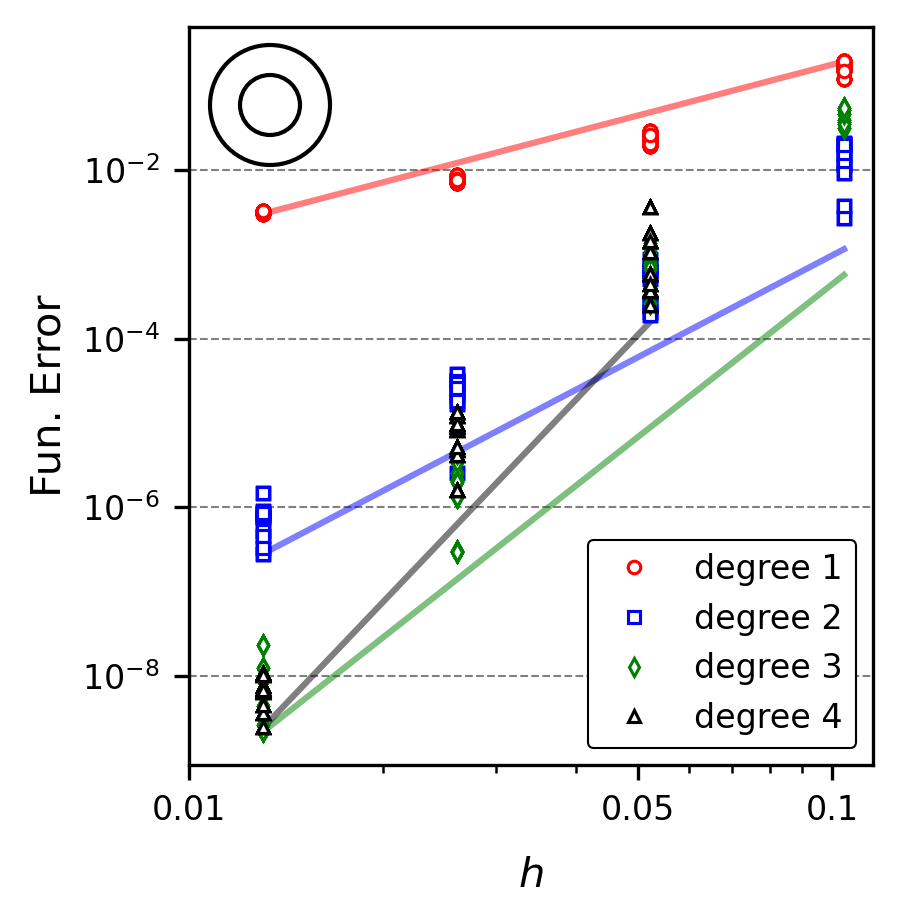}
    \caption{\small annulus, non-uniform} \label{fig:fun_error_annulus_nonuniform}
  \end{subfigure}%
  \hfill 
  \begin{subfigure}[t]{0.49\textwidth}
    \centering
    \includegraphics[width=\textwidth]{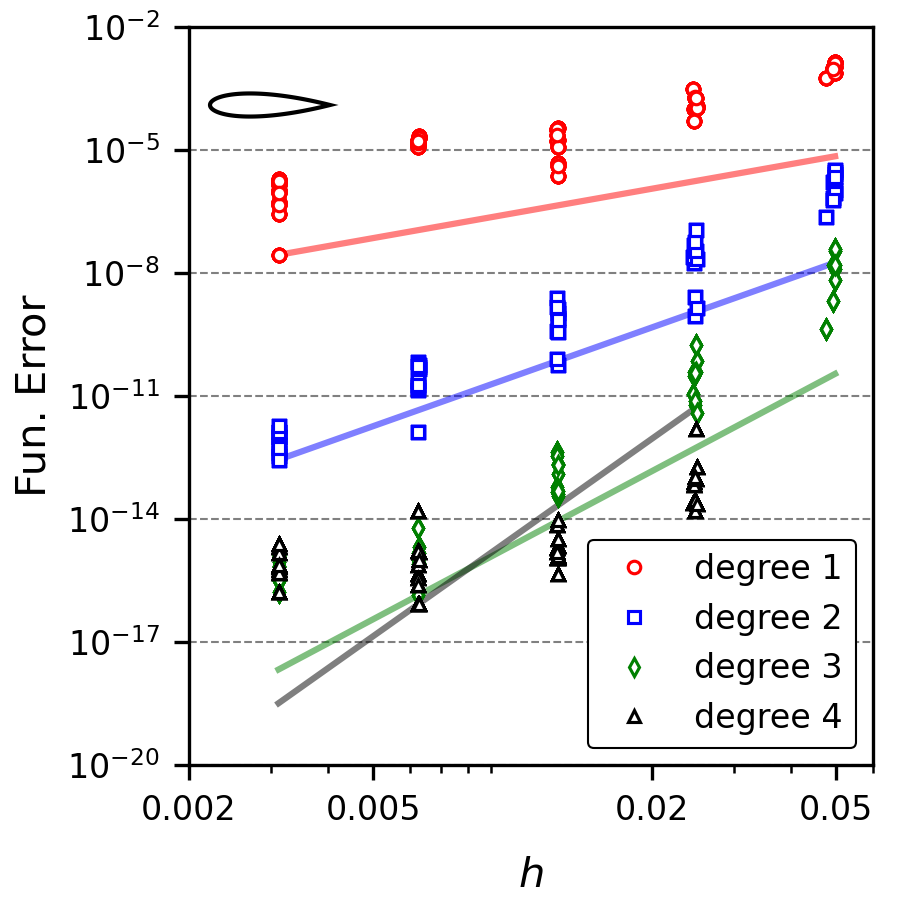}
    \caption{\small airfoil} \label{fig:fun_error_airfoil}
  \end{subfigure}
  \caption{\small Functional errors versus nominal mesh spacing for different node samples and geometries.} \label{fig:fun_error}
\end{figure}

\subsection{Steady advection accuracy study}\label{sec:sbp_accuracy_verify}

Having verified and characterized the diagonal mass matrix $\mat{M}$, we now turn our focus to the SBP operators themselves.  In this section, we use the steady advection equation to verify the solution accuracy of the SBP discretization~\eqref{eq:advection_sbp}.

The accuracy study uses constant-coefficient advection with $\bm{\lambda} = [1,1]^T$.  We adopt the method of manufactured solutions with the exact solution 
\begin{equation*}
  \fnc{U}(\bm{x}) = \exp(x + y),
\end{equation*}
and add the appropriate source term to the right-hand side of \eqref{eq:advection}.

We adopt the same geometries and node distributions as the ones used in the quadrature-accuracy study (Section~\ref{sec:quad_accuracy}), except that we only consider the four coarser node resolutions for the box-circle and airfoil geometries.  For each sample, the $L^2$ error for the degree $p$ discretization is approximated using the corresponding SBP quadrature:
\begin{equation}\label{eq:l2error}
  L^2\;\text{error} \approx \sqrt{ \big( \bm{u} - \bm{u}_{\text{exact}} \big)^T \mat{M} \big(\bm{u} - \bm{u}_{\text{exact}} \big)},
\end{equation}
where $\big[ \bm{u}_{\text{exact}}\big]_i = \fnc{U}(\bm{x}_i)$ for all $i\in \{1,2,\ldots,N\}$.  As before, the nominal mesh spacing is based on the mean quadrature weight size as defined by Equation~\eqref{eq:h_nominal}.

Figure~\ref{fig:error} summarizes the results of the SBP-accuracy study.  The reference lines in Figures~\ref{fig:error_box}, \ref{fig:error_annulus_uniform}, \ref{fig:error_annulus_nonuniform}, and \ref{fig:error_airfoil} show the design convergence rates of $p+1$, and the lines are anchored to the smallest error from the corresponding degree $p$ data set. Comparing these reference lines to the data at the finest and second-finest node resolutions, we conclude that the errors converge at the expected asymptotic rates for most of the geometries and distributions, with the following exceptions: the $p=4$ error on the annulus with quasi-uniform nodes appears to be closer to a rate of four rather than five; the $p=1$ airfoil discretization appears to be first-order only, and; the $p=4$ airfoil discretization is between third and fourth order.

\begin{figure}[tbp]
  \begin{subfigure}[t]{0.49\textwidth}
    \centering
    \includegraphics[width=\textwidth]{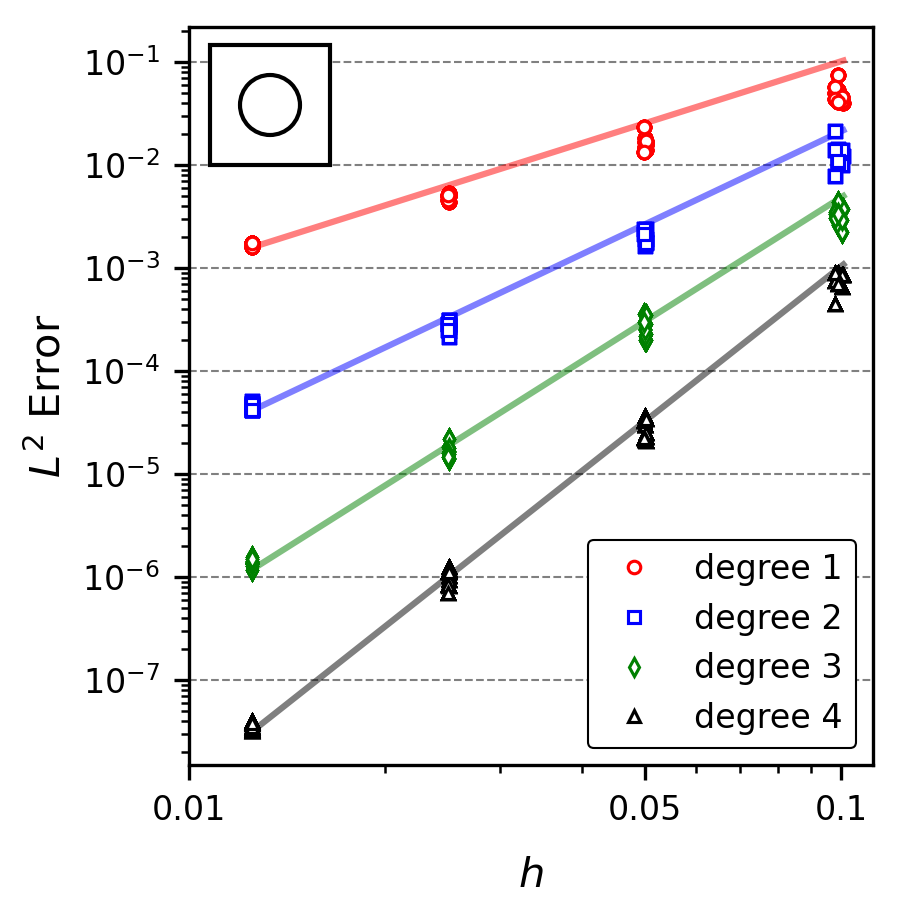}
    \caption{\small box-circle} \label{fig:error_box}
  \end{subfigure}%
  \hfill%
  \begin{subfigure}[t]{0.49\textwidth}
    \centering
    \includegraphics[width=\textwidth]{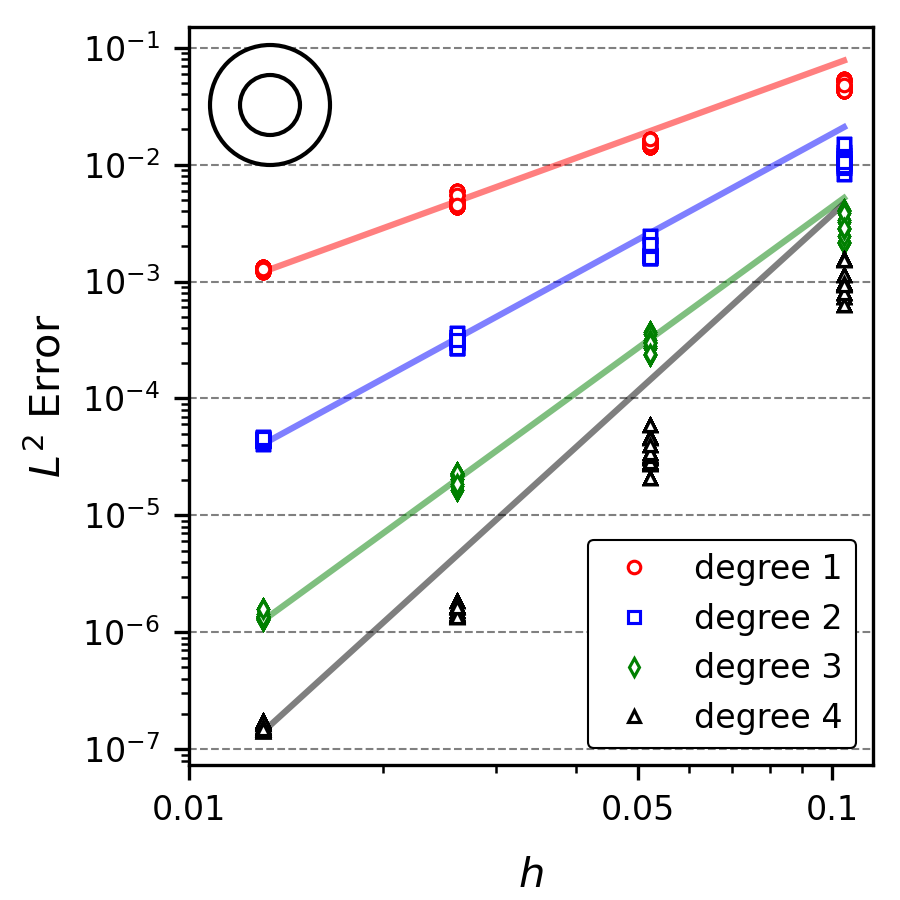}
    \caption{\small annulus, quasi-uniform} \label{fig:error_annulus_uniform}
  \end{subfigure}\\[2ex]%
  \begin{subfigure}[t]{0.49\textwidth}
    \centering
    \includegraphics[width=\textwidth]{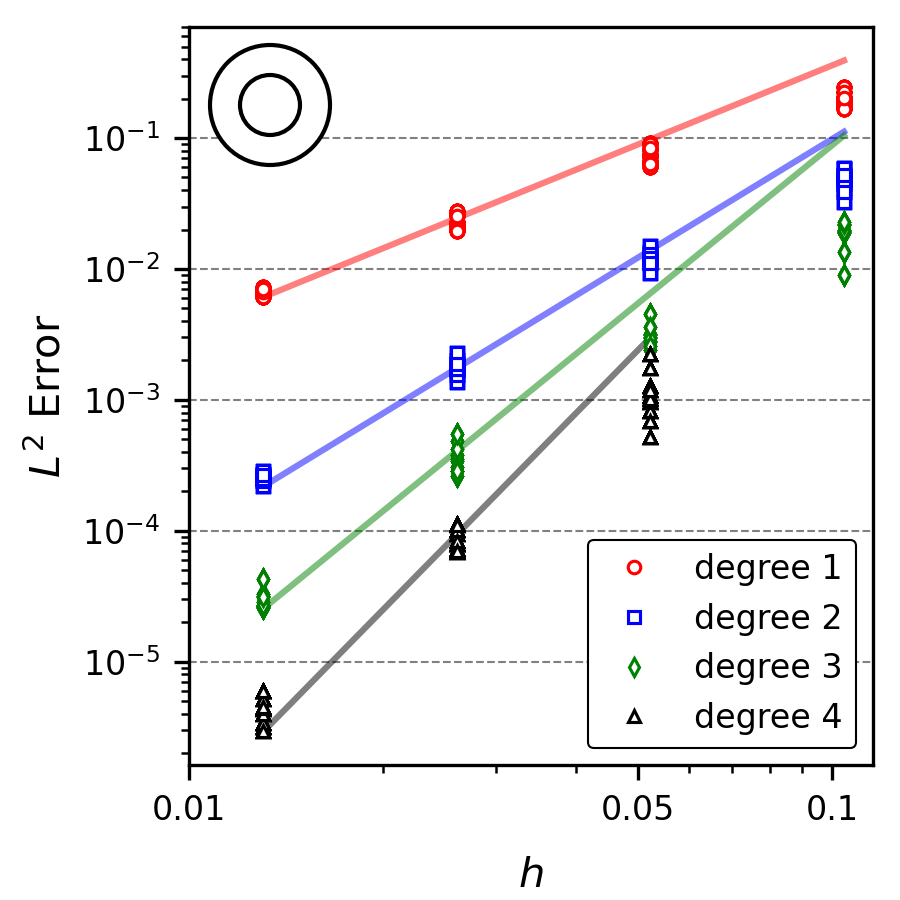}
    \caption{\small annulus, non-uniform} \label{fig:error_annulus_nonuniform}
  \end{subfigure}%
  \hfill%
  \begin{subfigure}[t]{0.49\textwidth}
    \centering
    \includegraphics[width=\textwidth]{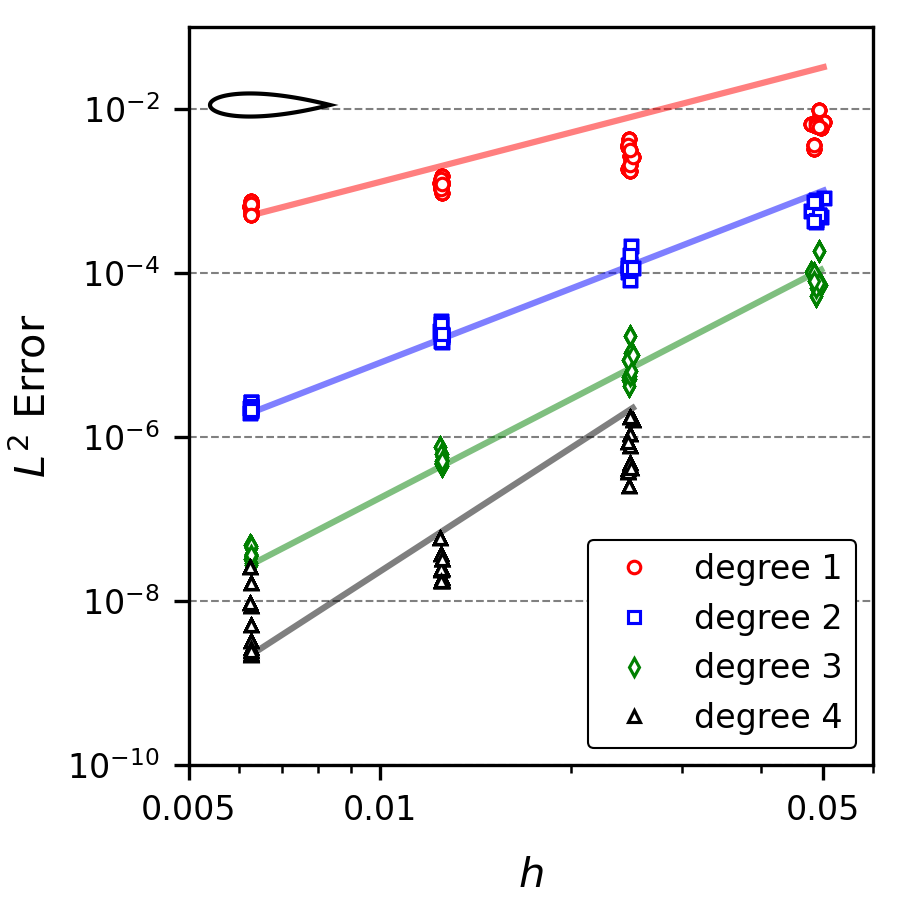}
    \caption{\small airfoil} \label{fig:error_airfoil}
  \end{subfigure}%
  \caption{\small $L^2$ errors versus nominal mesh spacing for different node samples and geometries from the steady advection accuracy study}. \label{fig:error}
\end{figure}

\subsection{Efficiency study}

Point-cloud SBP operators are less sparse than classical (tensor-product) SBP finite-difference operators, which has implications for the relative efficiency of the point-cloud discretizations.  To help quantify their efficiency, this section compares the performance of point-cloud discretizations with that of tensor-product SBP discretizations applied to the linear advection problem.

We adopt the annulus domain for the efficiency study, because it simplifies mesh generation for the classical SBP operators.  The tensor-product discretization uses a conforming grid with node coordinates given by $\bm{x}_{i} = [r_i \cos(\theta_i), r_i \sin(\theta_i)]^T$,  where
\begin{alignat*}{2}
  r_{i} &= \frac{1}{2(n_r -1)} \big(n_r - 1 + j - 1 \big), & 
  \qquad \forall\, j &\in \{1,2,\ldots,n_r\}, \\
  \theta_{i} &= \frac{2\pi(k-1)}{n_{\theta}}, &
  \qquad \forall\, k &\in \{1,2,\ldots,n_{\theta} \},
\end{alignat*}
and $i=k n_r + j$.  For the radial direction we take $n_r \in \{11, 21, 41, 81, 161\}$ and for the angular direction $n_{\theta} = 6 (n_r-1) + 1$.

The tensor-product discretization is standard.  In the radial direction we use the second- ($p=1$), third- ($p=2$), fourth- ($p=3$), and fifth- ($p=4$) order classical SBP operators from Mattsson and Nordstr\"{o}m \cite{Mattsson2004summation} as implemented in the \texttt{SummationByPartsOperators.jl} package~\cite{Ranocha2021SBP}.  Grids for which $n_r$ is too small for the given SBP operators are excluded from the study.  In the angular direction we use centered, periodic finite-difference operators of order $2p$; these periodic operators have the same finite-difference coefficients as the interior stencils of the radial SBP operators.  We use upwind numerical flux functions along the boundaries $r=1/2$ and $r=1$ to impose the boundary conditions.  Finally, we compute the terms in the mapping Jacobian using the appropriate finite-difference operator, following the typical tensor-product discretization in $(r,\theta)$ space; see, for example, \cite{Pulliam1986efficient}.

The nodes for the point-cloud SBP operators are generated as described in Section~\ref{sec:node_distribution}, and they use the quasi-uniform ($\beta=0.1$) distribution with $n_{\theta} = 6 n_r$ and $n_{r} \in \{6,12,24,48\}$.  We show results for only one node-set realization from Section~\ref{sec:sbp_accuracy_verify} to avoid cluttering the figures, but the results for the other nine realizations are similar.

For each discretization, we solve the linear advection problem based on the manufactured solution given in Section~\ref{sec:sbp_accuracy_verify} and compute the $L^2$ error using Equation~\eqref{eq:l2error}.  Figure~\ref{fig:efficiency} shows the $L^2$ errors plotted versus the number of nodes, $N$, as well as the number of non-zeros ($nnz$) in the system matrix.  We use $nnz$ as a proxy for computational cost, since the point-cloud implementation has not been optimized.

The first (left) plot in Figure~\ref{fig:efficiency} shows that, for this problem, the $L^2$ errors from the two discretizations overlap with one another when plotted versus number of nodes.  However, the second (right) plot shows that the classical SBP discretization is significantly more efficient with respect to $nnz$.  Indeed, for the same $L^2$ error and degree $p$, the classical SBP discretization requires between four ($p=1$) and ten ($p=4$) times fewer non-zeros in the system matrix.

\begin{figure}[tbp]
\begin{center}
  \includegraphics[width=\textwidth]{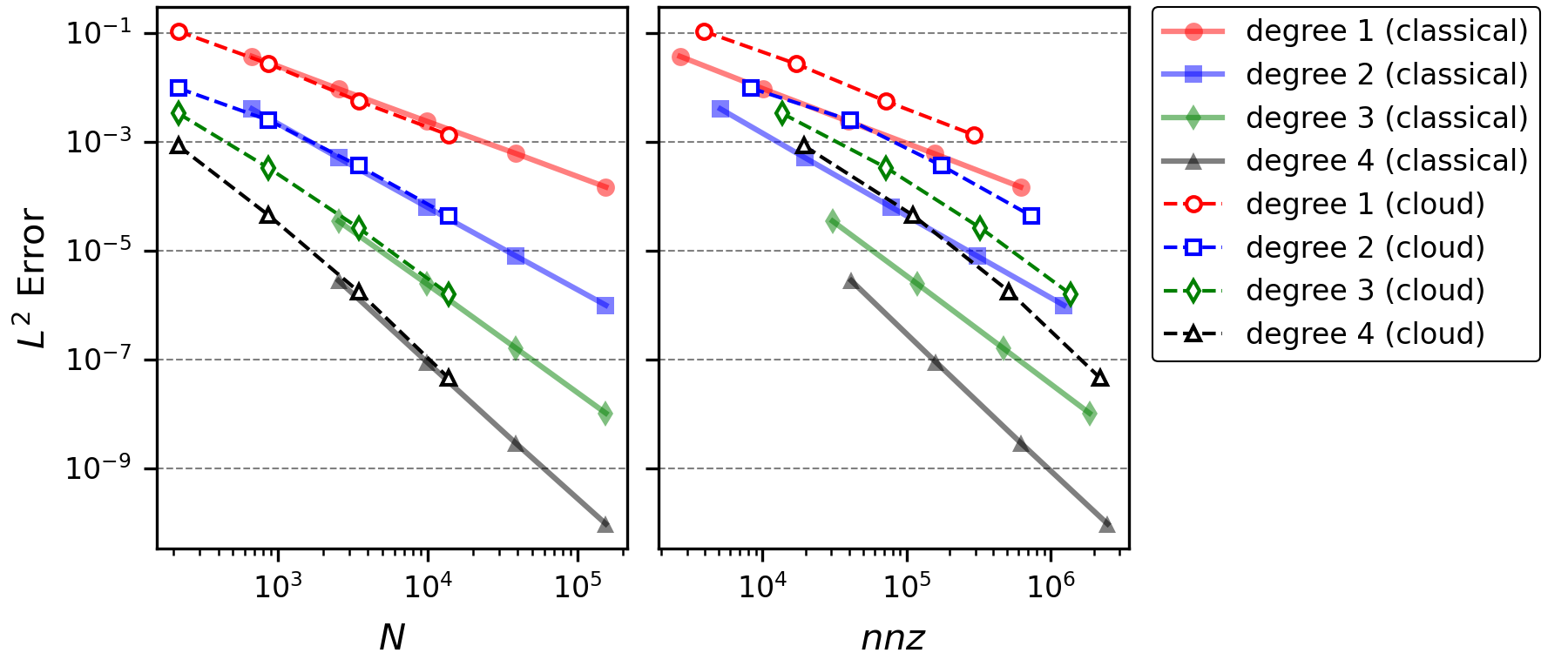}
  \caption{\small $L^2$ error versus number of nodes (left figure) and $L^2$ error versus the number of non-zeros in the advection system matrix (right figure).  Results are shown for both classical and point-cloud SBP discretizations of the advection problem on the annulus domain.} \label{fig:efficiency}
\end{center}
\end{figure}

\subsection{Energy stability study}\label{sec:energy_stab_verify}

This study is intended to verify the energy stability of the SBP discretization~\eqref{eq:advection_sbp}.  We consider the annulus geometry $\Omega_{\text{ann}}$ with the spatially varying advection field
\begin{equation*}
  \bm{\lambda} = [\lambda_x(\bm{x}), \lambda_y(\bm{x})]^T = \left[ \frac{-y}{2(x^2 + y^2)}, \frac{x}{2(x^2 + y^2)} \right]^T,
\end{equation*}
which corresponds to an irrotational vortex centered at the origin.  The initial condition is a Gaussian bump centered at $\bm{x}_{b} = [3/4,0]$:
\begin{equation*}
  \fnc{U}_0(\bm{x}) = \exp\big( - 4\|\bm{x} - \bm{x}_b\|^2 \big).
\end{equation*}

Boundary conditions are not required for this problem, because the velocity is everywhere parallel to the boundary.  Indeed, one can show that the SBP discretization \eqref{eq:advection_sbp} reduces to 
\begin{equation}\label{eq:advection_sbp_annulus}
  \mat{M} \frac{d \bm{u}}{dt} = \underbrace{-\frac{1}{2} \mat{\Lambda}_x \mat{Q}_x \bm{u} + \frac{1}{2} \mat{Q}_x^T \mat{\Lambda}_x \bm{u} - \frac{1}{2} \mat{\Lambda}_y \mat{Q}_y \bm{u} + \frac{1}{2} \mat{Q}_y^T \mat{\Lambda}_y \bm{u} -\mat{A} \bm{u}}_{\displaystyle \equiv \bm{r}}.
\end{equation}
This discretization is energy stable, since $\mat{M}$ is positive definite, $\mat{\Lambda}_x \mat{Q}_x - \mat{Q}_x^T \mat{\Lambda}_x$ and $\mat{\Lambda}_y \mat{Q}_y - \mat{Q}_y^T \mat{\Lambda}_y$ are skew symmetric, and $\mat{A}$ is positive semi-definite.  Left-multiplying the discretization by $\bm{u}^T$ we find the equation governing the rate-of-change of the solution energy:
\begin{equation}
  \frac{d}{dt} \big( {\textstyle \frac{1}{2}} \bm{u}^T \mat{M} \bm{u} \big) = \bm{u}^T \bm{r} = - \bm{u}^T \mat{A} \bm{u}.
\end{equation}
We consider two cases for the energy-stability results: one where the dissipation is absent --- so $\bm{u}^T \bm{r} = 0$ --- and one where $\mat{A}$ is present --- so $\bm{u}^T \bm{r} \leq 0$.

We use $\beta=1/10$ to generate a quasi-uniform node distribution for the annulus, with $n_r = 48$ and $n_{\theta} = 288$ nodes in the radial and angular directions, respectively.  We solve the ordinary differential equation~\eqref{eq:advection_sbp_annulus} for one period, $T = 2\pi$, using the classical fourth-order Runge Kutta method (RK4).  Since RK4 is conditionally stable, we limit the time step to $\Delta t = 2/\rho_p$, where $\rho_p$ is the spectral radius of the degree $p$ system matrix corresponding to~\eqref{eq:advection_sbp_annulus}; $\rho_p$ is computed using the Julia interface to the ARPACK library~\cite{Lehoucq1998arpack}.

Figure~\ref{fig:bump_ic} shows the initial condition, and Figure~\ref{fig:bump_final_p4} shows the numerical solution after one period using the $p=4$ discretization.  The initial condition and  solution are interpolated to the cells for these visualizations.  The advection velocity increases from the outer to the inner radii of the annulus, so, as time progresses, the Gaussian bump becomes twisted around the domain.

\begin{figure}[tbp]
  \begin{subfigure}[t]{0.3\textwidth}
    \centering
    \includegraphics[width=\textwidth]{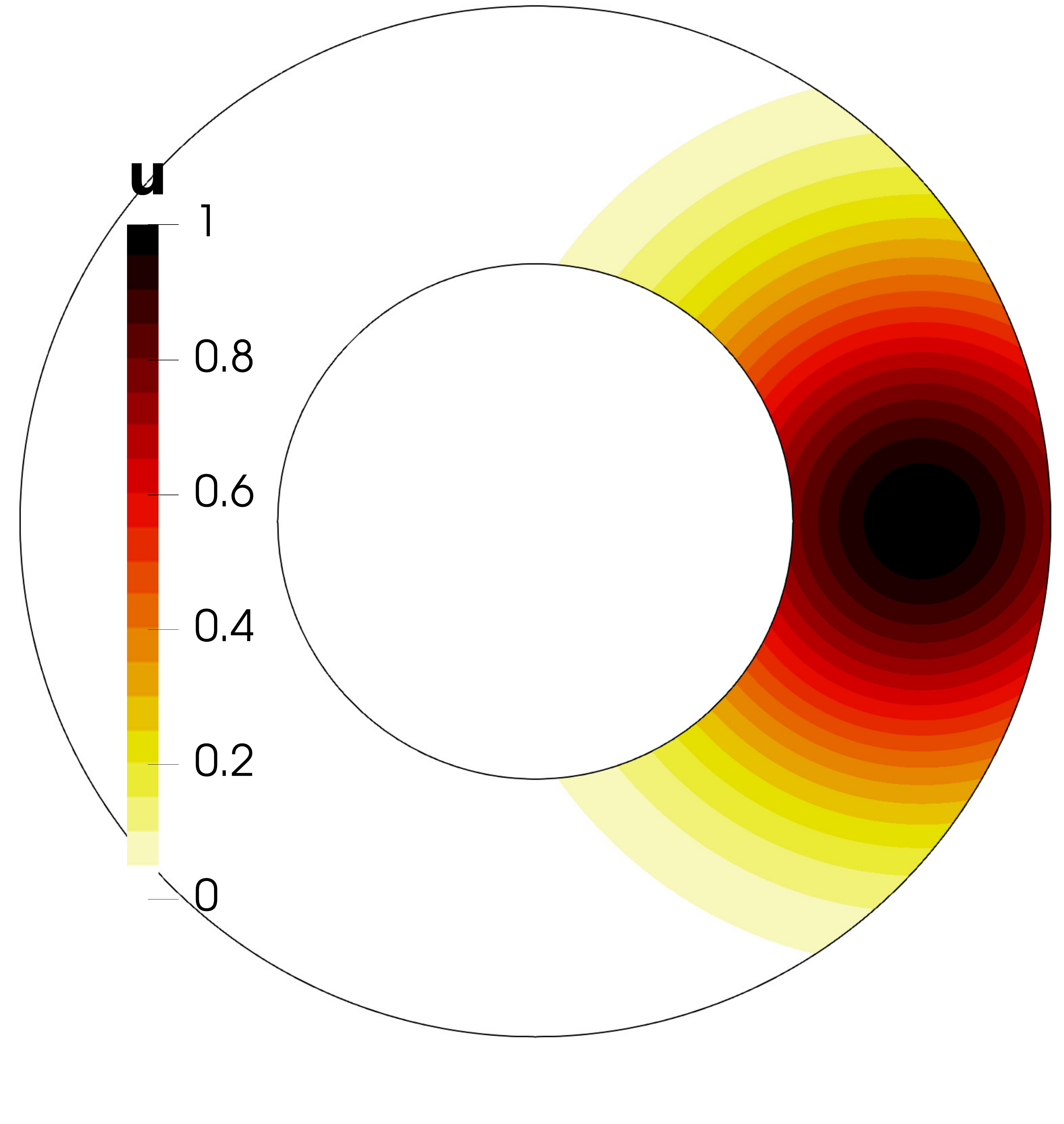}
    \caption{\small Initial condition.} \label{fig:bump_ic}
  \end{subfigure}%
  \hfill%
  \begin{subfigure}[t]{0.3\textwidth}
    \centering
    \includegraphics[width=\textwidth]{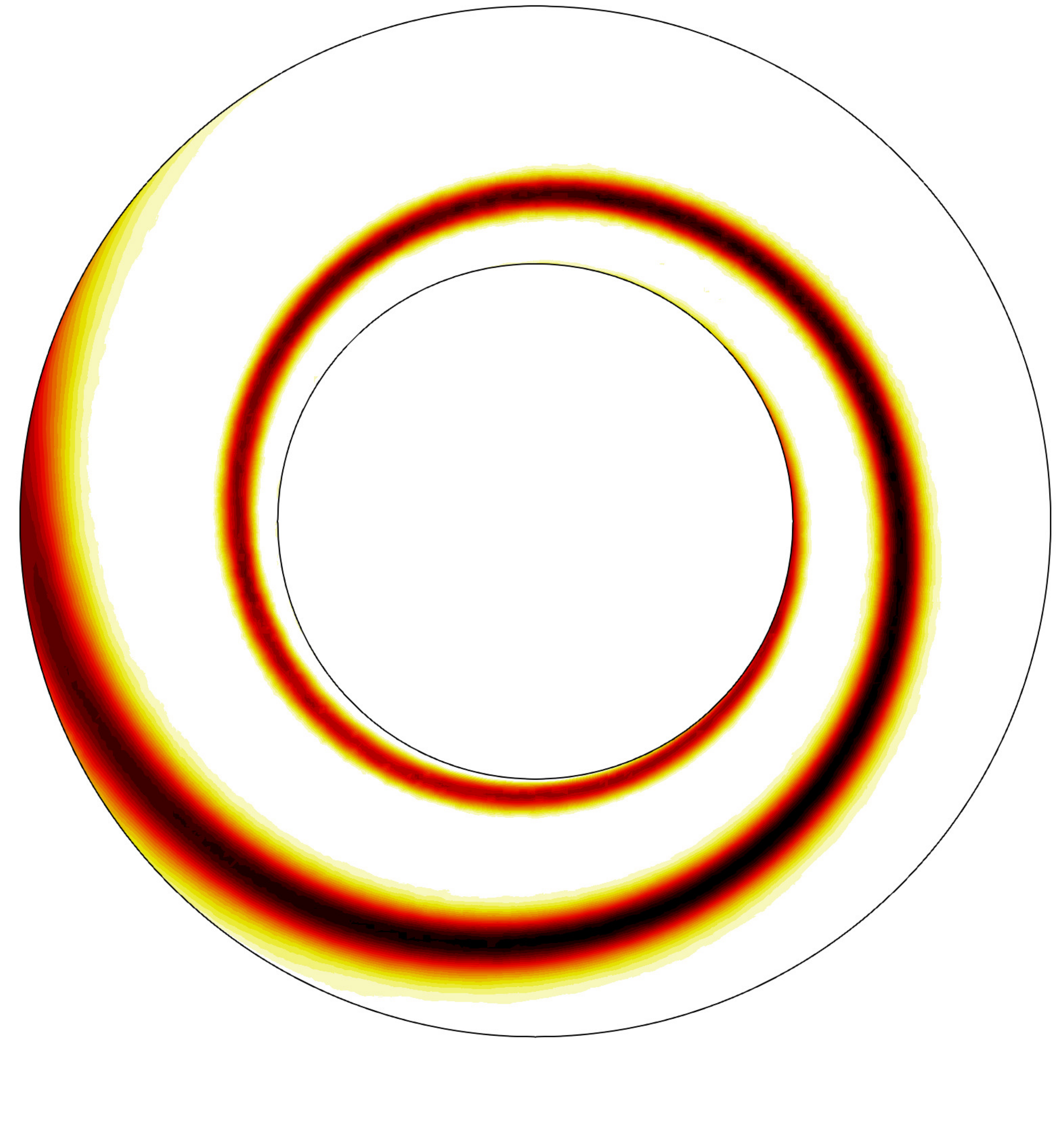}
    \caption{\small Final solution, $p=4$.} \label{fig:bump_final_p4}
  \end{subfigure}%
  \hfill%
  \begin{subfigure}[t]{0.39\textwidth}
    \centering
    \includegraphics[width=\textwidth]{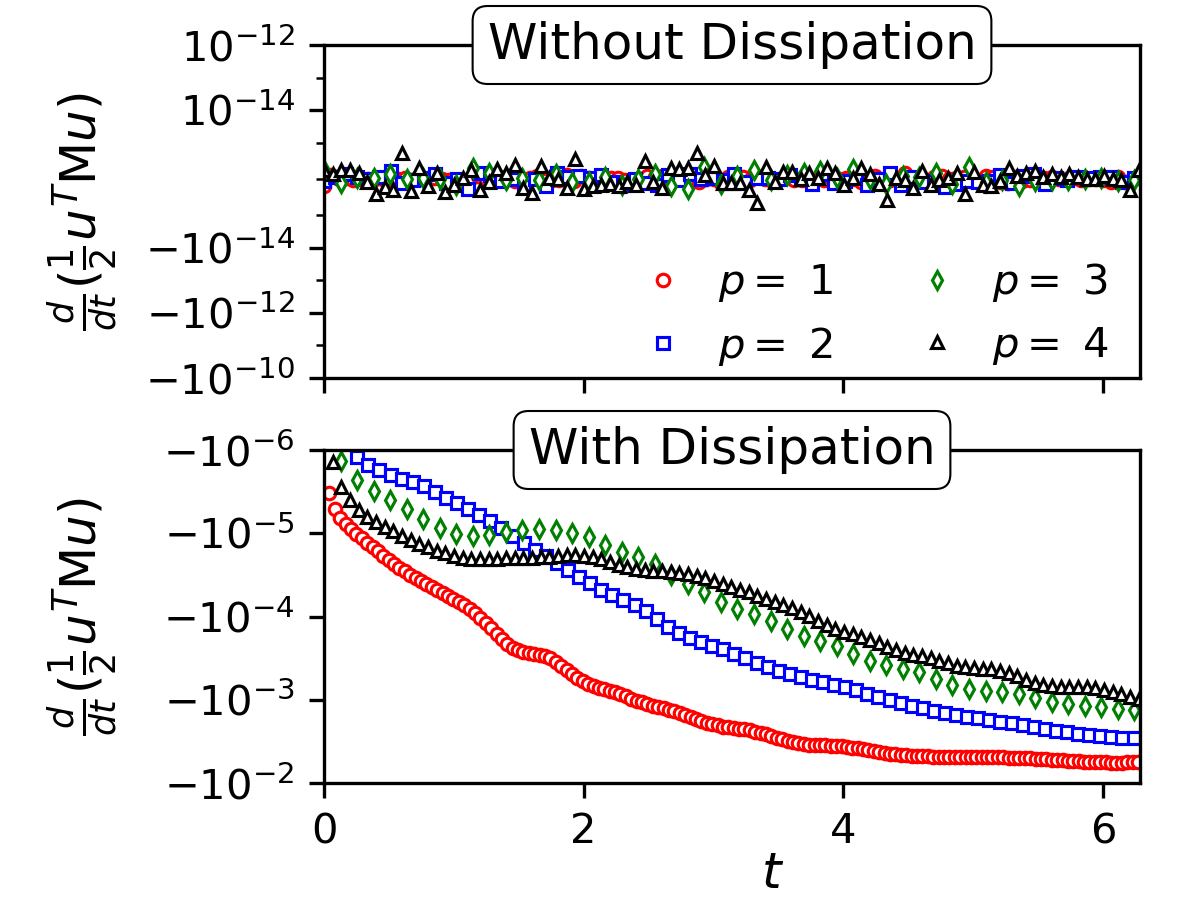}
    \caption{\small Energy rate} \label{fig:energy}
  \end{subfigure}%
  \caption{\small Initial condition (left), final discrete solution ($p=4$, center), and rate-of-change of energy (right) for the annulus problem.} \label{fig:bump}
\end{figure}

Figure~\ref{fig:energy} shows the rate-of-change of the energy versus time for the discretizations without dissipation and the discretizations with dissipation.  The rates are computed by evaluating $\bm{u}^T \bm{r}$ at the beginning of each time step.  Without dissipation the rate-of-change of energy is close to machine precision, as expected.  When dissipation is included, the rate-of-change is strictly negative; furthermore, as $p$ increases the magnitude of the rate-of-change generally decreases.

We conclude this section with Figure~\ref{fig:nodiss_vs_diss_errors}, which plots the absolute error distributions at the final time with and without dissipation.  To conserve space, the figure shows the errors for $p=1$ and $p=4$ only; the results for $p=2$ and $p=3$ are qualitatively similar, but have error magnitudes between the $p=1$ and $p=4$ errors.  The exact solution at node $\bm{x}_i$ and time $t=2\pi$ is approximated by numerically solving the ordinary differential equation $d\bm{x}/dt = \bm{\lambda}(\bm{x})$ with the terminal condition $\bm{x}(t=2\pi) = \bm{x}_i$, backward in time from $t=2\pi$ to $t=0$ using Verner's 9/8 Runge-Kutta method~\cite{Verner2010numerically} as implemented in the \texttt{DifferentialEquations.jl} Julia package~\cite{Rackauckas2017differentialequations}, and then substituting the solution into $\mathcal{U}_0(\bm{x})$.

Figure~\ref{fig:nodiss_vs_diss_errors} demonstrates the value of including some numerical dissipation with the point-cloud SBP discretizations; the solutions with dissipation contain fewer high-frequency oscillations compared to the solutions without dissipation.  Furthermore, the $p=4$ errors are smaller in magnitude relative to the $p=1$ errors, as expected.

\begin{figure}[tp]
  \begin{subfigure}[t]{0.49\textwidth}
    \centering
    \includegraphics[width=1.0\textwidth]{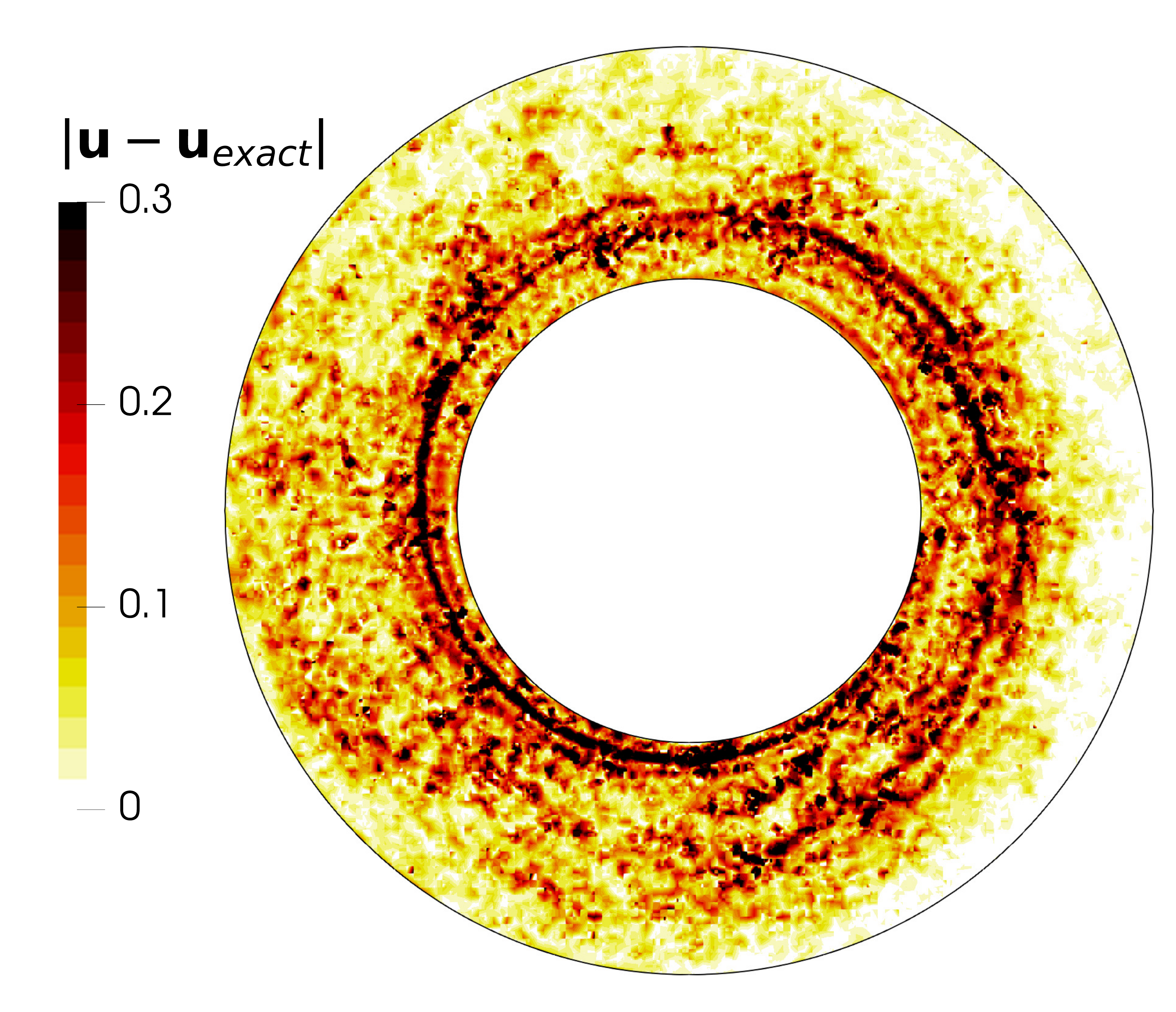}
    \caption{\small $p=1$, no dissipation} \label{fig:error_nodiss_p1}
  \end{subfigure}%
  \hfill%
  \begin{subfigure}[t]{0.49\textwidth}
    \centering
    \includegraphics[width=1.0\textwidth]{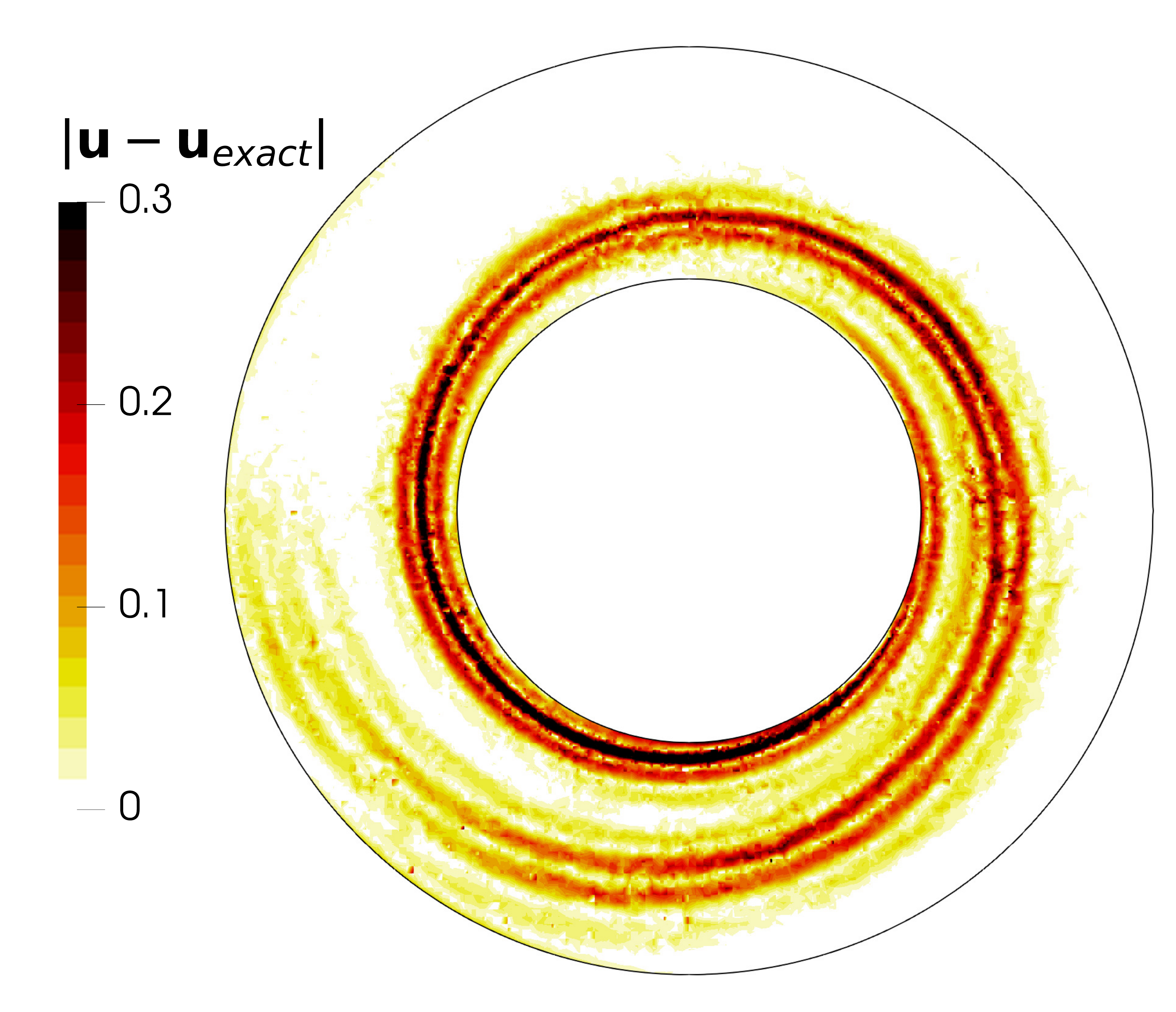}
    \caption{\small $p=1$, with dissipation} \label{fig:error_p1}
  \end{subfigure}\\%
  \begin{subfigure}[t]{0.49\textwidth}
    \centering
    \includegraphics[width=1.0\textwidth]{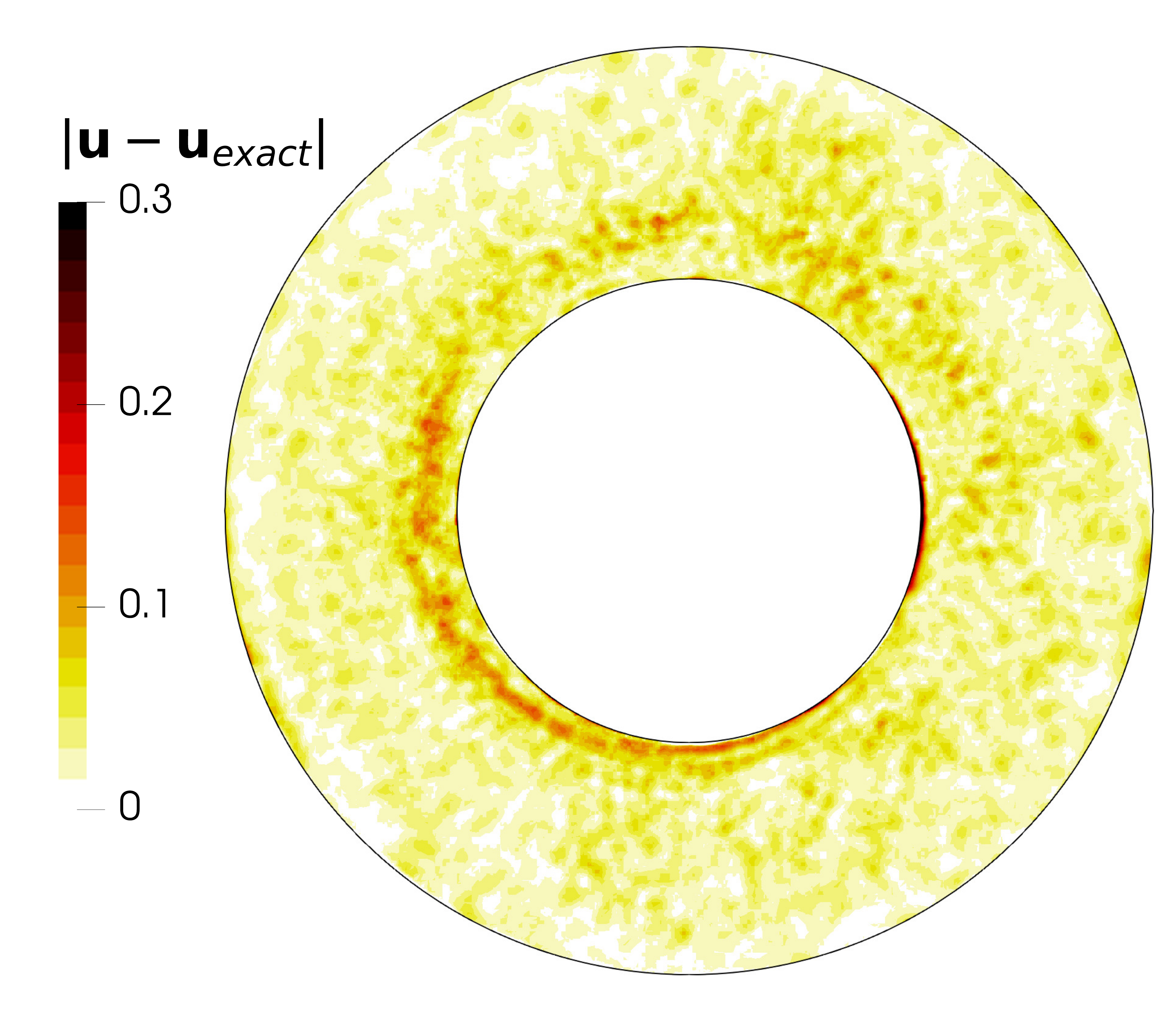}
    \caption{\small $p=4$, no dissipation} \label{fig:error_nodiss_p4}
  \end{subfigure}%
  \hfill%
  \begin{subfigure}[t]{0.49\textwidth}
    \centering
    \includegraphics[width=1.0\textwidth]{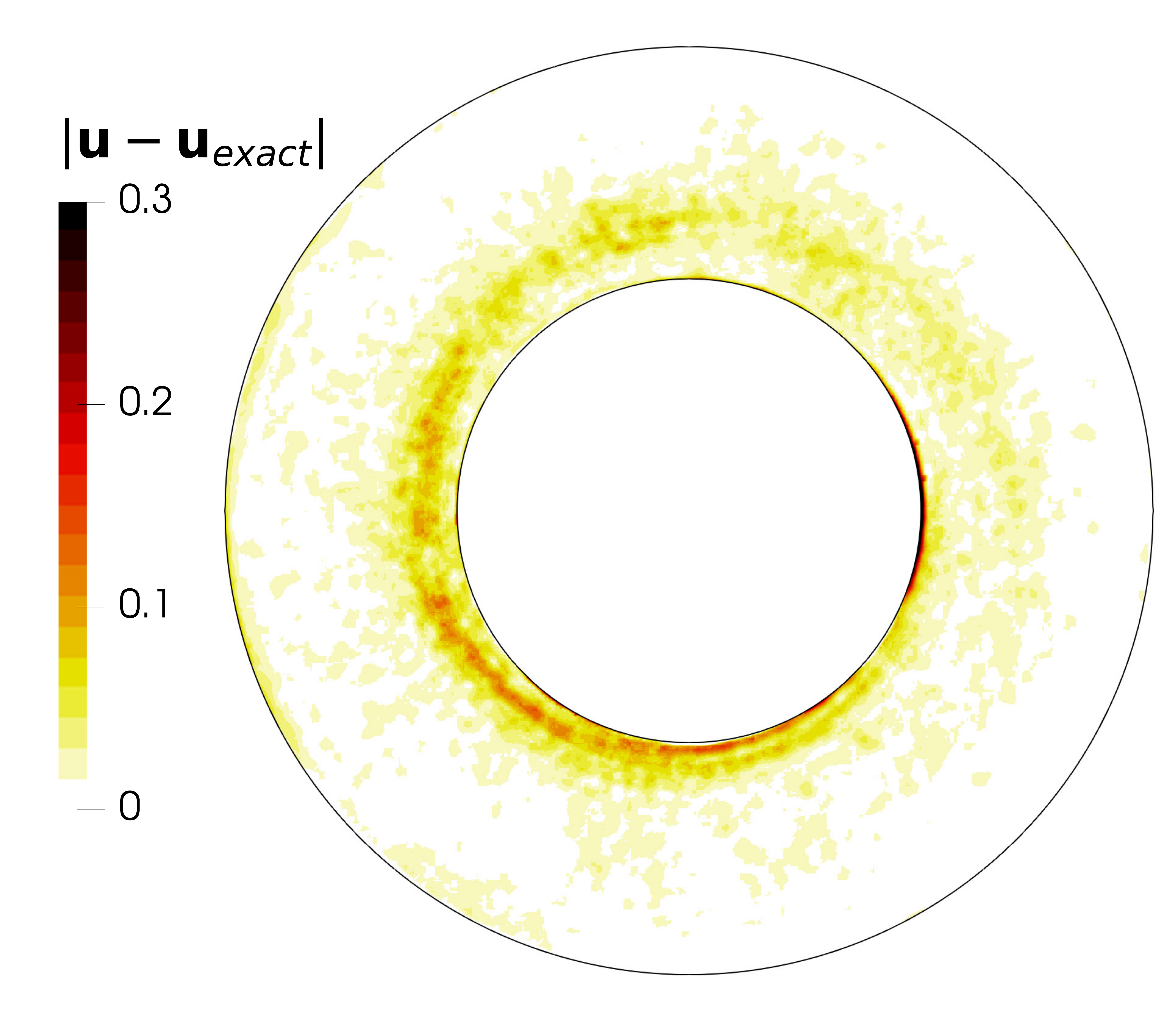}
    \caption{\small $p=4$, with dissipation} \label{fig:error_p4}
  \end{subfigure}%
  \caption{\small Absolute error distributions at the final simulation time $t=2\pi$ for $p=1$ (top) and $p=4$ (bottom) with no dissipation (left) and dissipation added (right). } \label{fig:nodiss_vs_diss_errors}
\end{figure}

\subsection{Component runtimes and scaling}\label{sec:runtimes}

The paper's final study investigates the relative computational cost and scaling of the components used to construct the point-cloud SBP operators.  While our implementation has not been optimized, we hope the timings presented below provide some guidance regarding potential computational bottlenecks that will need to be addressed in future work.

We gather component runtimes using the airfoil geometry, $\Omega_{\text{foil}}$, and the linear advection problem described in Section~\ref{sec:sbp_accuracy_verify}.  We consider the same distributions as the accuracy study, with point resolutions defined by $n_y \in \{4, 8, 16, 32, 64\}$ and $n_x = 5 n_y$.  For each discretization, we time the following components in the construction process:
\begin{description}
\item[mesh:] Background mesh generation (Section~\ref{sec:quad_mesh}).
\item[stencil:] Stencil construction for each cell (Section~\ref{sec:stencil}).
\item[$\mat{M}$ opt.:] Mass-matrix optimization (Section~\ref{sec:norm_solve}).
\item[$\mat{S}_x$ \& $\mat{E}_x$:] Construction of $\mat{S}_x$ and $\mat{E}_x$ (Sections~\ref{sec:sym_part}, \ref{sec:skew_part}, and \ref{sec:assemble}).
\item[$\mat{A}$ (diss.):] Construction of the dissipation operator (Section~\ref{sec:dissipation}).
\item[system:] Forming the discretization (Equation~\eqref{eq:advection_sbp}).
\end{description}
While the list above is not exhaustive, the runtimes of these components were found to be representative of the overall costs.

Figure~\ref{fig:timing} shows the average component runtimes versus point resolution $n_x$, where the average times are based on ten distribution samples\footnote{More precisely, we gather 11 samples and discard the first sample to avoid including Julia's compile times.}.  The linear inequality fails for $p=4$ on the coarsest distribution ($n_x=20$) for all samples, so the average times for components that depend on the mass matrix are not shown for degree 4 and $n_x = 20$.  Note that a log scale is used for the axes, and a light gray reference is included in each subplot to show linear cost.

We highlight three observations based on Figure~\ref{fig:timing} and the component times more generally.
\begin{itemize}
\item  Using data from the two finest grids, we find that the components scale between $t \propto n_x^{1.2}$, for the background mesh construction, and $t \propto n_x^{1.5}$ for the $\mat{M}$ optimization problem.  Thus, the times scale super linearly with $n_x$ and not linearly with the number of nodes ($N \propto n_{x}^2$), which is somewhat surprising.  However, inspecting subcomponent timings reveals that the cost is driven by the number of cut cells, which scales with $n_x$ and not $N$.  This is true even for the mass-matrix optimization, which is dominated by the time required to set up the linear inequality \eqref{eq:linear_program} and not the time to solve it.
\item Constructing $\mat{S}_x$ and $\mat{E}_x$ is the most expensive component at present.  This can be attributed to the cut-cell related costs, particularly computing quadratures for the cut cells and their faces.
\item The system build --- shown as brown diamonds in Figure~\ref{fig:timing} --- is the fastest component for $p=1$ but the second slowest for $p=4$.  This reflects the decrease in sparsity in the system matrix as $p$ increases, which then requires more operations when building the matrix.
\end{itemize}

\begin{remark}
While we have performed limited three-dimensional simulations, our preliminary results suggest that the cut cells continue to dominate the component run times.  Consequently, we expect the construction algorithm to scale at approximately $N^{2/3}$ in three dimensions.
\end{remark}

\begin{figure}[tbp]
  \begin{center}
    \includegraphics[width=\textwidth]{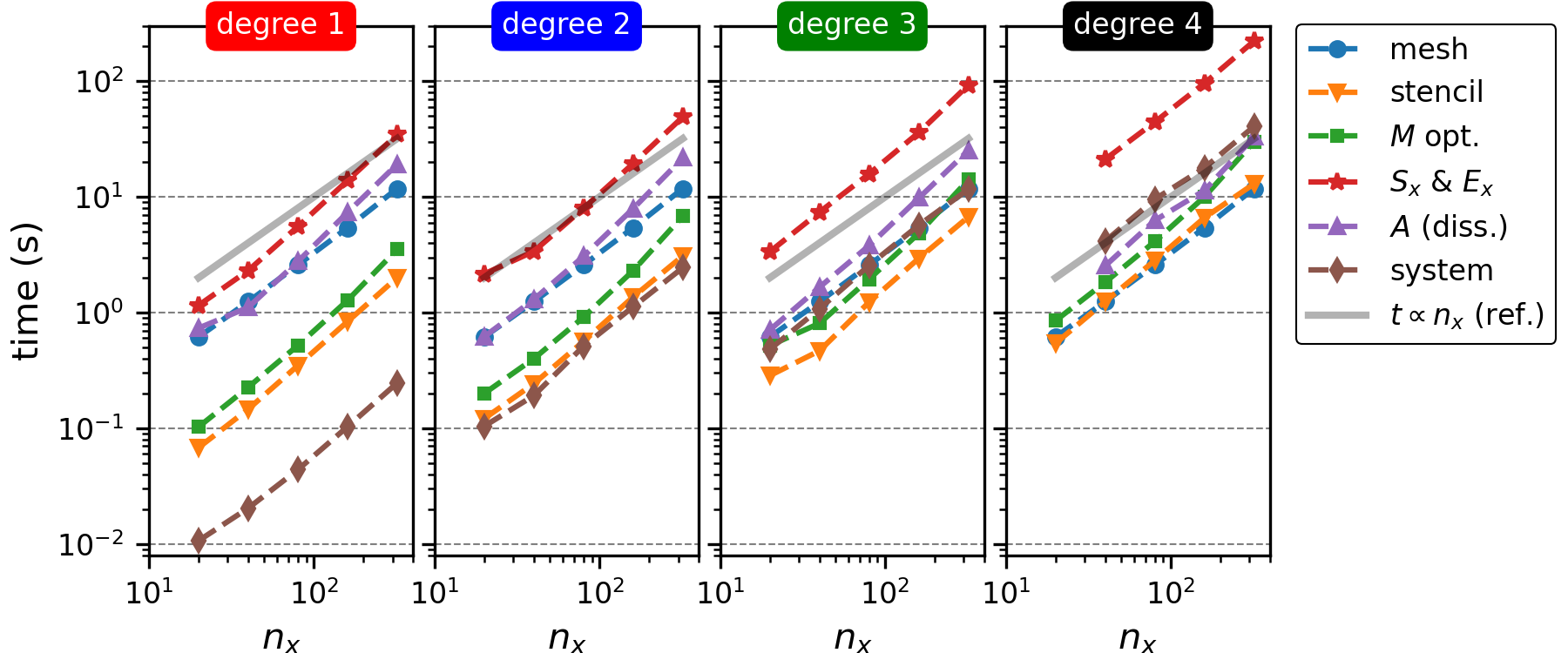}
    \caption{\small CPU time, in seconds, for six representative components of the point-cloud SBP construction algorithm.} \label{fig:timing}
  \end{center}
\end{figure}

\section{Summary and future work}\label{sec:conclude}

The paper described an algorithm to construct diagonal-norm SBP operators on complex geometries with the nodes defined on a point cloud.  The algorithm relies on a background Cartesian cut-cell mesh, so it is not mesh-free, but the generation of this mesh is easily automated.  Using the mesh, the algorithm constructs cell-based \emph{degenerate} SBP operators whose norm is not necessarily positive definite.  To compute the cell-based skew-symmetric matrices, \eg $\mat{S}_x^{\cell}$, we introduced a formula that avoids solving a large sparse linear system.  The cell-based operators are assembled into global (degenerate) SBP operators.

The diagonal mass matrix may not be positive-definite, in general.  Therefore, we introduced additional degrees of freedom into the cell stencils to develop a linear inequality that enforces a positive-definite mass matrix.  We presented a theorem that elucidates when the linear inequality has a solution; for equidistributed nodes, the theorem suggests that solutions exist for a sufficiently large set of nodes, which is corroborated by the results.  In this work, the linear inequality was reformulated as a linear optimization problem and solved with an interior-point algorithm.

We conducted several numerical studies to verify the theoretical claims in the paper. Studies of the diagonal mass matrix demonstrated that the entries (\ie quadrature weights) are well-behaved after solving the linear inequality, and that a solution exists for all but the coarsest node distributions.  We confirmed that the quadrature induced by the diagonal norm is order $2p$ for smooth functions.  We used the SBP operators to discretize the linear advection equation and confirmed the accuracy and stability of the schemes.  Our results also show that the point-cloud SBP operators are not as efficient as classical (tensor-product) finite-difference SBP operators.  We investigated how the primary components of the construction algorithm scale and found that the cut-cells dictate the computational cost of most components.

There are numerous aspects of the point-cloud SBP operators that remain to be investigated, but we limit our discussion to just a few.  Theoretically, we would like to see a more precise characterization of when the null space of $\mat{Z}^T$ is equal to the Vandermonde matrix $\mat{V}_{2p-1,\sdim}$.  This could have implications for constructing node distributions and stencils that ensure the norm inequality is always feasible.  Practically, we would like to use the operators on large-scale fluid-flow simulations, which will require some form of parallel implementation.  To reduce computational cost, we are interested in exploiting the flexibility of the point-cloud operators for solution adaptation and shape optimization.

Finally, given that the present work builds on the generalized DGD method~\cite{Yan2022thesis}, future work could combine point-cloud SBP operators with radial-basis function (RBF) discretizations.  For instance, RBFs could be incorporated into the accuracy conditions of the point-cloud SBP method.  This has the potential to extend the energy-stable, global RBF-SBP method~\cite{Glaubitz2024energy} by enabling a diagonal mass matrix and a sparse system matrix, \ie an energy-stable, local RBF-FD method.

\section*{Acknowledgements}

The first author was supported by the Office of Naval Research [grant number N00014-23-1-2698] with Dr. Mark Spector as the project manager.  This work was also partially supported by the National Science Foundation [grant numbers 1825991 and 1554253].  The authors gratefully acknowledge this support.

\appendix

\section{Theorem 4 Details}\label{app:detail}

Here we provide additional steps showing how $\mat{Q}_x \mat{V}$ can be written as the five sums in Equation~\eqref{eq:five_sums}.  First, we substitute the symmetric-skew-symmetric decomposition of $\mat{Q}_x$:
\begin{equation*}
  \mat{Q}_x \mat{V} = \Big( \mat{S}_x + \frac{1}{2} \mat{E}_x \Big) \mat{V}
\end{equation*}
Next, we substitute the global $\mat{S}_x$ and $\mat{E}_x$ definitions given by Equations~\eqref{eq:global_S} and \eqref{eq:global_E}, respectively.
\begin{multline}\label{eq:app_detail}
  \mat{Q}_x \mat{V} = \sum_{\cell=1}^{N_{C}} \big( \mat{P}^{\cell} \big)^T \; \mat{S}_x^{\cell} \; \mat{P}^{\cell} \; \mat{V} + \frac{1}{2} \sum_{\face \in F_I} \big( \mat{R}^{\face,-} \big)^{T} \; \mat{B}^{\face} \; \mat{N}_{x}^{\face} \; \mat{R}^{\face,+} \; \mat{V} \\
  - \frac{1}{2} \sum_{\face \in F_I} \big( \mat{R}^{\face,+} \big)^{T} \; \mat{B}^{\face} \; \mat{N}_{x}^{\face} \; \mat{R}^{\face,-} \; \mat{V} 
  + \frac{1}{2} \sum_{\face \in F_B} \big( \mat{R}^{\face} \big)^T \; \mat{B}^{\face} \; \mat{N}_{x}^{\face} \; \mat{R}^{\face} \; \mat{V}
\end{multline}
In the first sum above, the skew-symmetric operator for cell $\cell$ can be expressed as $\mat{S}_x^{\cell} = \mat{Q}_x^{\cell} - \frac{1}{2} \mat{E}_x^{\cell}$.  Using this identity, and the interpolated Vandermonde matrices defined in the Proof of Theorem~\ref{thm:skew_QR}, Equation~\eqref{eq:app_detail} becomes
\begin{multline*}
  \mat{Q}_x \mat{V} = \sum_{\cell=1}^{N_{C}} \big( \mat{P}^{\cell} \big)^T \; \mat{Q}_x^{\cell} \; \mat{V}^{\cell} - \frac{1}{2} \sum_{\cell=1}^{N_{C}} \big( \mat{P}^{\cell} \big)^T \; \mat{E}_x^{\cell} \; \mat{V}^{\cell} + \frac{1}{2} \sum_{\face \in F_I} \big( \mat{R}^{\face,-} \big)^{T} \; \mat{B}^{\face} \; \mat{N}_{x}^{\face} \; \mat{V}^{\face} \\
  - \frac{1}{2} \sum_{\face \in F_I} \big( \mat{R}^{\face,+} \big)^{T} \; \mat{B}^{\face} \; \mat{N}_{x}^{\face} \; \mat{V}^{\face}
  + \frac{1}{2} \sum_{\face \in F_B} \big( \mat{R}^{\face} \big)^T \; \mat{B}^{\face} \; \mat{N}_{x}^{\face} \; \mat{V}^{\face}.
\end{multline*}
The five sums on the right-hand side above are equal to the five sums on the right-hand side of \eqref{eq:five_sums}.

\bibliographystyle{elsarticle-num}
\bibliography{refs}

\end{document}